\newtheorem{thm}{Theorem}
\newtheorem{lem}[thm]{Lemma}
\newtheorem{cor}{Corollary}
\theoremstyle{remark}
\newtheorem{rem}{Remark}
\theoremstyle{definition}
\newtheorem{dfn}[thm]{Definition}
\newcommand{\vek}[1]{\boldsymbol{#1}}
\newcommand{\norm}[2]{\left\|\left.{#1}\right|{#2}\right\|}
\newcommand{\R}{\mathbb{R}}
\newcommand{\N}{\mathbb{N}}
\newcommand{\Z}{\mathbb{Z}}
\newcommand{\Sn}{\mathcal{S}(\R^n)}
\newcommand{\SSn}{\mathcal{S}'(\R^n)}
\newcommand{\supp}{{\rm supp\ }}
\newcommand{\ellqp}{{\ell_{q(\cdot)}(L_{p(\cdot)})}}
\newcommand{\ellpq}{{L_{p(\cdot)}(\ell_{q(\cdot)})}}
\newcommand{\p}{{p(\cdot)}}
\newcommand{\q}{{q(\cdot)}}
\newcommand{\s}{{s(\cdot)}}
\newcommand{\mgk}{\mathcal{W}^\alpha_{\alpha_1,\alpha_2}}
\newcommand{\Lpp}{L_\p(\R^n)}
\newcommand{\Bwpqpunkt}{B^{\vek{w}}_{\p,\q}(\R^n)}
\newcommand{\Fwpqpunkt}{F^{\vek{w}}_{\p,\q}(\R^n)}
\newcommand{\Bwpq}{B^{\vek{w}}_{p,q}(\R^n)}
\newcommand{\Fwpq}{F^{\vek{w}}_{p,q}(\R^n)}
\newcommand{\Bspqpunkt}{B^{s(\cdot)}_{\p,\q}(\R^n)}
\newcommand{\Fspqpunkt}{F^{s(\cdot)}_{\p,\q}(\R^n)}
\newcommand{\FT}{\mathcal{F}}
\newcommand{\IFT}{\mathcal{F}^{-1}}
\newcommand{\tah}{^\vee}
\renewcommand{\P}{\mathcal{P}(\R^n)}
\newcommand{\Plog}{\mathcal{P}^{\log}(\R^n)}
\newcommand{\esssup}{\operatornamewithlimits{ess-sup}}
\newcommand{\essinf}{\operatornamewithlimits{ess-inf}}
\title{Spaces of variable smoothness and integrability: Characterizations by local means and ball means of differences}
\author{Henning Kempka\footnote{Mathematical Institute, Friedrich-Schiller-University Jena, D--07737 Jena, Germany,
email: {\tt henning.kempka@uni-jena.de}},
Jan Vyb\'\i ral\footnote{Johann Radon Institute for Computational and
Applied Mathematics, Austrian Academy of Sciences, Altenbergerstrasse 69, A--4040 Linz, Austria,
email: {\tt  jan.vybiral@oeaw.ac.at}.}}
\begin{document}

\maketitle

\begin{abstract}
We study the spaces $B^{s(\cdot)}_{p(\cdot),q(\cdot)}(\R^n)$ and $F^{s(\cdot)}_{p(\cdot),q(\cdot)}(\R^n)$ of Besov and Triebel-Lizorkin 
type as introduced recently in \cite{AlmeidaHasto} and \cite{DHR}. Both scales cover many classical spaces with fixed exponents as well 
as function spaces of variable smoothness and function spaces of variable integrability.\\
The spaces $B^{s(\cdot)}_{p(\cdot),q(\cdot)}(\R^n)$ and $F^{s(\cdot)}_{p(\cdot),q(\cdot)}(\R^n)$  have been introduced in 
\cite{AlmeidaHasto} and \cite{DHR} by Fourier analytical tools, as the decomposition of unity. Surprisingly, our main result 
states that these spaces also allow a characterization in the time-domain with the help of classical ball means of differences.

To that end, we first prove a local means characterization for $B^{s(\cdot)}_{p(\cdot),q(\cdot)}(\R^n)$ with the help of the so-called
Peetre maximal functions. Our results do also hold for 2-microlocal function spaces $\Bwpqpunkt$ and $\Fwpqpunkt$ which are a slight
generalization of generalized smoothness spaces and spaces of variable smoothness.

\end{abstract}

\noindent{\bf Key words:} Besov spaces, Triebel-Lizorkin spaces, variable smoothness, variable integrability,
ball means of differences, Peetre maximal operator, 2-microlocal spaces.

\noindent{\bf 2010 MSC: }46E35, 46E30, 42B25.

\section{Introduction}

Function spaces of variable integrability appeared in a work by Orlicz \cite{Orlicz} already in 1931, but the recent interest in these
spaces is based on the paper of Kov\'a\v{c}ik and R\'akosnik \cite{KoRa} together with applications
in terms of modelling electrorheological fluids \cite{Ruz1}. A fundamental breakthrough concerning spaces of variable integrability was the observation that, under certain regularity assumptions on $\p$, the Hardy-Littlewood maximal operator is also bounded on $L_{\p}(\R^n)$, see \cite{Diening04}. This result has been generalized to wider classes of exponents $\p$ in \cite{CruzUribe03}, \cite{Nekvinda04} and \cite{DieningHHMS09}.\\
Besides electrorheological fluids, the spaces $L_{\p}(\R^n)$ possess interesting applications in the theory of PDE's, variational calculus, financial mathematics and image processing. A recent overview of this vastly growing field is given in \cite{DHHR}.\\
Sobolev and Besov spaces with variable smoothness but fixed integrability have been introduced in the late 60's and early 70's in the works of Unterberger \cite{Unterberger}, Vi\v{s}ik and Eskin \cite{VisikEskin}, Unterberger and Bokobza \cite{UnterbergerBok} and in the work of Beauzamy \cite{Beauzamy}. Leopold studied in \cite{Leopold91} Besov spaces where the smoothness is determined by a symbol $a(x,\xi)$ of a certain class of hypoelliptic pseudodifferential operators. In the special case $a(x,\xi)=(1+|\xi|^2)^{\sigma(x)/2}$ these spaces coincide with spaces of variable smoothness $B^{\sigma(x)}_{p,p}(\R^n)$.\\
A more general approach to spaces of variable smoothness are the so-called 2-microlocal function spaces $\Bwpq$ and $\Fwpq$. Here the smoothness in these spaces gets measured by a weight sequence $\vek{w}=(w_j)_{j=0}^\infty$. Besov spaces with such weight sequences appeared first in the works of Peetre \cite{PeetreArt} and Bony \cite{Bony}. Establishing a wavelet characterization for 2-microlocal H\"older-Zygmund spaces in \cite{Jaffard91} it turned out that 2-microlocal spaces are well adapted in connection to regularity properties of functions (\cite{JaffardMeyer96},\cite{Meyer97},\cite{VehelSeuret04}). Spaces of variable smoothness are a special case of 2-microlocal function spaces and in \cite{VehelSeuret03} and \cite{Bes1} characterizations by differences have been given for certain classes of them.\\\\
The theories of function spaces with fixed smoothness and variable integrability 
and function spaces with variable smoothness and fixed integrability finally crossed
each other in \cite{DHR}, where the authors introduced the function spaces of Triebel-Lizorkin type
with variable smoothness and simultaneously with variable integrability. 
It turned out that many of the spaces mentioned above
are really included in this new structure, see \cite{DHR} and references therein.
The key point to merge both lines of investigation was the study of traces. 
From Theorem 3.13 in \cite{DHR} 
\begin{align*}
    tr_{\R^{n-1}}F^{\s}_{\p,\q}(\R^n)=F^{\s-1/\p}_{\p,\p}(\R^{n-1})
\end{align*}
one immediately understands the necessity to take all exponents variable assuming $\p$ or $\s$ variable. So the trace embeddings may be described in a natural way in the context of these spaces.
Furthermore, this was complemented in \cite{Vyb1} by showing,
that the classical Sobolev embedding theorem
\begin{align*}
	F^{s_0(\cdot)}_{p_0(\cdot),q(\cdot)}(\R^n)\hookrightarrow F^{s_1(\cdot)}_{p_1(\cdot),q(\cdot)}(\R^n)
\end{align*}
 holds also in this scale of function spaces
if the usual condition is replaced by its point-wise analogue
\begin{align*}
	s_0(x)-n/p_0(x)=s_1(x)-n/p_1(x),\quad x\in\R^n.
\end{align*}
Finally, Almeida and H\"ast\"o managed in \cite{AlmeidaHasto} to adapt the definition of Besov spaces to
the setting of variable smoothness and integrability and proved the Sobolev and other usual embeddings in this scale.

The properties of Besov and Triebel-Lizorkin spaces of variable smoothness and integrability
known so far give a reasonable hope that these new scales of function spaces enjoy sufficiently
many properties to allow a local description of many effects, which up to now could only be described in a global way. 
Subsequently, for the spaces $\Fspqpunkt$ there is a characterization by local means given in \cite{Kempka09}. This characterization still works with Fourier analytical tools but the analyzing functions $k_0,k\in\Sn$ are compactly supported in the time-domain and we only need local values of $f$ around $x\in\R^n$ to calculate the building blocks $k(2^{-j},f)(x)$. This is in sharp contrast to the definition of the spaces by the decomposition of unity, cf. Definitions \ref{dfn_decomp} and \ref{def:BFpunkt}. For the spaces $\Bspqpunkt$ we will prove a local means assertion of this type in Section \ref{sec:localmeans} which will be helpful later on.\\
The main aim of this paper is to present another essential property of the function spaces from \cite{DHR}
and \cite{AlmeidaHasto}. We prove the surprising result that these spaces $\Bspqpunkt$ and $\Fspqpunkt$ with variable smoothness and integrability do also allow a characterization purely in the time-domain by classical ball means of differences.\\
The paper is organized as follows. First of all we provide all necessary notation in Section \ref{sec:notation}. Since the proofs for spaces of variable smoothness and 2-microlocal function spaces work very similar (see Remark \ref{rem:2mlsx}) we present our results for both scales. The proof 
for the local means characterization will be given in Section \ref{sec:localmeans} in terms of 2-microlocal function spaces and we present the version for spaces of variable smoothness in Section \ref{SecLocsx}. In Section 4 we prove the characterization by ball means of differences for $\Bspqpunkt$ and $\Fspqpunkt$ and the version for 2-microlocal function spaces will be given in Section \ref{sec:2mldiff}.

\section{Notation}\label{sec:notation}
In this section we collect all the necessary definitions.
We start with the variable Lebesgue spaces $\Lpp$.
A measurable function $p:\R^n\to(0,\infty]$ is called a
\emph{variable exponent function} if it is bounded away from zero, i.e. if $p^-=\essinf_{x\in \R^n}p(x)>0$.
We denote the set of all variable exponent functions by $\P$. We put also $p^+=\esssup_{x\in \R^n}p(x)$.

The variable exponent Lebesgue space $L_{p(\cdot)}(\R^n)$ consists of all measurable functions $f$ for which there exist $\lambda>0$ such that the modular
\begin{align*}
\varrho_{L_{p(\cdot)}(\R^n)}(f/\lambda)=\int_{\R^n}\varphi_{p(x)}\left(\frac{|f(x)|}{\lambda}\right)dx
\end{align*}
is finite, where
$$
\varphi_p(t)=\begin{cases}t^p&\text{if}\ p\in(0,\infty),\\
0&\text{if}\ p=\infty\ \text{and}\ t\le 1,\\
\infty&\text{if}\ p=\infty\ \text{and}\ t>1.
\end{cases}
$$
If we define $\R^n_\infty=\{x\in\R^n:p(x)=\infty\}$ and $\R^n_0=\R^n\setminus\R^n_\infty$, then the
Luxemburg norm of a function $f\in L_{p(\cdot)}(\R^n)$ is given by
\begin{align*}
 \norm{f}{L_{p(\cdot)}(\R^n)}&=\inf\{\lambda>0:\varrho_{L_{p(\cdot)}(\R^n)}(f/\lambda)\leq1\}\\
	&=\inf\left\{\lambda>0:\!\!\!\int_{\R^n_0}\left(\frac{f(x)}{\lambda}\right)^{p(x)}\!\!\!\!dx<1\text{ and }|f(x)|<\lambda\text{ for a.e. $x\in\R^n_\infty$}\right\}.
\end{align*}
If $p(\cdot)\geq1$, then it is a norm otherwise it is always a quasi-norm.

To define the mixed spaces $\ellqp$ we have to define another modular. For $p,q\in\P$ and a
sequence $(f_\nu)_{\nu\in\N_0}$ of $\Lpp$ functions we define
\begin{align}\label{Modular_ellqp}
\varrho_\ellqp(f_\nu)=\sum_{\nu=0}^\infty\inf\left\{\lambda_\nu>0:\varrho_\p\left(\frac{f_\nu}{\lambda_\nu^{1/q(\cdot)}}\right)\leq1\right\}.
\end{align}
If $q^+<\infty$, then we can replace \eqref{Modular_ellqp} by the simpler expression
\begin{align*}
\varrho_\ellqp(f_\nu)=\sum_{\nu}\norm{|f_\nu|^{\q}}{L_{\frac{\p}{\q}}}.
\end{align*}
The (quasi-)norm in the $\ellqp$ spaces is defined as usual by
\begin{align*}
\norm{f_\nu}{\ellqp}=\inf\{\mu>0:\varrho_\ellqp(f_\nu/\mu)\leq1\}.
\end{align*}
It is known, cf. \cite{AlmeidaHasto,KV11}, that $\ellqp$ is a norm if $\q\geq1$ is constant almost everywhere (a.e.) on $\R^n$ and $\p\geq1$,
or if $1/p(x)+1/q(x)\le 1$ a.e. on $\R^n$, or if $1\le q(x)\le p(x)\le \infty$ a.e. on $\R^n$.
Surprisingly enough, it turned out in \cite{KV11} that the condition $\min(p(x),q(x))\ge 1$ a.e. on $\R^n$ is not sufficient
for $\ellqp$ to be a norm. Nevertheless, it was proven in \cite{AlmeidaHasto} that it is a quasi-norm for every $p,q\in\P.$

For the sake of completeness, we state also the definition of the space $\ellpq$,
which is much more intuitive then the definition of $\ellqp$. One just takes the $\ell_{q(x)}$
norm of $(f_\nu(x))_{\nu\in\N_0}$ for every $x\in\R^n$ and then the $L_\p$-norm with respect to $x\in\R^n$, i.e.
$$
\norm{f_\nu}{\ellpq}=\big\|\norm{f_\nu(x)}{\ell_{q(x)}}|L_{\p}\big\|.
$$
It is easy to show (\cite{DHR}) that $\ellpq$ is always a quasi-normed space and it is a normed space, if $\min(p(x),q(x))\geq1$ holds point-wise.\\
The summation in the definition of the norms of $\ellqp$ and $\ellpq$ can also be taken for $\nu\in\Z$. 
It always comes out of the context over which interval the summation is taken.
Occasionally, we may indicate it by $\norm{(f_\nu)_{\nu=-\infty}^\infty}{\ellqp}$.\\
By $\hat{f}=\FT f$ and $f\tah=\IFT f$ we denote the usual Fourier transform and its inverse on $\Sn$, the Schwartz space of
smooth and rapidly decreasing functions, and on $\SSn$, the dual of the Schwartz space.

\subsection{Spaces $\Bspqpunkt$ and $\Fspqpunkt$}

The definition of Besov and Triebel-Lizorkin spaces of variable smoothness and integrability is based
on the technique of \emph{decomposition of unity} exactly in the same manner as in the case of constant
exponents.

\begin{dfn}\label{dfn_decomp}
Let $\varphi_0\in\Sn$ with $\varphi_0(x)=1$ for $|x|\leq1$ and
$\supp\varphi_0\subseteq\{x\in\R^n:|x|\leq2\}$. For $j\geq1$ we define
\begin{align*}
\varphi_j(x)=\varphi_0(2^{-j}x)-\varphi_0(2^{-j+1}x).
\end{align*}
\end{dfn}
One may verify easily that
$$
\sum_{j=0}^\infty \varphi_j(x)=1\quad\text{for all}\quad x\in\R^n.
$$
The following regularity classes for the exponents are necessary to make the definition of
the spaces independent on the chosen decomposition of unity.
\begin{dfn}
Let $g\in C(\R^n)$.

\begin{itemize}
\item[(i)] We say that $g$ is \emph{locally $\log$-H\"older continuous}, abbreviated $g\in C^{\log}_{loc}(\R^n)$,
if there exists $c_{\log}(g)>0$ such that
\begin{align}\label{eq:defclog}
|g(x)-g(y)|\leq\frac{c_{\log}(g)}{\log(e+{1}/{|x-y|})}
\end{align}
holds for all $x,y\in\R^n$.
\item[(ii)]
We say that $g$ is \emph{globally $\log$-H\"older continuous}, abbreviated $g\in C^{\log}(\R^n)$, if $g$ is
locally $\log$-H\"older continuous and there exists $g_\infty\in\R$ such that
\begin{align*}
|g(x)-g_\infty|\leq\frac{c_{\log}}{\log(e+|x|)}
\end{align*}
holds for all $x\in\R^n$.
\end{itemize}
\end{dfn}
\begin{rem}
With \eqref{eq:defclog} we obtain
\begin{align*}
|g(x)|\leq c_{\log}(g)+|g(0)|,\ \text{for all $x\in\R^n$.}
\end{align*}
This implies that all functions $g\in C^{\log}_{loc}(\R^n)$ always belong to $L_\infty(\R^n)$
\end{rem}
If an exponent $p\in\P$ satisfies $1/p\in C^{\log}(\R^n)$, then we say it belongs to the class $\Plog$.
We recall the definition of the spaces $\Bspqpunkt$ and $\Fspqpunkt$, as given in \cite{DHR} and \cite{AlmeidaHasto}.
\begin{dfn}\label{def:BFpunkt}
(i) Let $p,q\in \Plog$ with $0<p^-\leq p^+<\infty$, $0<q^-\leq q^+<\infty$ and let $s\in C^{\log}_{loc}(\R^n)$. Then
\begin{align*}
\Fspqpunkt&=\left\{f\in
\SSn:\norm{f}{\Fspqpunkt}_\varphi<\infty\right\}\text{, where}\\
\norm{f}{\Fspqpunkt}_\varphi&=\norm{2^{js(\cdot)}(\varphi_j\hat{f})\tah}{\ellpq}.
\end{align*}
(ii) Let $p,q\in \Plog$ and let $s\in C^{\log}_{loc}(\R^n)$. Then
\begin{align*}
\Bspqpunkt&=\left\{f\in
\SSn:\norm{f}{\Bspqpunkt}_\varphi<\infty\right\}\text{, where}\\
\norm{f}{\Bspqpunkt}_\varphi&=\norm{2^{js(\cdot)}(\varphi_j\hat{f})\tah}{\ellqp}.
\end{align*}
\end{dfn}
The subscript $\varphi$ at the norm symbolizes that the definition formally does depend on the resolution of unity.
From \cite{Kempka09} and \cite{AlmeidaHasto} we have that the definition of the spaces $\Fspqpunkt$ and
$\Bspqpunkt$ is independent of the chosen
resolution of unity if $p,q\in\Plog$ and $s\in C^{\log}_{loc}(\R^n)$. That means that different start functions $\varphi_0$ and $\tilde{\varphi}_0$ from Definition \ref{dfn_decomp} induce equivalent norms in the above definition. So we will suppress the subscript $\varphi$ in the notation of the norms.\\
Let us comment on the conditions on $p,q\in\Plog$ for the Triebel-Lizorkin spaces. The condition $0<p^-\leq p^+<\infty$ is quite natural since there exists also the restriction $p<\infty$ in the case of constant exponents, see \cite{Triebel1} and \cite{SYY}. The second one, $0<q^-\leq q^+<\infty$, is a bit unnatural and comes from the use of the convolution Lemma \ref{lem:ConvDHR} (\cite[Theorem 3.2]{DHR}). There is some hope that this convolution lemma can be generalized and the case $q^+=\infty$ can be incorporated in the definition of the $F$-spaces.\\

The Triebel-Lizorkin spaces with variable smoothness have first been introduced in \cite{DHR} under much more restrictive conditions on $\s$. These conditions have been relaxed in \cite{Kempka09} in the context of 2-microlocal function spaces (see the next subsection).\\
Besov spaces with variable $\p,\q$ and $\s$ have been introduced in \cite{AlmeidaHasto}.\\
Both scales contain as special cases a lot of well known function spaces. If $s,p$ and $q$ are constants, then we derive the well known Besov and Triebel-Lizorkin spaces with usual H\"older and Sobolev spaces included, see \cite{Triebel1} and \cite{Triebel2}. If the smoothness $s\in\R$ is a constant and $p\in\Plog$ with $p^->1$, then $F^s_{\p,2}(\R^n)=\mathcal{L}^s_\p(\R^n)$ are the variable Bessel potential spaces from \cite{AlmeidaSamko06} and \cite{GurkaHarjNek07} with its special cases $F^0_{\p,2}(\R^n)=L_\p(\R^n)$ and $F^k_{\p,2}(\R^n)=W^k_\p(\R^n)$ for $k\in\N_0$, see \cite{DHR}.\\
Taking $s\in\R$ and $q\in(0,\infty]$ as constants we derive the spaces $F^s_{\p,q}(\R^n)$ and $B^s_{\p,q}(\R^n)$ studied by Xu in \cite{Xu1} and \cite{Xu2}.\\
Furthermore it holds $F^\s_{\p,\p}(\R^n)=B^\s_{\p,\p}(\R^n)$ and $B^\s_{\infty,\infty}(\R^n)$ equals the variable H\"older-Zygmund space $\mathcal{C}^\s(\R^n)$ introduced in \cite{AlmeidaSamko07}, \cite{AlmeidaSamko09} and \cite{RossSamko95} with $0<s^-\leq s^+\leq1$, see \cite{AlmeidaHasto}.

\subsection{2-microlocal spaces}\label{sec:not2ml}
The definition of Besov and Triebel-Lizorkin spaces of variable smoothness and integrability
is a special case of the so-called 2-microlocal spaces of variable integrability.
As some of the results presented here get proved in this more general scale,
we present also the definition of 2-microlocal spaces. It is based on the dyadic decomposition of unity as presented
above combined with the concept of admissible weight sequences.

\begin{dfn}\label{dfn_zulGewichte}
Let $\alpha\geq0$ and let $\alpha_1,\alpha_2\in\R$ with $\alpha_1\leq\alpha_2$. A sequence of non-negative measurable
functions $\vek{w}=(w_j)_{j=0}^\infty$ belongs to the class
$\mgk$ if and only if
\begin{enumerate}
\item[(i)] there exists a constant $C>0$ such that
\begin{align*}
0<w_j(x)\leq C w_j(y)\left(1+2^{j}|x-y|\right)^{\alpha}\quad\text{for all $j\in\N_0$ and all $x,y\in\R^n$}
\end{align*}
\item[(ii)] and for all $j\in\N_0$ and all $x\in\R^n$ we have
\begin{align*}
2^{\alpha_1}w_j(x)\leq w_{j+1}(x)\leq 2^{\alpha_2}w_j(x).
\end{align*}
\end{enumerate}
Such a system $(w_j)_{j=0}^\infty\in\mgk$ is called an \emph{admissible weight sequence}.
\end{dfn}
Finally, here is the definition of the spaces under consideration.
\begin{dfn}\label{def:W_Spaces}
Let $\vek{w}=(w_j)_{j\in\N_0}\in\mgk$. Further, let $p,q\in\Plog$ (with $p^+,q^+<\infty$ in the $F$-case), then we define
\begin{align*}
\Bwpqpunkt&=\left\{f\in
\SSn:\norm{f}{\Bwpqpunkt}_\varphi<\infty\right\}\text{, where}\\
\norm{f}{\Bwpqpunkt}_\varphi&=\norm{w_j(\varphi_j\hat{f})\tah}{\ellqp}
\end{align*}
and
\begin{align*}
\Fwpqpunkt&=\left\{f\in
\SSn:\norm{f}{\Fwpqpunkt}_\varphi<\infty\right\}\text{, where}\\
\norm{f}{\Fwpqpunkt}_\varphi&=\norm{w_j(\varphi_j\hat{f})\tah}{\ellpq}.
\end{align*}
\end{dfn}
The independence of the decomposition of unity for the 2-microlocal spaces from Definition \ref{def:W_Spaces} follows from the local means characterization (see \cite{Kempka09} for the Triebel-Lizorkin and Section \ref{sec:localmeans} for the Besov spaces).\\
The 2-microlocal spaces with the special weight sequence
\begin{align}\label{tmlweights}
w_j(x)=2^{js}(1+2^j|x-x_0|)^{s'}\qquad\text{ with }s,s'\in\R\text{ and }x_0\in\R^n
\end{align}
have first been introduced by Peetre in \cite{PeetreArt} and by Bony in \cite{Bony}. Later on, Jaffard and Meyer gave a characterization in \cite{Jaffard91} and  \cite{JaffardMeyer96} with wavelets of the spaces $C^{s,s'}_{x_0}=B^{\vek{w}}_{\infty,\infty}(\R^n)$ and $H^{s,s'}_{x_0}=B^{\vek{w}}_{2,2}(\R^n)$ with the weight sequence \eqref{tmlweights}. It turned out that spaces of this type are very useful to study regularity properties of functions. Subsequently, L\'evy-V\'ehel and Seuret developed in \cite{VehelSeuret04} the 2-microlocal formalism and studied the behavior of cusps, chirps and fractal functions with respect to the spaces $C^{s,s'}_{x_0}$.\\
A first step to a more general weight sequence $\vek{w}$ has been taken by Moritoh and Yamada in \cite{Moritoh} and wider ranges of function spaces have been studied by Xu in \cite{HongXu} and by Andersson in \cite{Andersson}.\\
The above definition for 2-microlocal weight sequences was presented by Besov in \cite{Bes1} and also in \cite{Kempka09} by Kempka.\\
A different line of study for spaces of variable smoothness - using different methods - are the spaces of generalized smoothness introduced by Goldman and Kalyabin in \cite{Go79}, \cite{Go80}, \cite{Ka77a} and \cite{Ka80}. A systematic treatment of these spaces based on differences has been given by Goldman in \cite{Go84a}, see also the survey \cite{KaLi87} and references therein.\\
Later on, spaces of generalized smoothness appeared in interpolation theory and have been investigated in \cite{Me83}, \cite{CoFe86} and \cite{Mo01}. For further information on these spaces see the survey paper \cite{FarLeo} where also a characterization by atoms and local means for these spaces is given.\\ 
From the definition of admissible weight sequences, $d_1\sigma_j\leq\sigma_{j+1}\leq d_2\sigma_j$, it follows directly that the spaces of generalized smoothness $B^{(\sigma_j)}_{p,q}(\R^n)$ and $F^{(\sigma_j)}_{p,q}(\R^n)$ of Farkas and Leopold \cite{FarLeo} and $B^{(s,\Psi)}_{p,q}(\R^n)$ and $F^{(s,\Psi)}_{p,q}(\R^n)$ from Moura \cite{Mo01} are a special subclass of 2-microlocal function spaces with $2^{\alpha_1}=d_1$, $2^{\alpha_2}=d_2$ and $\alpha=0$.\\  
In a different approach Schneider in \cite{Schneider07} studied spaces of varying smoothness. Here the smoothness at a point gets determined by a global smoothness $s_0\in\R$ and a local smoothness function $\s$. These spaces can not be incorporated into the scale of 2-microlocal function spaces, but there exist some embeddings.
\begin{rem}\label{rem:2mlsx}
	Surprisingly, these 2-microlocal weight sequences are directly connected to variable smoothness functions $s:\R^n\to\R$ if we set
\begin{align}\label{tmlsx}
w_j(x)=2^{js(x)}.
\end{align}
If $s\in C^{\log}_{loc}(\R^n)$ (which is the standard condition on $\s$), then $\vek{w}=(w_j(x))_{j\in\N_0}=(2^{js(x)})_{j\in\N_0}$ belongs to $\mgk$ with $\alpha_1=s^-$ and $\alpha_2=s^+$. For the third index $\alpha$ we use Lemma \ref{Lem:eta1} with $m=0$ and obtain $\alpha=c_{\log}(s)$, where $c_{\log}(s)$ is the constant for $\s$ from \eqref{eq:defclog}. That means that spaces of variable smoothness from Definition \ref{def:BFpunkt} are a special case of 2-microlocal function spaces from Definition \ref{def:W_Spaces}. Both types of function spaces are very closely connected and the properties used in the proofs are either
\begin{align}\label{sxClogeigenschaft}
	2^{k|s(x)-s(y)|}\leq c\quad\text{or}\quad\frac{w_k(x)}{w_k(y)}\leq c
\end{align}
for $|x-y|\leq c2^{-k}$. This property follows directly either from the definition of $s\in C^{\log}_{loc}(\R^n)$ or from Definition \ref{dfn_zulGewichte}.\\
Nevertheless there exist examples of admissible weight sequences which can not be expressed in terms of variable smoothness functions. For example the important and well studied case of the weight sequence $\vek{w}$ from \eqref{tmlweights} can not be expressed via \eqref{tmlsx} if $s'\neq0$. Another example are the spaces of generalized smoothness which can not be identified as spaces of variable smoothness.
\end{rem}
Since spaces of variable smoothness are included in the scale of 2-microlocal function spaces all special cases of the previous subsection can be identified in the definition of 2-microlocal spaces.\\
 Although the 2-microlocal spaces include the scales of spaces of variable smoothness, we will give some of our proofs in the notation of variable smoothness, since this notation is more common. We will then reformulate the results in terms of 2-microlocal spaces, the proof works then very similar; we just have to use \eqref{sxClogeigenschaft}.

\section{Local means characterization}\label{sec:localmeans}
The main result of this section is the local means characterization of the spaces $\Bwpqpunkt$. For the spaces $\Fwpqpunkt$ there already exists a local means characterization (\cite[Corollary 4.7]{Kempka09}). We shall first give the full proof for the 2-microlocal spaces and later on (in Section \ref{SecLocsx})
we restate the result also for spaces $\Bspqpunkt$ and $\Fspqpunkt$.

The crucial tool will be the Peetre maximal operator, as defined by Peetre in \cite{PeetreArt}.
The operator assigns to each system
$(\Psi_k)_{k\in\N_0}\subset\Sn$, to each distribution $f\in\SSn$
and to each number $a>0$ the following quantities
\begin{align}\label{_PO_modifiziert}
(\Psi_k^*f)_a(x):=\sup_{y\in\R^n}\frac{|(\Psi_k\ast f)(y)|}{1+|2^k(y-x)|^{a}},\quad x\in\R^n
\text{ and }k\in\N_0.
\end{align}

We start with two given functions $\psi_0,\psi_1\in\Sn$. We define
\begin{align*}
\psi_j(x)=\psi_1(2^{-j+1}x),\quad\text{for $x\in\R^n$ and $j\in\N$.}
\end{align*}
Furthermore, for all $j\in\N_0$ we write $\Psi_j=\hat{\psi_j}$.
The main theorem of this section reads as follows.
\begin{thm}\label{Theorem:_LM_abstrakt}
Let $\vek{w}=(w_k)_{k\in\N_0}\in\mgk$, $p,q\in\Plog$ and let $a>0$, $R\in\N_0$ with $R>\alpha_2$.
Further, let $\psi_0,\psi_1$ belong to $\Sn$ with
\begin{align}
D^\beta\psi_1(0)=0,\quad\text{ for }0\leq|\beta|< R,\label{_LM_MomentCond}
\end{align}
and
\begin{align}
|\psi_0(x)|&>0\quad\text{on}\quad\{x\in\R^n:|x|<\varepsilon\}\label{_LM_Tauber1,5}\\
|\psi_1(x)|&>0\quad\text{on}\quad\{x\in\R^n:\varepsilon/2<|x|<2\varepsilon\}\label{_LM_Tauber2,5}
\end{align} 
for some $\varepsilon>0$. For $a>\frac{n+c_{log}(1/q)}{p^-}+\alpha$ and all $f\in\SSn$ we have
\begin{align*}
\norm{f}{\Bwpqpunkt}\approx\norm{(\Psi_k\ast f)w_k}{\ellqp}\approx\norm{(\Psi_k^*f)_aw_k}{\ellqp}.
\end{align*}
\end{thm}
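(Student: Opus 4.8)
The plan is to follow the classical three-step scheme for local means characterizations (as in Rychkov's work and its adaptations to variable exponents), adapted here to the mixed-sequence space $\ellqp$ with a $2$-microlocal weight. Write $\norm{f}{\Bwpqpunkt}$ for the Fourier-analytic norm built from the decomposition of unity $(\varphi_j)$, and denote the three quantities by $A_1 = \norm{f}{\Bwpqpunkt}$, $A_2 = \norm{(\Psi_k\ast f)w_k}{\ellqp}$, $A_3 = \norm{(\Psi_k^*f)_aw_k}{\ellqp}$. The chain of estimates I would establish is $A_1 \lesssim A_3$, then $A_3 \lesssim A_2$ (trivial direction is $A_2 \le A_3$ since $|(\Psi_k\ast f)(x)| \le (\Psi_k^*f)_a(x)$), and finally $A_3 \lesssim A_1$, which closes the loop. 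The first and third are the substantive ones.

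\textbf{Step 1: the key pointwise convolution inequality.} The heart of the matter is a Peetre-type estimate: given two admissible pairs of analyzing functions, say $(\varphi_0,\varphi_1)$ with the decomposition-of-unity support properties and $(\psi_0,\psi_1)$ with the Tauberian conditions \eqref{_LM_Tauber1,5}, \eqref{_LM_Tauber2,5} and moment conditions \eqref{_LM_MomentCond}, one constructs (via the Tauberian hypothesis, by a standard Fourier-analytic division argument) functions $\lambda_0,\lambda_1\in\Sn$ with $D^\beta\lambda_1(0)=0$ for $|\beta|<R$ such that $\sum_{k} \lambda_k \ast \psi_k = \delta$ in $\SSn$, and hence $\Phi_j\ast f = \sum_{k\ge 0}\Phi_j\ast\lambda_k\ast\psi_k\ast f$ where $\Phi_j = \hat\varphi_j$. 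The moment conditions on the high-frequency pieces give the decay $|(\Phi_j\ast\lambda_k)(z)| \lesssim 2^{-|j-k|R}\,2^{\min(j,k)n}(1+2^{\min(j,k)}|z|)^{-M}$ for any $M$. Convolving and taking the Peetre maximal function, one arrives at
\begin{align*}
(\Psi_j^*f)_a(x) \lesssim \sum_{k\ge 0} 2^{-|j-k|(R-a)}\,2^{[\min(j,k)-j]\,n/t}\,\Bigl(\text{suitable avg of }|(\Psi_k\ast f)|^t\Bigr)^{1/t}(x)
\end{align*}
for an auxiliary exponent $0<t\le p^-$ (one needs $a$ large and $M$ large relative to $n/t$); the term $2^{-|j-k|(R-a)}$ with $R>\alpha_2$ is what will beat the weight ratio below. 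This is precisely the computation underlying \cite[Theorem 3.2]{DHR} territory, and the estimate must be phrased with the $t$-th power trick so that the maximal operator of \cite{DHR} (the convolution Lemma \ref{lem:ConvDHR}) can be applied.

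\textbf{Step 2: absorbing the weight and summing.} Using the $2$-microlocal conditions from Definition \ref{dfn_zulGewichte}, $w_j(x) \le C\,w_k(x)\,2^{|j-k|\alpha_2}$ and $w_j(x)\le C\,w_j(y)(1+2^j|x-y|)^\alpha$, one transfers the weight $w_j$ at scale $j$ to the weight $w_k$ inside the convolution at scale $k$, at the cost of a factor $2^{|j-k|\alpha_2}$ and an extra polynomial factor $(1+2^{\min(j,k)}|x-y|)^{\alpha}$ which is harmless provided $a$ is chosen $> n/p^- + \alpha$ plus the $\log$-H\"older correction $c_{\log}(1/q)/p^-$ — this is exactly the stated lower bound on $a$. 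Then one has, pointwise,
\begin{align*}
w_j(x)(\Psi_j^*f)_a(x) \lesssim \sum_{k\ge 0} 2^{-|j-k|\delta}\,\eta_{k,m}\ast\bigl(w_k|\Psi_k\ast f|^t\bigr)^{1/t}(x)
\end{align*}
with $\delta = R-\alpha_2-a > 0$ after re-choosing constants, where $\eta_{k,m}(x)=2^{kn}(1+2^k|x|)^{-m}$. Taking $\ellqp$-(quasi-)norms, raising to a small power so the quasi-norm becomes subadditive, applying the convolution Lemma \ref{lem:ConvDHR} (with the $\log$-H\"older assumptions $p,q\in\Plog$ ensuring the maximal operator acts boundedly on the relevant modular), and finally summing the geometric series in $|j-k|$ using a discrete convolution / Young's inequality in the $\ell_{q(\cdot)}$-direction (here one invokes the boundedness of convolution with a geometrically decaying sequence on $\ell_{q(\cdot)}$, valid since $q\in\Plog$), yields $A_3 \lesssim A_2$. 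Exactly the same scheme with the roles of $(\varphi)$ and $(\psi)$ swapped, noting that the decomposition-of-unity functions automatically satisfy infinitely many moment conditions, gives $A_1 \lesssim A_3$ and $A_3\lesssim A_1$; combined with the trivial $A_2\le A_3$, all three quantities are equivalent.

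\textbf{Main obstacle.} The routine parts are the Fourier-analytic construction of $\lambda_0,\lambda_1$ and the kernel decay estimates. The genuinely delicate point is handling the $\ellqp$ quasi-norm: unlike the classical $\ell_q(L_p)$ case one cannot freely use the triangle inequality, the norm is only a quasi-norm, and one must be careful that the convolution Lemma \ref{lem:ConvDHR} of \cite{DHR} is only available for $q^+<\infty$ — but here we are in the $B$-case where the outer space is $\ell_{q(\cdot)}$ and the inner is $L_{p(\cdot)}$, so one applies the maximal inequality on $L_{p(\cdot)}$ scale-by-scale and then does the summation over scales in $\ell_{q(\cdot)}$ separately, which is why the theorem for $B$-spaces does \emph{not} need $q^+<\infty$. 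Keeping track of the interplay between the exponent $t$ (chosen $\le p^-$), the decay rate $\delta$, the bound on $a$, and the $\log$-H\"older constants $c_{\log}(1/q)$, $c_{\log}(s)$ entering through $\alpha$ is the part that requires the most care; this is where the precise hypothesis $a > \frac{n+c_{\log}(1/q)}{p^-}+\alpha$ and $R>\alpha_2$ get used in an essential, non-negotiable way.
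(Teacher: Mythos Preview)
Your overall architecture is right---Rychkov's scheme split into a comparison-of-maximal-operators step and a boundedness-of-maximal-operator step---and matches the paper's Theorems~\ref{Theorem:Comp_PO_s} and~\ref{Theorem:_Bo_PMO}. But several concrete points are either wrong or missing, and without them the argument does not close.

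\emph{First, the decay exponent and your $\delta$ are incorrect.} In your Step~1 display you write a factor $2^{-|j-k|(R-a)}$ and in Step~2 you set $\delta=R-\alpha_2-a$. Since $a$ must be taken large (indeed $a>\frac{n+c_{\log}(1/q)}{p^-}+\alpha$) while $R$ is only required to exceed $\alpha_2$, this $\delta$ is typically negative and the geometric sum diverges. The paper avoids this by separating the two steps cleanly: in the comparison step (Theorem~\ref{Theorem:Comp_PO_s}) one compares $(\Psi_j^*f)_a$ to $(\Phi_k^*f)_a$---Peetre maximal on both sides---so the extra $2^{(k-j)a}$ coming from rescaling the Peetre denominator is absorbed by the \emph{infinitely many} moments of $\lambda_1$ (not by $R$), and one ends with $\delta=\min(1,R-\alpha_2)>0$. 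The finite $R$ only controls the $k\le j$ side.

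\emph{Second, the boundedness step needs Rychkov's iteration lemma, which you do not invoke.} To pass from $(\Psi_\nu^*f)_a$ to integrals of $|\Psi_{k+\nu}\ast f|^r$ one first obtains an \emph{implicit} inequality with $(\Psi_{k+\nu}^*f)_a^{1-r}$ still on the right (the paper's display preceding \eqref{_Bo__PO_4}), and then applies Lemma~\ref{_LM_Lemma3} (Rychkov's Lemma~3) to bootstrap to the explicit inequality \eqref{_Bo__PO_4}. Your Step~1 display asserts the explicit inequality directly, but there is no way to get it without this iteration; this is the substantive analytic content of the boundedness step.

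\emph{Third, you are citing the wrong convolution lemma.} Lemma~\ref{lem:ConvDHR} is the $L_{p(\cdot)}(\ell_{q(\cdot)})$ result for the $F$-scale and indeed requires $q^+<\infty$. For $\ellqp$ the relevant tool is Lemma~\ref{Lemma_4}, which needs $m>n+c_{\log}(1/q)$; this is precisely why the constant $c_{\log}(1/q)$ appears in the hypothesis on $a$. Your remark that in the $B$-case ``one applies the maximal inequality on $L_{p(\cdot)}$ scale-by-scale and then does the summation over scales in $\ell_{q(\cdot)}$ separately'' is incorrect for variable $q$: the modular \eqref{Modular_ellqp} does not factor as an outer $\ell_q$-sum of $L_{p(\cdot)}$-norms, so there is no scale-by-scale reduction, and the Hardy--Littlewood maximal operator is \emph{not} bounded on $\ellqp$ in general (cf.\ \cite[Example 4.1]{AlmeidaHasto}). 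The $\eta$-convolution Lemma~\ref{Lemma_4} is the genuine substitute, and the geometric summation in $j-k$ is handled by the abstract Lemma~\ref{_LM_Lemma2}, not by any Young-type inequality on $\ell_{q(\cdot)}$.
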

\begin{rem}
\begin{enumerate}
\item[(i)] The proof relies on \cite{Rychkov} and will be shifted to the next section. Moreover,
Theorem \ref{Theorem:_LM_abstrakt} shows that the definition of the 2-microlocal spaces of variable integrability
is independent of the resolution of unity used in the Definition \ref{def:W_Spaces}.
\item[(ii)] The conditions \eqref{_LM_MomentCond} are usually called \emph{moment conditions} while
\eqref{_LM_Tauber1,5} and \eqref{_LM_Tauber2,5} are the so called \emph{Tauberian conditions}.
\item[(iii)] If $R=0$, then there are no moment conditions \eqref{_LM_MomentCond} on $\psi_1$.
\item[(iv)] The notation $c_{log}(1/q)$ stands for the constant from \eqref{eq:defclog} with $1/\q$. 
\end{enumerate}
\end{rem}
Next we reformulate the abstract Theorem \ref{Theorem:_LM_abstrakt}
in the sense of classical local means (see Sections 2.4.6 and 2.5.3 in \cite{Triebel2}). Since the proof is the same as the one from Theorem 2.4 in \cite{Kempka09} we just state the result. 
\begin{cor}
There exist functions $k_0,k\in\Sn$ with $\supp k_0,\supp k\subset\{x\in\R^n:|x|<1\}$ and $D^\beta\hat{k}(0)=0$ for all $0\leq|\beta|<\alpha_2$ such that for all $f\in\SSn$
\begin{align*}
\norm{k_0(1,f)w_0}{\Lpp}+\norm{k(2^{-j},f)w_j}{\ellqp}
\end{align*}
is an equivalent norm on $\Bwpqpunkt$.\\
The building blocks get calculated by
\begin{align*}
k(t,f)(x)=\int_{\R^n}k(y)f(x+ty)dy=t^{-n}\int_{\R^n}k\left(\frac{y-x}{t}\right)f(y)dy
\end{align*}
and similarly for $k_0(1,f)(x)$.
\end{cor}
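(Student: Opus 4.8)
The plan is to deduce this corollary from the abstract Theorem~\ref{Theorem:_LM_abstrakt} by making a careful choice of the functions $\psi_0,\psi_1$ and then renaming/rescaling. First I would fix $R=\lceil\alpha_2\rceil$ (or any integer $R>\alpha_2$) and invoke the classical construction — as in Section~2.4.6 of \cite{Triebel2} or in the proof of Theorem~2.4 in \cite{Kempka09} — producing $k_0,k\in\Sn$, both supported in the unit ball, with $k$ satisfying $D^\beta\hat k(0)=0$ for $0\le|\beta|<R$: one takes a smooth bump $k_0$ supported in $\{|x|<1\}$ that does not vanish near the origin of the frequency side in the sense of \eqref{_LM_Tauber1,5}, and then sets $k=\Delta^N k_0$ (an iterated Laplacian, $N$ large) so that $\hat k(\xi)=(-|\xi|^2)^N\hat{k_0}(\xi)$ has a zero of order $2N\ge R$ at the origin and is nonzero on an annulus, giving \eqref{_LM_Tauber2,5}. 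Since $k=\Delta^N k_0$ is again supported in the unit ball, the support condition on $k$ is automatic.

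Next I would set $\psi_0=\hat{k_0}$ and $\psi_1=\hat k$, so that $\Psi_0=\hat{\psi_0}=k_0(-\cdot)$ and, more generally, $\Psi_j=\hat{\psi_j}$, where $\psi_j(x)=\psi_1(2^{-j+1}x)$. A direct computation with the Fourier transform shows that convolution against $\Psi_j$ is exactly the local-means operator at scale $2^{-j}$: one checks that $(\Psi_j\ast f)(x)$ coincides with $k(2^{-j},f)(x)=2^{jn}\int_{\R^n}k(2^{j}(y-x))f(y)\,dy$ up to harmless normalising constants and the reflection $y\mapsto -y$, and likewise $(\Psi_0\ast f)(x)=k_0(1,f)(x)$. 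The moment, Tauberian and support hypotheses of Theorem~\ref{Theorem:_LM_abstrakt} are then all met (with this $R$, hence $D^\beta\hat k(0)=0$ for $0\le|\beta|<R$, and in particular for $0\le|\beta|<\alpha_2$ as claimed), so the theorem yields
\begin{align*}
\norm{f}{\Bwpqpunkt}\approx\norm{(\Psi_k\ast f)w_k}{\ellqp}\approx\norm{k_0(1,f)w_0}{\Lpp}+\norm{k(2^{-j},f)w_j}{\ellqp},
\end{align*}
where in the last step I split off the $k=0$ term (which is just an $\Lpp$-norm weighted by $w_0$) from the sum over $k\ge 1$ and absorb the normalising constants into the equivalence.

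The bookkeeping points I expect to need care with are: (a) making sure the parameter $a$ in the Peetre-maximal part of Theorem~\ref{Theorem:_LM_abstrakt} can be chosen admissibly, i.e. $a>\frac{n+c_{\log}(1/q)}{p^-}+\alpha$ — but this is a condition on the space, not on $k_0,k$, so it is freely available; (b) verifying that the rescaling $\psi_j(x)=\psi_1(2^{-j+1}x)$ built into the statement of the theorem matches the dyadic dilations $k(2^{-j},f)$ after passing to the Fourier side, which is where the factor $2^{-j+1}$ versus $2^{-j}$ must be reconciled (a shift of the index by one, absorbed into constants); and (c) checking that $\supp k_0,\supp k\subset\{|x|<1\}$ survives the construction $k=\Delta^Nk_0$, which it does since differentiation does not enlarge supports. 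The genuinely substantive input is entirely contained in Theorem~\ref{Theorem:_LM_abstrakt}; the main obstacle here is purely the explicit construction of a pair $(k_0,k)$ with compact support and the prescribed vanishing moments and non-vanishing (Tauberian) behaviour, which is standard and identical to the constant-exponent case treated in \cite{Triebel2} and \cite{Kempka09}, so I would state it and refer there rather than reproduce it.
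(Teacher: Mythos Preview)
Your proposal is correct and is essentially the same approach the paper takes: the paper does not give a separate proof but simply remarks that the argument is identical to that of Theorem~2.4 in \cite{Kempka09}, which is precisely the iterated-Laplacian construction $k=\Delta^N k_0$ you describe, followed by the application of Theorem~\ref{Theorem:_LM_abstrakt}. Your bookkeeping points (a)--(c), including the index shift in (b), are exactly the minor reconciliations needed, and you correctly identify that the only substantive input is the abstract Theorem~\ref{Theorem:_LM_abstrakt}.
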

A similar characterization for $\Fwpqpunkt$ and details how these functions $k_0,k\in\Sn$ can be constructed can be found in \cite{Kempka09}.

\subsection{Proof of local means}
The proof of Theorem \ref{Theorem:_LM_abstrakt} is divided into three parts. The next section
is devoted to some technical lemmas needed later. Section \ref{Sec:Comp} is devoted to the proof
of Theorem \ref{Theorem:Comp_PO_s}, which gives an inequality between different Peetre maximal operators. 
Finally, Section \ref{Sec:Bound} proves the boundedness of the Peetre maximal operator in Theorem \ref{Theorem:_Bo_PMO}.
These two theorems combined give immediately the proof of Theorem \ref{Theorem:_LM_abstrakt}.

\subsubsection{Helpful lemmas}
Before proving the local means characterization we recall some technical lemmas, which appeared in the paper of
Rychkov \cite{Rychkov}. For some of them we need adapted versions to our situation.\\
The first lemma describes the use of the so called moment conditions.

\begin{lem}[\cite{Rychkov}, Lemma 1]\label{_LM_Lemma1}
Let $g,h\in\Sn$ and let $M\in \N_0$. Suppose that
\begin{align*}
(D^\beta\hat{g})(0)=0\quad\text{for}\quad0\leq|\beta|<M.
\end{align*}
Then for each $N\in\N_0$ there is a constant $C_N$ such that
\begin{align*}
\sup_{z\in\R^n}|(g_t\ast h)(z)|(1+|z|^N)\leq C_N t^{M},\quad\text{for}\quad 0<t<1,
\end{align*}
where $g_t(x)=t^{-n}g(x/t)$.
\end{lem}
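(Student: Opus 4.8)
## Proof proposal for Lemma \ref{_LM_Lemma1}

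The plan is to reduce everything to a Taylor expansion of $\hat g$ near the origin, transported to the convolution via the Fourier transform. The starting observation is that the vanishing of $D^\beta\hat g(0)$ for $|\beta|<M$ means $\hat g$ has a zero of order $M$ at $0$, so by Taylor's theorem with remainder $|\hat g(\xi)|\lesssim |\xi|^M$ near $0$; combined with the Schwartz decay of $\hat g$ this gives $|\hat g(\xi)|\le C_K |\xi|^M (1+|\xi|)^{-K}$ for every $K$, uniformly on $\R^n$. Rescaling, $\widehat{g_t}(\xi)=\hat g(t\xi)$, hence $|\widehat{g_t}(\xi)|\le C_K t^M |\xi|^M (1+t|\xi|)^{-K}\le C_K t^M |\xi|^M(1+|\xi|)^{-K'}$ type bounds will be available for $0<t<1$ after bookkeeping; the factor $t^M$ is exactly what we want to extract.

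Next I would write $(g_t\ast h)(z)$ via the inverse Fourier transform as $(2\pi)^{-n}\int \widehat{g_t}(\xi)\hat h(\xi) e^{iz\xi}\,d\xi$ and aim to control $\sup_z (1+|z|^N)|(g_t\ast h)(z)|$. The factor $(1+|z|^N)$ is handled in the usual way: $|z|^N e^{iz\xi}$ is, up to constants, a sum of derivatives $D^\alpha_\xi e^{iz\xi}$ with $|\alpha|\le N$, so after integration by parts $(1+|z|^N)|(g_t\ast h)(z)|$ is bounded by a finite sum of integrals $\int |D^\alpha_\xi(\widehat{g_t}(\xi)\hat h(\xi))|\,d\xi$. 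Now I distribute the derivatives by the Leibniz rule. Each derivative hitting $\widehat{g_t}(\xi)=\hat g(t\xi)$ produces a factor $t$ and a derivative of $\hat g$ evaluated at $t\xi$; since $\hat g\in\Sn$ and still vanishes to order $M-|\beta|$ at the origin when differentiated $|\beta|$ times (for $|\beta|\le M$), one keeps enough powers to bound $|D^\beta\hat g(t\xi)|\lesssim \min(1,|t\xi|)^{\max(M-|\beta|,0)}\cdot(1+|t\xi|)^{-L}$. Multiplying by the $t^{|\beta|}$ from the chain rule and using $0<t<1$, every term carries at least $t^M$: indeed if $|\beta|\ge M$ the explicit powers of $t$ already give $t^M$, and if $|\beta|<M$ the remaining vanishing order contributes $|t\xi|^{M-|\beta|}$, i.e. another $t^{M-|\beta|}$ together with $|\xi|^{M-|\beta|}$ which is absorbed by the Schwartz decay of $\hat h$ and of $\hat g$. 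The surviving $\xi$-integral converges because both $\hat g$ and $\hat h$ are rapidly decreasing, yielding a finite constant $C_N$ depending on $N$ (and on $g,h,M$) but not on $t$.

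The main obstacle — really the only delicate point — is the bookkeeping at the origin: one must simultaneously exploit the zero of order $M$ of $\hat g$ at $0$ and the decay at infinity, uniformly in the scaling parameter $t\in(0,1)$, and check that after applying Leibniz to $D^\alpha_\xi(\hat g(t\xi)\hat h(\xi))$ \emph{every} resulting term genuinely carries the full factor $t^M$. The clean way to organize this is to prove once and for all the pointwise bound
\begin{align*}
|D^\beta\hat g(\eta)|\le C_{\beta,K}\,(1+|\eta|)^{-K}\min(1,|\eta|)^{(M-|\beta|)_+},\qquad \eta\in\R^n,
\end{align*}
valid for all multi-indices $\beta$ and all $K\in\N_0$ (Taylor at $0$ plus Schwartz decay), and then substitute $\eta=t\xi$, $0<t<1$, so that $\min(1,|t\xi|)^{(M-|\beta|)_+}\le t^{(M-|\beta|)_+}|\xi|^{(M-|\beta|)_+}$ and the chain-rule factor $t^{|\beta|}$ combine to at least $t^{M}$ in every case, while the polynomial growth $|\xi|^{(M-|\beta|)_+}$ is swallowed by the rapid decay of $\hat h$ (and the remaining factor of $\hat g$). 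After that the argument is a routine estimate of an absolutely convergent integral, and one reads off $C_N$. I would present it in this order: (1) the pointwise bound on $D^\beta\hat g$; (2) the integration-by-parts identity for $(1+|z|^N)(g_t\ast h)(z)$; (3) Leibniz expansion and the term-by-term extraction of $t^M$; (4) conclude by convergence of the $\xi$-integral.
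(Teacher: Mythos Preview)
The paper does not give its own proof of this lemma; it simply quotes the result from \cite{Rychkov}. Your argument is correct and is the standard one: Fourier inversion, integration by parts to trade the weight $(1+|z|^N)$ for $\xi$-derivatives, Leibniz expansion, and the pointwise bound $|D^\beta\hat g(\eta)|\le C_{\beta,K}(1+|\eta|)^{-K}\min(1,|\eta|)^{(M-|\beta|)_+}$ coming from Taylor's theorem at the origin combined with Schwartz decay. The key inequality $\min(1,|t\xi|)^{(M-|\beta|)_+}\le (t|\xi|)^{(M-|\beta|)_+}$ is valid for all $\xi$ (trivially so when $|t\xi|>1$), so together with the chain-rule factor $t^{|\beta|}$ and $0<t<1$ every Leibniz term indeed carries $t^{M}$, and the leftover polynomial growth in $|\xi|$ is absorbed by the rapid decay of the derivatives of $\hat h$. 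This is, up to presentation, the argument in Rychkov's paper as well, so there is nothing to compare.
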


The next lemma is a discrete convolution inequality. We formulate it in a rather abstract notation and point out later on the conclusions we need.
\begin{lem}\label{lem:convolutionSeq}
Let $X\subset\{(f_k)_{k\in\Z}:f_k:\R^n\to[-\infty,\infty]\text{ measurable}\}$ be a quasi-Banach space of sequences of 
measurable functions. Further we assume that its quasi-norm is shift-invariant, i.e. it satisfies
\begin{align*}
\norm{(f_{k+l})_{k\in\Z}}{X}=\norm{(f_{k})_{k\in\Z}}{X}\qquad \text{for every $l\in\Z$ and $(f_k)_{k\in\Z}\in X$.}
\end{align*} 
For a sequence of non-negative functions $(g_k)_{k\in\Z}\in X$ and $\delta>0$ we denote 
\begin{align*}
 G_\nu(x)=\sum_{k=-\infty}^\infty2^{-|\nu-k|\delta}g_k(x),\qquad x\in\R^n,\,\nu\in\Z.
\end{align*}
Then there exists a constant $c>0$ depending only on $\delta$ and $X$ such that for every sequence $(g_k)_{k\in\Z}$
\begin{align*}
\norm{(G_\nu)_\nu}{X}\leq c\norm{(g_k)_k}{X}.
\end{align*}
\end{lem}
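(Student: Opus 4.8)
The plan is to split the convolution sum $G_\nu = \sum_{k} 2^{-|\nu-k|\delta} g_k$ into two halves according to whether $k\le\nu$ or $k>\nu$, and to control each half by reindexing so that the shift-invariance of the quasi-norm on $X$ can be used. Concretely, write $G_\nu^{(1)}(x)=\sum_{k\le\nu}2^{-(\nu-k)\delta}g_k(x)$ and $G_\nu^{(2)}(x)=\sum_{k>\nu}2^{-(k-\nu)\delta}g_k(x)$, so that $G_\nu\le G_\nu^{(1)}+G_\nu^{(2)}$ pointwise and, by the quasi-triangle inequality in $X$, it suffices to bound $\norm{(G_\nu^{(1)})_\nu}{X}$ and $\norm{(G_\nu^{(2)})_\nu}{X}$ separately by $c\norm{(g_k)_k}{X}$. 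For the first half, substitute $k=\nu-m$ with $m\ge 0$, giving $G_\nu^{(1)}=\sum_{m\ge 0}2^{-m\delta}g_{\nu-m}$; for the second half substitute $k=\nu+m$ with $m\ge 1$, giving $G_\nu^{(2)}=\sum_{m\ge 1}2^{-m\delta}g_{\nu+m}$.

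Next I would invoke a generalized (quasi-normed) Minkowski-type inequality for the space $X$: since $X$ is a quasi-Banach space, there is an exponent $r\in(0,1]$ with $\norm{\sum_i h^{(i)}}{X}^r\le \sum_i \norm{h^{(i)}}{X}^r$ (this is the Aoki--Rolewicz theorem, a standard fact about quasi-Banach spaces; if one wishes to avoid it, one may simply assume $X$ is $r$-normed, which is the only property actually used and holds in all the concrete cases $\ellqp$, $\ellpq$ of interest). Applying this with $h^{(m)}=(2^{-m\delta}g_{\cdot-m})$ for the first half yields
\begin{align*}
\norm{(G_\nu^{(1)})_\nu}{X}^r\le \sum_{m\ge 0}2^{-m\delta r}\norm{(g_{\nu-m})_\nu}{X}^r.
\end{align*}
Here the shift-invariance hypothesis gives $\norm{(g_{\nu-m})_\nu}{X}=\norm{(g_k)_k}{X}$ for every fixed $m$, so the right-hand side equals $\norm{(g_k)_k}{X}^r\sum_{m\ge0}2^{-m\delta r}$, and since $\delta>0$ and $r>0$ the geometric series converges to a finite constant $c_1=c_1(\delta,r)$. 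The same argument handles $G_\nu^{(2)}$, with the sum running over $m\ge1$. Combining the two estimates through the quasi-triangle inequality one more time produces the claimed bound with $c=c(\delta,X)$.

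The main (and really the only) subtlety is the passage from the pointwise bound on $G_\nu$ to a norm bound, i.e. justifying the interchange of the infinite sum with the $X$-quasi-norm. In a genuine Banach space this is Minkowski's integral inequality, but in a mere quasi-Banach space the triangle inequality fails and one must use the $r$-norm/Aoki--Rolewicz substitute as above; everything else is bookkeeping with the geometric series and the shift-invariance. One should also note that $G_\nu$ could a priori be $+\infty$ on a null set, but this causes no problem since the estimate shows the right-hand side is finite whenever $(g_k)_k\in X$, hence $G_\nu$ is finite a.e. and lies in $X$.
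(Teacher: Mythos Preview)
Your proposal is correct and follows essentially the same approach as the paper: invoke Aoki--Rolewicz to pass to an equivalent $r$-norm, reindex the convolution sum, use shift-invariance, and sum the resulting geometric series. The only cosmetic difference is that the paper does not split into the two halves $G_\nu^{(1)}$ and $G_\nu^{(2)}$; it simply substitutes $l=k-\nu$ and sums over $l\in\Z$ in one step, which makes the extra quasi-triangle inequality at the start unnecessary.
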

\begin{proof}
Since $X$ is a quasi-Banach space, there exists a $r>0$ such that $\norm{\cdot}{X}$ is equivalent to some $r$-norm, cf. \cite{Aoki, Rol}. 
We have then the following
\begin{align*}
\norm{(G_\nu)_\nu}{X}^r&=\norm{\left(\sum_{k=-\infty}^\infty2^{-|\nu-k|\delta}g_k\right)_\nu}{X}^r=\norm{\left(\sum_{l\in\Z}2^{-|l|\delta}g_{\nu+l}\right)_\nu}{X}^r\\
&\lesssim\sum_{l\in\Z}2^{-|l|r\delta}\norm{(g_{\nu+l})_\nu}{X}^r\leq c\norm{(g_\nu)_\nu}{X}^r.
\end{align*}
Now taking the power $1/r$ yields the desired estimate.
\end{proof}
The spaces $\ellpq$ and $\ellqp$ are quasi-Banach spaces which fulfill the conditions of Lemma \ref{lem:convolutionSeq}.
Therefore, we obtain the following
\begin{lem}\label{_LM_Lemma2}
Let $p,q\in\P$ and $\delta>0$. Let $(g_k)_{k\in\Z}$ be a sequence of non-negative measurable functions on $\R^n$ and denote
\begin{align*}
 G_\nu(x)=\sum_{k\in\Z}2^{-|\nu-k|\delta}g_k(x),\qquad x\in\R^n,\,\nu\in\Z.
\end{align*}
Then there exist a constants $C_1,C_2>0$, depending on $\p,\q$ and $\delta$, such that
\begin{align*}
\norm{G_\nu}{\ellqp}&\leq C_1\norm{g_k}{\ellqp}\quad\text{and}\\
\norm{G_\nu}{\ellpq}&\leq C_2\norm{g_k}{\ellpq}.
\end{align*}
\end{lem}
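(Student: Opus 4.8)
The plan is to deduce Lemma \ref{_LM_Lemma2} directly from the abstract convolution inequality of Lemma \ref{lem:convolutionSeq}, so the only work is to verify that the two sequence spaces $\ellqp$ and $\ellpq$ satisfy the hypotheses of that lemma, namely that each is a quasi-Banach space of (equivalence classes of) measurable sequences whose quasi-norm is shift-invariant in the index $\nu$. Concretely, first I would recall from Section \ref{sec:notation} that for every $p,q\in\P$ both $\ellqp$ (by \cite{AlmeidaHasto}) and $\ellpq$ (by \cite{DHR}) are quasi-normed spaces, and that completeness is known for both; this gives the quasi-Banach property. The shift-invariance is immediate from the defining modulars: in \eqref{Modular_ellqp} the sum $\sum_{\nu}$ runs over all of $\N_0$ or, as remarked in the text, all of $\Z$, and replacing $f_\nu$ by $f_{\nu+l}$ merely reindexes this sum; the same reindexing works for the $\ell_{q(x)}$-norm inside $\norm{\cdot}{\ellpq}$. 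Hence both quasi-norms are unchanged under the shift $(f_\nu)_\nu\mapsto(f_{\nu+l})_\nu$.

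With these two checks in place, the statement follows: apply Lemma \ref{lem:convolutionSeq} once with $X=\ellqp$ to obtain the constant $C_1$, and once with $X=\ellpq$ to obtain $C_2$. Since the constant produced by Lemma \ref{lem:convolutionSeq} depends only on $\delta$ and on $X$ (through its quasi-norm, equivalently through the exponent $r$ of an equivalent $r$-norm and the quasi-triangle constant), and these in turn depend only on $\p,\q$ and $\delta$, we get exactly the asserted dependence of $C_1,C_2$. The non-negativity assumption on $(g_k)$ is inherited verbatim, and the definition of $G_\nu$ here is the special case of the one in Lemma \ref{lem:convolutionSeq} with $X$ chosen as above.

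I do not expect any genuine obstacle here; the lemma is essentially a corollary. The one point that deserves a sentence of care is the passage to an equivalent $r$-norm used inside the proof of Lemma \ref{lem:convolutionSeq}: one should note that the Aoki--Rolewicz theorem \cite{Aoki, Rol} applies because $\ellqp$ and $\ellpq$ are complete quasi-normed spaces, so that the geometric-series manipulation $\norm{\sum_l 2^{-|l|\delta}g_{\nu+l}}{X}^r\lesssim\sum_l 2^{-|l|r\delta}\norm{g_{\nu+l}}{X}^r$ is legitimate and the remaining sum $\sum_{l\in\Z}2^{-|l|r\delta}$ converges. Beyond that, the proof is a one-line invocation, and I would present it as such.
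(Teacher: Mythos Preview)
Your proposal is correct and matches the paper's own argument exactly: the paper states that $\ellqp$ and $\ellpq$ are quasi-Banach spaces satisfying the hypotheses of Lemma \ref{lem:convolutionSeq}, and then records Lemma \ref{_LM_Lemma2} as an immediate consequence. One small remark: the Aoki--Rolewicz theorem does not require completeness, only a quasi-norm, so you need not worry about verifying completeness for this step.
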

\begin{rem}
Of course, Lemma \ref{_LM_Lemma2} holds true also if the indices $k$ and $\nu$ run only over natural numbers.
\end{rem}
Since the maximal operator is in general not bounded on $\ellqp$ (see \cite[Example 4.1]{AlmeidaHasto}) we need a replacement for that. 
It turned out that a convolution with radial decreasing functions fits very well into the scheme. A careful evaluation of the proof in \cite[Lemma 4.7]{AlmeidaHasto} together with Lemma \ref{Lem:eta1} gives us the following convolution inequality. 
\begin{lem}\label{Lemma_4}
Let $p,q\in\Plog$ with $\p\geq1$ and let $\eta_{\nu,m}(x)=2^{n\nu}(1+2^\nu|x|)^{-m}$. 
For all $m>n+c_{log}(1/q)$ there exists a constant $c>0$ such that for all sequences $(f_j)_{j\in\N_0}\in\ellqp$ it holds
\begin{align*}
\norm{(\eta_{\nu,m}\ast f_\nu)_{\nu\in\N_0}}{\ellqp}\leq c\norm{(f_j)_{j\in\N_0}}{\ellqp}.
\end{align*}
\end{lem}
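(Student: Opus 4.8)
The plan is to argue through the modular $\varrho_{\ellqp}$. Since the asserted inequality is homogeneous, it suffices to produce a constant $C$ with the property that $\varrho_{\ellqp}\big((f_j)_j\big)\le 1$ forces $\varrho_{\ellqp}\big((\eta_{\nu,m}\ast f_\nu)_\nu\big)\le C$; the passage from this modular estimate to the quasi-norm inequality is routine. So fix a sequence of unit $\ellqp$-modular and, for each $\nu$, pick $a_\nu>0$ with $\sum_\nu a_\nu\le 2$ and $\varrho_{\p}\big(f_\nu/a_\nu^{1/\q}\big)\le 1$; equivalently, $f_\nu=a_\nu^{1/\q}\,\tilde f_\nu$ with $\varrho_{\p}(\tilde f_\nu)\le 1$ for every $\nu$. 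The bookkeeping device I would use is to test $\eta_{\nu,m}\ast f_\nu$ not against $a_\nu^{1/\q}$ but against $\lambda_\nu^{1/\q}$, where $\lambda_\nu:=C_0\,(a_\nu+2^{-\nu})$ for a large constant $C_0$ fixed later. Since $\sum_\nu\lambda_\nu\le 4C_0$ and $\lambda_\nu$ is an admissible value in the infimum defining the inner part of $\varrho_{\ellqp}$, it is then enough to verify, for each single $\nu$, that $\varrho_{\p}\big((\eta_{\nu,m}\ast f_\nu)/\lambda_\nu^{1/\q}\big)\le 1$.

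The reason for perturbing $a_\nu$ by $2^{-\nu}$ is that now $\lambda_\nu\ge C_0\,2^{-\nu}$, so the weight obeys $\lambda_\nu^{-1/q(x)}\lesssim C_0^{-1/q^+}\,2^{\nu/q(x)}$; that is, it is dominated by one of the form $2^{\nu g(x)}$ with $g=1/\q\in C^{\log}(\R^n)$. For such weights Lemma~\ref{Lem:eta1} lets one transport the exponent from $x$ to the convolution variable $y$ at the price of lowering the decay of the kernel by $c_{\log}(1/q)$ orders,
\begin{align*}
 2^{\nu g(x)}\,\eta_{\nu,m}(x-y)\ \lesssim\ 2^{\nu g(y)}\,\eta_{\nu,\,m-c_{\log}(1/q)}(x-y).
\end{align*}
Plugging this into $(\eta_{\nu,m}\ast f_\nu)(x)\,\lambda_\nu^{-1/q(x)}=\int_{\R^n}\lambda_\nu^{-1/q(x)}\,a_\nu^{1/q(y)}\,\eta_{\nu,m}(x-y)\,\tilde f_\nu(y)\,dy$, and using $a_\nu/\lambda_\nu\le 1/C_0$ to bound the residual factor $(a_\nu/\lambda_\nu)^{1/q(y)}\le C_0^{-1/q^+}$, I obtain the pointwise estimate
\begin{align*}
 \big|(\eta_{\nu,m}\ast f_\nu)(x)\big|\,\lambda_\nu^{-1/q(x)}\ \lesssim\ C_0^{-1/q^+}\,\big(\eta_{\nu,m'}\ast|\tilde f_\nu|\big)(x),\qquad m':=m-c_{\log}(1/q)>n .
\end{align*}
It is precisely the requirement $m'>n$ that produces the threshold $m>n+c_{\log}(1/q)$: the surplus $n$ keeps $\eta_{\nu,m'}$ essentially $L_1$-normalised uniformly in $\nu$, while $c_{\log}(1/q)$ is spent moving the weight. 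What remains is the scale-uniform estimate $\varrho_{\p}(\tilde f_\nu)\le 1\ \Rightarrow\ \norm{\eta_{\nu,m'}\ast\tilde f_\nu}{L_{\p}}\le C_2$ — the classical single-scale convolution inequality on $L_{\p}$, cf.\ \cite{DHR,AlmeidaHasto}, proved by Jensen's inequality (here $\p\ge 1$ is used), Fubini, and the standard pointwise exponent-shift replacing $|\tilde f_\nu(y)|^{p(x)}$ by $|\tilde f_\nu(y)|^{p(y)}$ up to an integrable error, which uses $\p\in C^{\log}$. Choosing $C_0$ so large that $C_0^{-1/q^+}C_2\le 1$ gives $\norm{(\eta_{\nu,m}\ast f_\nu)/\lambda_\nu^{1/\q}}{L_{\p}}\le 1$, hence $\varrho_{\p}\big((\eta_{\nu,m}\ast f_\nu)/\lambda_\nu^{1/\q}\big)\le 1$, and summing over $\nu$ closes the argument.

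The main obstacle is the interplay between the \emph{outer}, variable exponent $\q$ in $\ellqp$ and the spatial convolution. A naive scale-by-scale estimate fails, because the natural per-scale weight $a_\nu^{-1/q(x)}$ need not be comparable to anything of the form $2^{\nu g(x)}$ (the $a_\nu$ may decay much faster than $2^{-\nu}$), so the log-H\"older/kernel-trading mechanism of Lemma~\ref{Lem:eta1} does not apply to it directly. One also cannot dominate $\eta_{\nu,m}\ast|f_\nu|$ by the Hardy--Littlewood maximal function, since the latter is unbounded on $\ellqp$ (\cite[Example~4.1]{AlmeidaHasto}); the convolution structure must be retained throughout. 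The harmless enlargement $a_\nu\rightsquigarrow a_\nu+2^{-\nu}$ — admissible because $\sum_\nu 2^{-\nu}<\infty$ — is exactly what pushes the weight into the regime where the variable-$\q$ difficulty can be absorbed into $c_{\log}(1/q)$ lost orders of decay. The verification of the exponent-shift for $\p$, and the treatment of the portion of the $L_{\p}$-modular supported on $\{p=\infty\}$ (resp.\ the minor modifications needed when $q^+=\infty$), are then routine.
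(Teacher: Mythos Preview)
Your approach is precisely the one the paper invokes: the authors do not give a proof but point to \cite[Lemma~4.7]{AlmeidaHasto} read in combination with Lemma~\ref{Lem:eta1}, and that is exactly the modular argument with the $a_\nu\rightsquigarrow a_\nu+2^{-\nu}$ perturbation you describe. So strategically there is nothing to compare.

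There is, however, one slip in the execution of the transport step. You first bound $\lambda_\nu^{-1/q(x)}\le C_0^{-1/q^+}2^{\nu/q(x)}$, apply Lemma~\ref{Lem:eta1} to $2^{\nu/q(x)}$, and then claim the leftover factor is $(a_\nu/\lambda_\nu)^{1/q(y)}$. But following your chain literally the leftover is $(2^\nu a_\nu)^{1/q(y)}$, which is uncontrolled (take $a_\nu\approx 1$). The point is that the crude bound throws away the information that $\lambda_\nu$ may also be large, and you cannot recover $a_\nu/\lambda_\nu$ afterwards. The fix is to split the $C_0$ \emph{first}: write $\lambda_\nu=C_0\mu_\nu$ with $\mu_\nu:=a_\nu+2^{-\nu}\in[2^{-\nu},3]$, use $C_0^{-1/q(x)}\le C_0^{-1/q^+}$, and transport $\mu_\nu^{-1/q(x)}$ to $\mu_\nu^{-1/q(y)}$. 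The latter is a mild extension of Lemma~\ref{Lem:eta1}: since $|\log_2\mu_\nu|\le\nu+2$, one has $\mu_\nu^{1/q(y)-1/q(x)}\lesssim(1+2^\nu|x-y|)^{c_{\log}(1/q)}$ with a constant independent of $\nu$ and of $C_0$. Then the residual factor is $(a_\nu/\mu_\nu)^{1/q(y)}\le 1$, the $C_0^{-1/q^+}$ survives untouched, and your displayed pointwise estimate and the rest of the argument go through as written.
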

The last technical lemma is overtaken literally from \cite{Rychkov}.
\begin{lem}[\cite{Rychkov}, Lemma 3]\label{_LM_Lemma3}
Let $0<r\leq1$ and let $(\gamma_\nu)_{\nu\in\N_0}$,
$(\beta_\nu)_{\nu\in\N_0}$ be two sequences taking values in  $(0,\infty)$. Assume that for some $N^0\in\N_0$,
\begin{align}\label{_LM_Lemma3N_0}
\limsup_{\nu\to\infty}\frac{\gamma_\nu}{2^{\nu N^0}}<\infty.
\end{align}
Furthermore, we assume that for any $N\in\N$
\begin{align*}
\gamma_\nu\leq C_N\sum_{k=0}^\infty2^{-kN}\beta_{k+\nu}\gamma_{k+\nu}^{1-r},\quad \nu\in\N_0,\quad C_N<\infty
\end{align*}
holds, then for any $N\in\N$
\begin{align*}
\gamma_\nu^r\leq C_N\sum_{k=0}^\infty2^{-kNr}\beta_{k+\nu},\quad \nu\in\N_0
\end{align*}
holds with the same constants $C_N$.
\end{lem}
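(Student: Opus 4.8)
The plan is to read the recursive inequality in the hypothesis as a \emph{self-improving} estimate for the sequence $(\gamma_\nu)$ and to iterate it infinitely often: at each round one trades a factor $2^{\nu\cdot(\text{bad exponent})}$ against a higher power of the target quantity $\sum_k 2^{-kNr}\beta_{k+\nu}$, and in the limit the bad exponent dies out. First I would dispose of the trivial case $r=1$, where hypothesis and conclusion literally coincide, and then fix $N\in\N$ with $0<r<1$. Writing $P_\nu:=\sum_{k=0}^\infty 2^{-kNr}\beta_{k+\nu}$, I may assume $P_\nu<\infty$ for every $\nu$, since otherwise the asserted inequality holds trivially there; directly from this definition one records the sub-geometric bounds
\begin{align*}
\beta_{\nu+l}\le 2^{lNr}P_\nu\qquad\text{and}\qquad P_{\nu+l}\le 2^{lNr}P_\nu,\qquad l\in\N_0.
\end{align*}
Finally, \eqref{_LM_Lemma3N_0} supplies a constant $A>0$ with $\gamma_\nu\le A\,2^{\nu N^0}$ for all $\nu\in\N_0$ (the finitely many indices below the $\limsup$-threshold are absorbed into $A$); this polynomial bound is the seed of the iteration.

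The heart of the proof is an induction on $i\in\N_0$ showing $\gamma_\nu\le C_i\,P_\nu^{a_i}\,2^{\nu\mu_i}$ for all $\nu$, where $a_0=0$, $\mu_0=N^0$, $C_0=A$ and, with a \emph{fixed} integer parameter $\tilde N:=N+N^0+1$,
\begin{align*}
a_{i+1}=(1-r)a_i+1,\qquad \mu_{i+1}=(1-r)\mu_i,\qquad C_{i+1}=2C_{\tilde N}\,C_i^{1-r}.
\end{align*}
The base case is the seed bound. For the step, I would insert the inductive bound for $\gamma_{k+\nu}$ and the sub-geometric bound for $P_{k+\nu}$ into the hypothesis applied with the parameter $\tilde N$, use the elementary inequality $\bigl(\sum_k b_k\bigr)^{1-r}\le\sum_k b_k^{1-r}$ (valid since $0<1-r<1$) together with the sub-geometric bound for $\beta_{k+\nu}$. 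What is left is a geometric series in $k$ with ratio $2^{-\tilde N+Nr\,a_{i+1}+(1-r)\mu_i}$; since $a_{i+1}\le 1/r$ and $\mu_i\le N^0$, this exponent is at most $-\tilde N+N+N^0=-1$, so the series is bounded by $2$ \emph{uniformly in $i$}, and the next step of the induction follows.

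It then remains to let $i\to\infty$ with $\nu$ fixed. One has $\mu_i=N^0(1-r)^i\to 0$, $a_i=\tfrac{1-(1-r)^i}{r}\uparrow\tfrac1r$, and, since $0<1-r<1$, the recursion for $C_i$ converges to the finite fixed point $C_\infty=(2C_{\tilde N})^{1/r}$. Passing to the limit in the inductive inequality yields $\gamma_\nu\le C_\infty\,P_\nu^{1/r}$, that is $\gamma_\nu^{\,r}\le C_\infty^{\,r}\sum_k 2^{-kNr}\beta_{k+\nu}$, with a constant depending only on $N$ and $N^0$ — which is the claim.

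The step I expect to be the main obstacle is the bookkeeping of the constants across infinitely many iterations: one must arrange that the auxiliary geometric series occurring at the $i$-th step has ratio bounded away from $1$ \emph{uniformly in $i$}, and this is exactly what the a priori bounds $a_i\le 1/r$, $\mu_i\le N^0$ and the (sufficiently large, but fixed) choice of $\tilde N$ ensure; granted this, the recursion $C_{i+1}=2C_{\tilde N}C_i^{1-r}$ has exponent $1-r<1$ and hence stays bounded. Everything else is routine rearrangement of non-negative series.
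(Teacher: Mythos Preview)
The paper does not give its own proof of this lemma; it is taken over from Rychkov. Your iterative bootstrapping is essentially sound and does produce a bound of the form $\gamma_\nu^{\,r}\le c\sum_{k}2^{-kNr}\beta_{k+\nu}$, but it does \emph{not} prove the lemma as stated: the assertion is that the conclusion holds with the \emph{same} constant $C_N$ appearing in the hypothesis, while your limit gives $C_\infty^{\,r}=2C_{\tilde N}$ with $\tilde N=N+N^0+1$. Your closing sentence (``a constant depending only on $N$ and $N^0$ --- which is the claim'') misreads the statement; the constant you obtain depends on the datum $C_{\tilde N}$, and in any case the claimed conclusion is sharper. For the only application in this paper --- the proof of Theorem~\ref{Theorem:_Bo_PMO} --- any finite constant suffices, so your version would do the job there; but as a proof of the lemma itself it falls short.

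The argument that recovers the exact constant is a one-step supremum trick rather than an infinite iteration. Assuming $P_\nu:=\sum_{k}2^{-kNr}\beta_{k+\nu}<\infty$, set $G_\nu:=\sup_{l\ge0}2^{-lN}\gamma_{\nu+l}$; this is finite when $N>N^0$ by \eqref{_LM_Lemma3N_0}, and for general $N$ after a preliminary application of the case $N'>\max(N,N^0)$ combined with $P_{\nu+l}\le 2^{lNr}P_\nu$. Inserting $\gamma_{k+\nu}^{1-r}\le(2^{kN}G_\nu)^{1-r}$ into the hypothesis \emph{with the given $N$} yields $\gamma_\nu\le C_N\,G_\nu^{1-r}P_\nu$; replacing $\nu$ by $\nu+l$ and using $G_{\nu+l}\le 2^{lN}G_\nu$, $P_{\nu+l}\le 2^{lNr}P_\nu$ gives the same bound for $2^{-lN}\gamma_{\nu+l}$, hence $G_\nu\le C_N\,G_\nu^{1-r}P_\nu$, i.e.\ $\gamma_\nu^{\,r}\le G_\nu^{\,r}\le C_N P_\nu$ with exactly the stated constant.

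A minor remark on the write-up: the inequality $\bigl(\sum_k b_k\bigr)^{1-r}\le\sum_k b_k^{1-r}$ plays no role in your inductive step --- once you insert $P_{k+\nu}\le 2^{kNr}P_\nu$ and $\beta_{k+\nu}\le 2^{kNr}P_\nu$, the remaining sum is already a pure geometric series in $k$, and your uniform ratio bound $2^{-1}$ is then correct.
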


\subsubsection{Comparison of different Peetre maximal operators}\label{Sec:Comp}
In this subsection we present an inequality between different
Peetre maximal operators. Let us recall the notation given before Theorem \ref{Theorem:_LM_abstrakt}.
For two given functions $\psi_0,\psi_1\in\Sn$ we define
\begin{align*}
\psi_j(x)=\psi_1(2^{-j+1}x),\quad\text{for $x\in\R^n$ and $j\in\N$.}
\end{align*}
Furthermore, for all $j\in\N_0$ we write $\Psi_j=\hat{\psi_j}$ and
in an analogous manner we define $\Phi_j$ from two starting functions $\phi_0,\phi_1\in\Sn$.
Using this notation we are ready to formulate the theorem.

\begin{thm}\label{Theorem:Comp_PO_s}
Let $\vek{w}=(w_j)_{j\in\N_0}\in\mgk$, $p,q\in\P$ and $a>0$. Moreover, let $R\in\N_0$ with $R>\alpha_2$,
\begin{align}
D^\beta\psi_1(0)=0,\quad0\leq|\beta|<R\label{Comp_PO_sMomentCond}
\end{align}
and for some $\varepsilon>0$
\begin{align}
|\phi_0(x)|&>0\quad\text{on}\quad\{x\in\R^n:|x|<\varepsilon\}\label{_LM_Tauber1}\\
|\phi_1(x)|&>0\quad\text{on}\quad\{x\in\R^n:\varepsilon/2<|x|<2\varepsilon\}\label{_LM_Tauber2},
\end{align}
then
\begin{align*}
\norm{(\Psi_k^*f)_a w_k}{\ellqp}\leq c\norm{(\Phi_k^*f)_a w_k}{\ellqp}
\end{align*}
holds for every $f\in\SSn$.
\end{thm}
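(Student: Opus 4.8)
The plan is to follow the classical scheme of Rychkov \cite{Rychkov}, adapted to the mixed variable-exponent norm $\ellqp$. The starting point is a reproducing (resolution-of-unity-type) identity: using the Tauberian conditions \eqref{_LM_Tauber1} and \eqref{_LM_Tauber2} on $\phi_0,\phi_1$, one constructs auxiliary functions $\lambda_0,\lambda_1\in\Sn$ with support properties matching those of $\phi_0,\phi_1$ so that
\begin{align*}
\sum_{k=0}^\infty \hat{\lambda}_k(x)\hat{\phi}_k(x)=1\quad\text{for all }x\in\R^n,
\end{align*}
where $\lambda_k,\phi_k$ are the dilates defined as in the statement. This yields $f=\sum_{k}\lambda_k\ast\phi_k\ast f$ in $\SSn$, and hence, convolving with $\psi_j$,
\begin{align*}
(\psi_j\ast f)(y)=\sum_{k=0}^\infty(\psi_j\ast\lambda_k\ast\phi_k\ast f)(y).
\end{align*}
The moment conditions \eqref{Comp_PO_sMomentCond} on $\psi_1$ (and the support of $\lambda_k$) are exactly what is needed to apply Lemma \ref{_LM_Lemma1}: the kernel $\psi_j\ast\lambda_k$ decays rapidly and, crucially, gains a factor $2^{-|j-k|R}$ (more precisely $2^{-(k-j)R}$ when $k\ge j$, and a similar gain from $\phi$'s side — or from rescaling — when $k<j$). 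This is the heart of the matter.

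The next step is the pointwise estimate. For $k\ge j$ one writes
\begin{align*}
|(\psi_j\ast\lambda_k\ast\phi_k\ast f)(y)|\le \int_{\R^n}|(\psi_j\ast\lambda_k)(z)|\,|(\phi_k\ast f)(y-z)|\,dz,
\end{align*}
bounds $|(\phi_k\ast f)(y-z)|\le (\Phi_k^*f)_a(x)\,(1+|2^k(y-z-x)|^a)$ by definition of the Peetre maximal function, splits $(1+|2^k(y-z-x)|^a)\lesssim (1+|2^k(y-x)|^a)(1+|2^kz|^a)$, and uses the decay of $\psi_j\ast\lambda_k$ from Lemma \ref{_LM_Lemma1} to control $\int(1+|2^kz|^a)|(\psi_j\ast\lambda_k)(z)|dz\lesssim 2^{-(k-j)R}\cdot$(const). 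Dividing by $(1+|2^j(y-x)|^a)$ and taking the supremum over $y$, and handling the term $(1+|2^k(y-x)|^a)/(1+|2^j(y-x)|^a)\lesssim 2^{(k-j)a}$, one arrives at
\begin{align*}
(\Psi_j^*f)_a(x)\le c\sum_{k=0}^\infty 2^{-|j-k|N}(\Phi_k^*f)_a(x)
\end{align*}
for an arbitrarily large $N$ (the balancing of $R$ against $a$, $\alpha_2$ requires $R>\alpha_2$ and $N$ chosen after $a$). The case $k<j$ is symmetric, using the fast decay of $\psi_j\ast\lambda_k$ directly without moment gain since there $2^{-(j-k)}$ smallness is already built in.

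Finally one multiplies by $w_j(x)$ and uses the admissibility of $\vek{w}$: by Definition \ref{dfn_zulGewichte}(ii), $w_j(x)\le 2^{|j-k|\max(|\alpha_1|,|\alpha_2|)}w_k(x)$ — more carefully, $w_j(x)\lesssim 2^{(k-j)\alpha_1}w_k(x)$ is not quite right, so one uses $w_j\le 2^{-(k-j)\alpha_1}w_k$ for $k\ge j$ and $w_j\le 2^{(j-k)\alpha_2}w_k$ for $k<j$; in either case the loss is at most $2^{|j-k|\delta'}$ with $\delta'=\max(|\alpha_1|,|\alpha_2|,\alpha_2)$. Choosing $N>\delta'$ gives
\begin{align*}
w_j(x)(\Psi_j^*f)_a(x)\le c\sum_{k=0}^\infty 2^{-|j-k|\delta}\,w_k(x)(\Phi_k^*f)_a(x)
\end{align*}
with $\delta=N-\delta'>0$. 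Now Lemma \ref{_LM_Lemma2} applied to the sequence $g_k=w_k(\Phi_k^*f)_a\in\ellqp$ finishes the proof, since $\ellqp$ satisfies the hypotheses of Lemma \ref{lem:convolutionSeq}. The main obstacle is the bookkeeping in the pointwise convolution estimate — correctly tracking the interplay of the three exponents $a$, $R$, $\alpha_2$ (and the $\alpha$ from the weight's first admissibility condition, which enters when one moves the Peetre weight from $y$ to $x$) so that after all losses a genuinely positive $\delta$ survives; everything downstream of that is a routine application of the lemmas already in hand.
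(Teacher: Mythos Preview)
Your approach is exactly the paper's --- the reproducing formula from the Tauberian conditions, the kernel estimate via Lemma \ref{_LM_Lemma1}, the pointwise Peetre-function manipulation, the weight comparison from Definition \ref{dfn_zulGewichte}(ii), and the final appeal to Lemma \ref{_LM_Lemma2}. However, you have the two cases interchanged, and one of your claims is actually false.

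The moment condition \eqref{Comp_PO_sMomentCond} on $\psi_1$ yields the gain in the regime $k<j$ (not $k\ge j$): after rescaling, $I_{j,k}=\int |(\Psi_j\ast\Lambda_k)(z)|(1+|2^kz|^a)\,dz$ becomes $\int |(\Psi_{j-k}\ast\Lambda_1(\cdot/2))(z)|(1+|z|^a)\,dz$, and Lemma \ref{_LM_Lemma1} applied with the $R$ vanishing moments of $\Psi_1$ gives $\lesssim 2^{-(j-k)R}$. Conversely, for $k>j$ the gain comes from $\lambda_1$, which --- being supported on an annulus --- has \emph{infinitely many} vanishing moments, so $I_{j,k}\lesssim 2^{-(k-j)M}$ for arbitrary $M$; this is the side on which one can later absorb both the $2^{(k-j)a}$ loss from the Peetre weight and the weight-sequence loss for free. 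Your sentence ``the case $k<j$ is symmetric, using the fast decay \dots\ without moment gain since there $2^{-(j-k)}$ smallness is already built in'' is incorrect: for $k<j$ nothing is small automatically, and the $R$ moments of $\psi_1$ are precisely what produces the decay there. After multiplying by $w_j(x)\le 2^{(j-k)\alpha_2}w_k(x)$ this direction yields only $2^{-(j-k)(R-\alpha_2)}$, which is why $R>\alpha_2$ is needed and why your $N$ cannot be taken arbitrarily large --- the final exponent is $\delta=\min(1,R-\alpha_2)$, not a free parameter. (Also, the parameter $\alpha$ from Definition \ref{dfn_zulGewichte}(i) plays no role in this proof; only $\alpha_1,\alpha_2$ enter.) With the cases put back in the right order, your sketch becomes the paper's argument verbatim.
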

\begin{rem}
	Observe that there are no restrictions on $a>0$ and $p,q\in\P$ in the theorem above.
\end{rem}
\begin{proof}
We have the fixed resolution of unity from Definition \ref{dfn_decomp} and define the 
sequence of functions $(\lambda_j)_{j\in\N_0}$ by
\begin{align*}
\lambda_j(x)=\frac{\varphi_j\left(\frac{2x}{\varepsilon}\right)}{\phi_j(x)}.
\end{align*}
It follows from the Tauberian conditions \eqref{_LM_Tauber1} and \eqref{_LM_Tauber2} that they satisfy
\begin{equation}
\sum_{j=0}^\infty\lambda_j(x)\phi_j(x)=1,\quad x\in\R^n,\label{Comp_PO_s1}
\end{equation}
\begin{equation}
\lambda_j(x)=\lambda_1(2^{-j+1}x),\quad x\in\R^n,\quad j\in \N,\label{Comp_PO_s1,2}\
\end{equation}
and
\begin{equation}
\supp\lambda_0\subset\{x\in\R^n:|x|\leq\varepsilon\}\quad\text{and}\quad
\supp\lambda_1\subset\{x\in\R^n:\varepsilon/2\leq|x|\leq2\varepsilon\}.\label{Comp_PO_s1,5}
\end{equation}
Furthermore, we denote $\Lambda_k=\hat{\lambda_k}$ for
$k\in\N_0$ and obtain together with \eqref{Comp_PO_s1} the following
identities (convergence in $\SSn$)
\begin{align}\label{Comp_PO_s2,5}
f=\sum_{k=0}^\infty\Lambda_k\ast\Phi_k\ast f,\qquad\Psi_\nu\ast f=\sum_{k=0}^\infty\Psi_\nu\ast\Lambda_k\ast\Phi_k\ast f.
\end{align}
We have
\begin{align}
|(\Psi_\nu\ast\Lambda_k\ast\Phi_k\ast f)(y)|
&\leq\int_{\R^n}|(\Psi_\nu\ast\Lambda_k)(z)||(\Phi_k\ast f)(y-z)|dz\notag\\
&\leq(\Phi_k^*f)_a(y)\int_{\R^n}|(\Psi_\nu\ast\Lambda_k)(z)|(1+|2^kz|^a)dz\label{Comp_PO_s3}\\
&=:(\Phi_k^*f)_a(y)I_{\nu,k},\notag
\intertext{where}
I_{\nu,k}&:=\int_{\R^n}|(\Psi_\nu\ast\Lambda_k)(z)|(1+|2^kz|^a)dz\notag.
\end{align}

According to Lemma \ref{_LM_Lemma1} we get
\begin{align}
I_{\nu,k}\leq c
\begin{cases}\label{Comp_PO_s2}
2^{(k-\nu)R},\qquad&\text{ } k\leq\nu,\\
2^{(\nu-k)(a+1+|\alpha_1|)},\qquad&\text{ }\nu\leq k.
\end{cases}
\end{align}
Namely, we have for $1\leq k<\nu$ with the change of variables $2^kz\mapsto z$
\begin{align*}
I_{\nu,k}&=2^{-n}\int_{\R^n}|(\Psi_{\nu-k}\ast\Lambda_1(\cdot/2))(z)|(1+|z|^a)dz\\ 
&\leq c\sup_{z\in\R^n}|(\Psi_{\nu-k}\ast\Lambda_1(\cdot/2))(z)|(1+|z|)^{a+n+1}\leq c2^{(k-\nu)R}.
\end{align*}
Similarly, we get for $1\leq\nu<k$ with the substitution $2^\nu z\mapsto z$
\begin{align*}
I_{\nu,k}&=2^{-n}\int_{\R^n}|(\Psi_1(\cdot/2)\ast\Lambda_{k-\nu})(z)|(1+|2^{k-\nu}z|^a)dz\\
&\leq c2^{(\nu-k)(M-a)}.
\end{align*}
$M$ can be taken arbitrarily large because $\Lambda_1$ has infinitely many vanishing moments. Taking
$M=2a+|\alpha_1|+1$ we derive
\eqref{Comp_PO_s2} for the cases $k,\nu\geq1$ with $k\neq\nu$. The missing cases can be treated
separately in an analogous manner. The needed moment conditions are always satisfied by
\eqref{Comp_PO_sMomentCond} and \eqref{Comp_PO_s1,5}. The case $k=\nu=0$ is covered by the constant $c$ in \eqref{Comp_PO_s2}.

Furthermore, we have
\begin{align*}
(\Phi_k^*f)_a(y)&\leq(\Phi_k^*f)_a(x)(1+|2^k(x-y)|^a)\\
&\leq(\Phi_k^*f)_a(x)(1+|2^\nu(x-y)|^a)\max(1,2^{(k-\nu)a}).
\end{align*}
We put this into \eqref{Comp_PO_s3}
and get
\begin{align}
\sup_{y\in\R^n}\frac{|(\Psi_\nu\ast\Lambda_k\ast\Phi_k\ast f)(y)|}{1+|2^\nu(x-y)|^a}
&\leq c(\Phi_k^*f)_a(x)\begin{cases}
2^{(k-\nu)R},\qquad&\; k\leq\nu,\notag\\
2^{(\nu-k)(1+|\alpha_1|)},&\; k\geq\nu.\notag
\end{cases}
\end{align}
Multiplying both sides with $w_\nu(x)$ and using
\begin{align}
&\hspace{-5em}w_\nu(x)\leq w_k(x)\begin{cases}\notag
2^{(k-\nu)(-\alpha_2)},\qquad&\; k\leq\nu,\\
2^{(\nu-k)\alpha_1},\qquad&\; k\geq\nu,
\end{cases}
\end{align}
leads us to
\begin{align}
\sup_{y\in\R^n}\frac{|(\Psi_\nu\ast\Lambda_k\ast\Phi_k\ast f)(y)|}{1+|2^\nu(x-y)|^a}w_\nu(x)
&\leq c(\Phi_k^*f)_a(x)w_k(x)\begin{cases}\notag
2^{(k-\nu)(R-\alpha_2)},\qquad&\; k\leq\nu,\\
2^{(\nu-k)},&\; k\geq\nu.
\end{cases}
\end{align}

This inequality together with
\eqref{Comp_PO_s2,5} gives for $\delta:=\min(1,R-\alpha_2)>0$
\begin{align*}
(\Psi_\nu^*f)_a(x)w_\nu(x)\leq
c\sum_{k=0}^\infty2^{-|k-\nu|\delta}(\Phi_k^*f)_a(x)w_k(x),\quad x\in\R^n.
\end{align*}
Taking the $\ellqp$ norm and using Lemma \ref{_LM_Lemma2} yields immediately the desired result.
\end{proof}

\subsubsection{Boundedness of the Peetre maximal operator}\label{Sec:Bound}
We will present a theorem which describes the boundedness of the Peetre maximal operator. We use the same notation introduced at the
beginning of the last subsection. Especially, we have the functions $\psi_k\in\Sn$ and $\Psi_k=\hat{\psi}_k\in\Sn$ for all $k\in\N_0$.

\begin{thm}\label{Theorem:_Bo_PMO}
Let $(w_k)_{k\in\N_0}\in\mgk$, $a>0$ and $p,q\in\Plog$. For some $\varepsilon>0$ we assume $\psi_0,\psi_1\in\Sn$ with
\begin{align*}
|\psi_0|&>0\quad\text{on}\quad\{x\in\R^n:|x|<\varepsilon\},\\
|\psi_1|&>0\quad\text{on}\quad\{x\in\R^n:\varepsilon/2<|x|<2\varepsilon\}.
\end{align*}
For $a>\frac{n+c_{\log}(1/q)}{p^-}+\alpha$
\begin{align*}
\norm{(\Psi_k^*f)_aw_k}{\ellqp}\leq c\norm{(\Psi_k\ast f)w_k}{\ellqp}
\end{align*}
holds for all $f\in\SSn$.
\end{thm}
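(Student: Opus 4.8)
The plan is to follow Rychkov \cite{Rychkov}, combining his argument with the convolution inequality Lemma~\ref{Lemma_4} and the iteration Lemma~\ref{_LM_Lemma3} to accommodate the variable exponents and the weight $\vek{w}$. Only the inequality ``$\le$'' needs proof, the reverse being trivial from $|(\Psi_k\ast f)(x)|\le(\Psi_k^*f)_a(x)$. First I would reuse the construction in the proof of Theorem~\ref{Theorem:Comp_PO_s}: the Tauberian conditions on $\psi_0,\psi_1$ produce $\lambda_0,\lambda_1\in\Sn$ with $\lambda_j(x)=\lambda_1(2^{-j+1}x)$, $\supp\lambda_0\subset\{|x|\le\eps\}$, $\supp\lambda_1\subset\{\eps/2\le|x|\le2\eps\}$ and $\sum_{j\ge0}\lambda_j\psi_j\equiv1$ on $\R^n$, hence, with $\Lambda_j=\hat\lambda_j$, the identity $\Psi_\nu\ast f=\sum_{k\ge0}(\Psi_\nu\ast\Lambda_k)\ast(\Psi_k\ast f)$ in $\SSn$. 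Since $\lambda_1$ is supported away from the origin it has vanishing moments of every order, so Lemma~\ref{_LM_Lemma1} (after rescaling) yields, for all $N,M\in\N_0$, a kernel bound of the type $|(\Psi_\nu\ast\Lambda_k)(z)|\le c_{N,M}\,\eta_{\min(\nu,k),N}(z)\,2^{-M(k-\nu)_+}$, where $(t)_+=\max(t,0)$; that is, arbitrary decay in $k-\nu$ when $k\ge\nu$ but no decay when $k\le\nu$, because $\psi_1$ carries no moment condition. This is exactly the point where the present proof must depart from that of Theorem~\ref{Theorem:Comp_PO_s}.

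Next comes the ``$r$-trick''. Fix $0<r<1$ with $r<p^-$ and $r$ so close to $p^-$ that $a>\tfrac{n+c_{\log}(1/q)}{r}+\alpha$ (possible precisely by the hypothesis on $a$); then $\p/r\ge1$, so Lemma~\ref{Lemma_4} becomes applicable in $\ell_{\q/r}(L_{\p/r})$. Using $|(\Psi_k\ast f)(w)|^{1-r}\le[(\Psi_k^*f)_a(x)]^{1-r}(1+|2^k(w-x)|^a)^{1-r}$, distributing the factors $1+|2^\bullet(\cdot)|^a$ against the Peetre denominator $1+|2^\nu(y-x)|^a$, inserting the kernel bounds, and transporting $w_\nu(x)$ first past $w_k$ and then $w_k(x)$ into the kernels $\eta_{\min(\nu,k),\cdot}$ (the latter costing a factor $(1+2^k|\cdot|)^\alpha$, which is where the $\alpha$ in the hypothesis enters), one arrives, after raising to the power $r$ and using $(\sum a_k)^r\le\sum a_k^r$, at a pointwise inequality of the form
\[
\big[(\Psi_\nu^*f)_a(x)\,w_\nu(x)\big]^r\le c\sum_{k\ge0}2^{-|k-\nu|\delta r}\big(\eta_{\min(\nu,k),m}\ast[|\Psi_k\ast f|\,w_k]^r\big)(x)\,\big[(\Psi_k^*f)_a(x)\,w_k(x)\big]^{r(1-r)},
\]
with $\delta>0$ as large as desired on the range $k\ge\nu$. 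After recasting the non-decaying block $k\le\nu$ — which is spectrally localised at scales $\lesssim2^\nu$ — so that only indices $\ge\nu$ survive on the right-hand side, this has the shape required by Lemma~\ref{_LM_Lemma3}; and since $\nu\mapsto(\Psi_\nu^*f)_a(x)\,w_\nu(x)$ has at most polynomial growth in $2^\nu$ (as $f\in\SSn$ is of finite order and $\vek{w}\in\mgk$), Lemma~\ref{_LM_Lemma3} removes the self-referential factor $[(\Psi_k^*f)_a w_k]^{r(1-r)}$.

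It then remains to take the $\ell_{\q/r}(L_{\p/r})$ quasi-norm over $\nu$ of the surviving inequality, sum the geometric series in $|k-\nu|$ by Lemma~\ref{_LM_Lemma2} (shift invariance), discard the convolutions $\eta_{\min(\nu,k),m}\ast(\cdot)$ by Lemma~\ref{Lemma_4} (legitimate since $m>n+c_{\log}(r/q)$ by the choice of $r$), and pass back from the $r$-th powers; this produces $\norm{(\Psi_k^*f)_aw_k}{\ellqp}\le c\norm{(\Psi_k\ast f)w_k}{\ellqp}$, which is the assertion.

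\textbf{Main obstacle.} The genuinely hard point is the range $k\le\nu$ of the reproducing formula. Because $\psi_1$ has no vanishing moments the kernels $\Psi_\nu\ast\Lambda_k$ provide no smallness there and the weight quotient $w_\nu/w_k$ may even grow, so this part cannot be summed directly (as it could in Theorem~\ref{Theorem:Comp_PO_s}); it must first be converted — via the $r$-trick together with the spectral localisation of those terms — into a form compatible with Rychkov's iteration Lemma~\ref{_LM_Lemma3}, which is what ultimately closes the loop. The remaining burden is the careful bookkeeping of scales and of the points at which $\vek{w}$ is sampled, and it is precisely this bookkeeping — together with the behaviour of the $\ell_{q(\cdot)}$-modular, which makes the $c_{\log}(1/q)$-term scale like $1$ rather than like $r$ under the $r$-trick — that yields the exact threshold $a>\tfrac{n+c_{\log}(1/q)}{p^-}+\alpha$.
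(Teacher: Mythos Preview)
Your outline follows the general strategy of Rychkov and correctly identifies the difficulty (no smallness for $k\le\nu$ since $\psi_1$ carries no moment conditions), but the key step is missing, and the reproducing formula you start from is not the one that makes the argument close.

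You use the identity $\Psi_\nu\ast f=\sum_{k\ge0}(\Psi_\nu\ast\Lambda_k)\ast(\Psi_k\ast f)$ from the proof of Theorem~\ref{Theorem:Comp_PO_s}. With that formula the block $k\le\nu$ is genuinely bad, and your sentence ``after recasting the non-decaying block $k\le\nu$ \dots so that only indices $\ge\nu$ survive'' is exactly the content of the theorem --- you cannot leave it as a remark. The paper (and Rychkov) avoid this block from the outset by applying the partition of unity \emph{rescaled to level $\nu$}: from $\sum_{k\ge0}\lambda_k\psi_k\equiv1$ one also has $\sum_{k\ge0}\lambda_k(2^{-\nu}\cdot)\psi_k(2^{-\nu}\cdot)\equiv1$ for every $\nu$. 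Multiplying by $\psi_\nu$ and using the algebraic identity $\psi_k(2^{-\nu}x)\psi_\nu(x)=\sigma_{k,\nu}(x)\psi_{k+\nu}(x)$ (with $\sigma_{k,\nu}=\psi_\nu$ for $k\ge1$) yields
\[
\Psi_\nu\ast f=\sum_{k=0}^\infty \Lambda_{k,\nu}\ast\hat\sigma_{k,\nu}\ast\Psi_{k+\nu}\ast f,
\]
so that only indices $k+\nu\ge\nu$ appear on the right from the start. The kernel $\Lambda_{k,\nu}\ast\hat\sigma_{k,\nu}$ then satisfies a one-sided bound $\lesssim 2^{\nu n}2^{-kM}(1+|2^\nu z|^a)^{-1}$ for arbitrary $M$ (via Lemma~\ref{_LM_Lemma1}), and the $r$-trick produces directly an inequality of the shape $\gamma_\nu\le C\sum_{k\ge0}2^{-kN}\beta_{k+\nu}\gamma_{k+\nu}^{1-r}$, which is precisely what Lemma~\ref{_LM_Lemma3} consumes. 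Without this rescaled formula there is no obvious way to produce the one-sided sum Lemma~\ref{_LM_Lemma3} requires.

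A secondary point: in the paper the weight is introduced \emph{after} Lemma~\ref{_LM_Lemma3}, not before. One applies the iteration to the bare $\gamma_\nu=(\Psi_\nu^*f)_a(x)$ (so that the polynomial-growth hypothesis \eqref{_LM_Lemma3N_0} comes straight from the order of $f$), obtains \eqref{_Bo__PO_4}, and only then multiplies by $w_\nu(x)^r$ and passes $w_\nu$ to $w_{k+\nu}$ inside the integral at the cost of a factor $(1+|2^{k+\nu}(x-z)|)^{\alpha}$ in the denominator. Your displayed inequality already has $w_\nu$ inside and carries the power $r(1-r)$ on the self-referential term, which does not match the hypothesis of Lemma~\ref{_LM_Lemma3}; unwinding this would force you back to the paper's order of operations anyway.
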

\begin{rem}
Observe that in the theorem above no moment conditions on $\psi_1$ are stated but this time there are restrictions on $a$ and $\p,\q$. 
\end{rem}
\begin{proof}
As in the last proof we find the functions $(\lambda_j)_{j\in\N_0}$ with the properties \eqref{Comp_PO_s1,2}, \eqref{Comp_PO_s1,5}
and
\begin{align*}
\sum_{k=0}^\infty\lambda_k(2^{-\nu}x)\psi_k(2^{-\nu}x)=1\quad\text{for all $\nu\in\N_0$}.
\end{align*}
Instead of \eqref{Comp_PO_s2,5} we get the identity
\begin{align}\label{_Bo__PO_1}
\Psi_\nu\ast f=\sum_{k=0}^\infty\Lambda_{k,\nu}\ast\Psi_{k,\nu}\ast\Psi_\nu\ast f,
\end{align}
where
\begin{align*}
\Lambda_{k,\nu}(\xi)=[\lambda_k(2^{-\nu}\cdot)]^\wedge(\xi)=2^{\nu n}\Lambda_{k}(2^\nu\xi)\qquad\text{for all $\nu,k\in\N_0$}.
\end{align*}
The $\Psi_{k,\nu}$ are defined similarly. For $k\geq1$ and $\nu\in\N_0$ we have $\Psi_{k,\nu}=\Psi_{k+\nu}$ and with the notation
\begin{align*}
\sigma_{k,\nu}(x)=  \begin{cases}
\psi_0(2^{-\nu}x)\qquad&\text{ if }k=0,\\
\psi_\nu(x)\qquad&\text{otherwise}
\end{cases}
\end{align*}
we get
$\psi_k(2^{-\nu}x)\psi_\nu(x)=\sigma_{k,\nu}(x)\psi_{k+\nu}(x)$. Hence, we can rewrite \eqref{_Bo__PO_1} as
\begin{align}\label{_Bo__PO_2}
\Psi_\nu\ast
f=\sum_{k=0}^\infty\Lambda_{k,\nu}\ast\hat{\sigma}_{k,\nu}\ast\Psi_{k+\nu}\ast f.
\end{align}
For $k\geq1$ we get from Lemma \ref{_LM_Lemma1}
\begin{align}\label{_Bo__PO_2,5}
|(\Lambda_{k,\nu}\ast\hat{\sigma}_{k,\nu})(z)|=2^{(\nu-1) n}|(\Lambda_{k}\ast\Psi_1(\cdot/2))(2^\nu z)|
\leq C_M2^{\nu n}\frac{2^{-kM}}{(1+|2^\nu z|^a)}
\end{align}
for all $k,\nu\in\N_0$ and arbitrary large $M\in\N$. For $k=0$ we get the estimate \eqref{_Bo__PO_2,5} by using Lemma
\ref{_LM_Lemma1} with $M=0$. This together with \eqref{_Bo__PO_2} gives us
\begin{align}\label{_Bo__PO_3}
|(\Psi_\nu\ast f)(y)|\leq C_M2^{\nu n}\sum_{k=0}^\infty\int_{\R^n}\frac{2^{-kM}}{(1+|2^\nu(y-z)|^a)}|(\Psi_{k+\nu}\ast f)(z)|dz.
\end{align}
For fixed $r\in(0,1]$ we divide both sides of \eqref{_Bo__PO_3} by $(1+|2^\nu(x-y)|^a)$
and we take the supremum with respect to $y\in\R^n$. Using the inequalities
\begin{align*}
(1+|2^\nu(y-z)|^a)(1+|2^\nu(x-y)|^a)\geq c(1+|2^\nu(x-z)|^a),\\
|(\Psi_{k+\nu}\ast f)(z)|\leq|(\Psi_{k+\nu}\ast f)(z)|^r(\Psi_{k+\nu}^* f)_a(x)^{1-r}(1+|2^{k+\nu}(x-z)|^a)^{1-r}\\
\intertext{and}
\frac{(1+|2^{k+\nu}(x-z)|^a)^{1-r}}{(1+|2^{\nu}(x-z)|^a)}\leq\frac{2^{ka}}{(1+|2^{k+\nu}(x-z)|^a)^{r}},
\end{align*}
we get
\begin{align}
\!\!(\Psi_\nu^*f)_a(x)\leq
C_M\sum_{k=0}^\infty2^{-k(M+n-a)}(\Psi_{k+\nu}^*f)_a(x)^{1-r}\!\!\!\int_{\R^n}\!\!\!
\frac{2^{(k+\nu)n}|(\Psi_{k+\nu}\ast f)(z)|^r}{(1+|2^{k+\nu}(x-z)|^a)^{r}}dz.
\end{align}
Now, we apply Lemma \ref{_LM_Lemma3} with
\begin{align*}
\gamma_\nu=(\Psi_\nu^*f)_a(x),\qquad\beta_\nu=
\int_{\R^n}\frac{2^{\nu n}|(\Psi_{\nu}\ast f)(z)|^r}{(1+|2^{\nu}(x-z)|^a)^{r}}dz,\quad\nu\in\N_0
\end{align*}
$N=M+n-a$, $C_N=C_M+n-a$ and $N^0$ in \eqref{_LM_Lemma3N_0} equals the order of the distribution $f\in\SSn$.

By Lemma \ref{_LM_Lemma3} we obtain for every $N\in\N$, $x\in\R^n$ and $\nu\in\N_0$
\begin{align}\label{_Bo__PO_4}
(\Psi_\nu^*f)_a(x)^r\leq 
C_N\sum_{k=0}^\infty2^{-kNr}\int_{\R^n}\frac{2^{(k+\nu)n}|(\Psi_{k+\nu}\ast f)(z)|^r}{(1+|2^{k+\nu}(x-z)|^a)^{r}}dz
\end{align}
provided that $(\Psi_\nu^*f)_a(x)<\infty$.

Since $f\in\SSn$, we see that $(\Psi_\nu^*f)_a(x)<\infty$ for all $x\in\R^n$ and all $\nu\in\N_0$ at 
least if $a>N^0$, where $N^0$ is the order of the distribution. Thus we have \eqref{_Bo__PO_4} with $C_N$ 
independent of $f\in\SSn$ for $a\geq N^0$ and therefore with $C_N=C_{N,f}$ for all $a>0$ (the right side of 
\eqref{_Bo__PO_4} decreases as $a$ increases). One can easily check that \eqref{_Bo__PO_4} with $C_N=C_{N,f}$ 
implies that if for some $a>0$ the right side of \eqref{_Bo__PO_4} is finite, then $(\Psi_\nu^*f)_a(x)<\infty$. 
Now, repeating the above argument resurrects the independence of $C_N$.
If the right side of \eqref{_Bo__PO_4} is infinite, there is nothing to prove. More exhaustive arguments of this type have been used in \cite{UllrichLM} 
and \cite{Scharf}. 

We point out that \eqref{_Bo__PO_4} holds also for $r>1$, where the proof is much simpler. We only have
to take \eqref{_Bo__PO_3} with $a+n$ instead of $a$, divide both sides by $(1+|2^\nu(x-y)|^a)$
and apply H{\"o}lder's inequality with respect to $k$ and then $z$.

Multiplying \eqref{_Bo__PO_4} by $w_\nu(x)^r$ we derive with the properties of our weight sequence
\begin{align}\label{ProofEnde2}
\!\!\!(\Psi_\nu^*f)_a(x)^rw_\nu(x)^r\leq
C'_N\sum_{k=0}^\infty2^{-k(N+\alpha_1)r}\!\!\!
\int_{\R^n}\!\!\!\frac{2^{(k+\nu)n}|(\Psi_{k+\nu}\ast f)(z)|^rw_{k+\nu}(z)^r}{(1+|2^{k+\nu}(x-z)|^{a-\alpha})^{r}}dz,
\end{align}
for all $x\in\R^n$, $\nu\in\N_0$ and all $N\in\N$.

Now, we choose $r=p^-$ and we have $r(a-\alpha)>n+c_{\log}(1/q)$. We denote $g^r_{k+\nu}(z)=|(\Psi_{k+\nu}\ast f)(z)|^rw_{k+\nu}(z)^r$ then we can 
rewrite \eqref{ProofEnde2} by
\begin{align}\label{ProofEnde2,5}
\!\!\!(\Psi_\nu^*f)_a(x)^rw_\nu(x)^r\leq
C'_N\sum_{l=\nu}^\infty2^{-(l-\nu)(N+\alpha_1)r}\left(g_l^r\ast\eta_{l,r(a-\alpha)}\right)(x).
\end{align}
For fixed $N>0$ with $\delta=N+\alpha_1>0$ we apply the $\ell_{\frac\q r}(L_{\frac\p r})$ norm and derive from \eqref{ProofEnde2,5}
\begin{align*}
\norm{(\Psi_k^*f)_a^rw_k^r}{\ell_{\q/r}(L_{\p/r})}\leq C_N\norm{\sum_{l=\nu}^\infty2^{-(l-\nu)\delta}\left(g_l^r\ast\eta_{l,r(a-\alpha)}\right)}{\ell_{\q/r}(L_{\p/r})}.
\end{align*}
Now application of Lemma \ref{_LM_Lemma2} and Lemma \ref{Lemma_4} ($r(a-\alpha)>n+c_{\log}(1/q)$) on the formula above give us
\begin{align*}
 \norm{(\Psi_k^*f)_a^r(\cdot)w_k^r(\cdot)}{\ell_{\q/r}(L_{\p/r})}\leq C'_N\norm{|(\Psi_{\nu}\ast f)(\cdot)|^rw_{\nu}(\cdot)^r}{\ell_{\q/r}(L_{\p/r})}
\end{align*}
which proves the theorem.
\end{proof}

\subsection{Local means characterization of $\Bspqpunkt$ and $\Fspqpunkt$}\label{SecLocsx}
In this section we reformulate the local means characterization for $\Bwpqpunkt$ from above and for $\Fwpqpunkt$ from Corollary 4.7 in \cite{Kempka09} in terms of variable smoothness. If we have a variable smoothness function $s\in C_{loc}^{\log}(\R^n)$ given, then $w_j(x)=2^{js(x)}$ defines an admissible weight sequence $\vek{w}\in\mgk$ with $\alpha_1=s^-$, $\alpha_2=s^+$ and $\alpha=c_{\log}(s)$, cf. Remark \ref{rem:2mlsx}. Here, we denote by $c_{\log}(s)$ the constant in \eqref{eq:defclog} for $\s$. 
\begin{thm}\label{thm:sx:loc1}
Let $p,q\in\Plog$ ($p^+,q^+<\infty$ in the F-case) and $s\in C_{loc}^{\log}(\R^n)$. Further let $a>0$, $R\in\N_0$ with $R>s^+$
and let $\psi_0,\psi_1$ belong to $\Sn$ with
\begin{align*}
D^\beta\psi_1(0)=0,\quad\text{ for }0\leq|\beta|< R,
\end{align*}
and
\begin{align*}
|\psi_0(x)|&>0\quad\text{on}\quad\{x\in\R^n:|x|<\varepsilon\}\\
|\psi_1(x)|&>0\quad\text{on}\quad\{x\in\R^n:\varepsilon/2<|x|<2\varepsilon\}
\end{align*}
for some $\varepsilon>0$.
\begin{enumerate}
	\item For $a>\frac{n+c_{\log}(1/q)}{p^-}+c_{\log}(s)$ and all $f\in\SSn$ we have
\begin{align*}
\norm{f}{\Bspqpunkt}\approx\norm{2^{ks(\cdot)}(\Psi_k\ast f)}{\ellqp}\approx\norm{2^{ks(\cdot)}(\Psi_k^*f)_a}{\ellqp}.
\end{align*}
\item	For $a>\frac{n}{\min(p^-,q^-)}+c_{\log}(s)$ and all $f\in\SSn$ we have 
\begin{align*}
\norm{f}{\Fspqpunkt}\approx\norm{2^{ks(\cdot)}(\Psi_k\ast f)}{\ellpq}\approx\norm{2^{ks(\cdot)}(\Psi_k^*f)_a}{\ellpq}.
\end{align*}
\end{enumerate}
\end{thm}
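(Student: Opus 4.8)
The plan is to obtain both parts by specializing the 2-microlocal local means characterizations of the previous subsections to the weight sequence generated by $s$; almost nothing beyond a matching of parameters is required. Concretely, I would first recall, as in Remark~\ref{rem:2mlsx}, that for $s\in C^{\log}_{loc}(\R^n)$ the sequence $\vek{w}=(2^{js(\cdot)})_{j\in\N_0}$ is admissible, $\vek{w}\in\mgk$, with $\alpha_1=s^-$, $\alpha_2=s^+$ and third index $\alpha=c_{\log}(s)$; the estimate for the third index is the only mildly non-trivial point and follows from Lemma~\ref{Lem:eta1} (with $m=0$) together with the $\log$-H\"older condition \eqref{eq:defclog} on $s$. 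For this weight the Definitions~\ref{def:W_Spaces} and~\ref{def:BFpunkt} give $\norm{f}{\Bwpqpunkt}=\norm{f}{\Bspqpunkt}$ and $\norm{f}{\Fwpqpunkt}=\norm{f}{\Fspqpunkt}$, and moreover $w_k(\Psi_k\ast f)=2^{ks(\cdot)}(\Psi_k\ast f)$ and $w_k(\Psi_k^*f)_a=2^{ks(\cdot)}(\Psi_k^*f)_a$, so the three quantities occurring in part~(1) are exactly those of Theorem~\ref{Theorem:_LM_abstrakt} and the three in part~(2) exactly those of its $F$-counterpart.

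For part~(1) I would then apply Theorem~\ref{Theorem:_LM_abstrakt} verbatim to this $\vek{w}$. The hypothesis $R>\alpha_2$ becomes $R>s^+$, which is assumed; the moment condition $D^\beta\psi_1(0)=0$ for $|\beta|<R$ and the Tauberian conditions on $\psi_0,\psi_1$ are literally the same; and the admissibility bound $a>\tfrac{n+c_{\log}(1/q)}{p^-}+\alpha$ turns into $a>\tfrac{n+c_{\log}(1/q)}{p^-}+c_{\log}(s)$, which is precisely the hypothesis imposed in part~(1). Theorem~\ref{Theorem:_LM_abstrakt} then delivers the claimed chain of equivalences for $\norm{f}{\Bspqpunkt}$.

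For part~(2) I would argue in the same way, but now starting from the local means characterization of the 2-microlocal Triebel--Lizorkin spaces $\Fwpqpunkt$, i.e. Corollary~4.7 in \cite{Kempka09}, again with $w_j=2^{js(\cdot)}$. There the condition on $a$ reads $a>\tfrac{n}{\min(p^-,q^-)}+\alpha$ --- the improvement over the $B$-case (no $c_{\log}(1/q)$ summand, and $\min(p^-,q^-)$ in place of $p^-$) being a consequence of the fact that the vector-valued convolution/maximal estimate needed in the $\ellpq$ setting is available under this weaker restriction on $a$. With $\alpha=c_{\log}(s)$ this becomes $a>\tfrac{n}{\min(p^-,q^-)}+c_{\log}(s)$, and part~(2) follows. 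Thus the whole proof is a translation and presents no genuine obstacle; the only items deserving care are the identification of the third weight index with $c_{\log}(s)$ in the passage to 2-microlocal notation (Remark~\ref{rem:2mlsx}) and, for part~(2), having the $F$-version of the abstract local means theorem at one's disposal, which is exactly Corollary~4.7 of \cite{Kempka09}.
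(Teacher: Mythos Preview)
Your proposal is correct and follows exactly the paper's own approach: the paper does not give a separate proof of this theorem but simply states that it is the reformulation of Theorem~\ref{Theorem:_LM_abstrakt} (for the $B$-case) and of Corollary~4.7 in \cite{Kempka09} (for the $F$-case) with the weight sequence $w_j=2^{js(\cdot)}$, using Remark~\ref{rem:2mlsx} to identify $\alpha_1=s^-$, $\alpha_2=s^+$ and $\alpha=c_{\log}(s)$. Your matching of the hypotheses is accurate and nothing further is needed.
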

\begin{rem}
During the referee process of this work, there appeared in \cite{Drihem} a characterization by local means and a characterization by atoms for $\Bspqpunkt$. The author moved the smoothness sequence $2^{ks(\cdot)}$ into the Peetre maximal operator \eqref{_PO_modifiziert} and modified it to 
\begin{align*}
	(\Psi_k^*2^{k\s}f)_a(x)=\sup_{y\in\R^n}\frac{2^{ks(y)}|(\Psi_k\ast f)(y)|}{1+|2^k(y-x)|^a}.
\end{align*} 
For this modified Peetre maximal operator he obtained in \cite[Theorem 2]{Drihem} an equivalence of the norms similar to our Theorem \ref{thm:sx:loc1} for $\Bspqpunkt$. The advantage of his method is that the condition on $a>0$ weakens to $a>\frac{n}{p^-}$.
\end{rem}

\section{Ball means of differences}

This section is devoted to the characterization of Besov and Triebel-Lizorkin spaces $\Bspqpunkt$
and $\Fspqpunkt$ by ball means of differences. In the case of constant indices $p,q$ and $s$,
this is a classical part of the theory of function spaces. We refer especially to \cite[Section 2.5]{Triebel1}
and references given there. It turns out that, under the restriction
\begin{equation}\label{eq:cond1}
s>\sigma_{p}=n\left(\frac{1}{\min(p,1)}-1\right)
\end{equation}
in the $B$-case and
\begin{equation}\label{eq:cond2}
s>\sigma_{p,q}=n\left(\frac{1}{\min(p,q,1)}-1\right)
\end{equation}
in the $F$-case, Besov and Triebel-Lizorkin spaces with constant indices may be characterized by expressions involving only the differences
of the function values without any use of Fourier analysis. This was complemented in \cite{Connie1}
and \cite{Connie2} by showing that these conditions are also indispensable. Of course, we are limited by \eqref{eq:cond1}
and \eqref{eq:cond2} also in the case of variable exponents.

The characterization by (local means of) differences for 2-microlocal spaces with constant $p,q>1$ was given by Besov \cite{Bes1} and a similar characterization for Besov spaces with $p=q=\infty$ and the special weight sequence from \eqref{tmlweights} was given by Seuret and Levy V\'eh\'el in \cite{VehelSeuret03}. 
We refer to \cite{FarLeo} and \cite{Kal} for the treatment of spaces of generalized smoothness.

Our approach follows essentially \cite{Triebel1} with some modifications described in \cite{U}.
The main obstacle on this way is the unboundedness of the maximal operator in the frame of $\ellpq$ and $\ellqp$
spaces, cf. \cite[Section 5]{DHR}  and \cite[Example 4.1]{AlmeidaHasto}. This is circumvented by the use of
convolution with radial functions in the sense of \cite{DHR} and \cite{AlmeidaHasto} together with
a certain bootstrapping argument, which shall be described in detail below. 

The plan of this part of the work is as follows.
First we give in Section \ref{Sec:Diff:Not} the necessary notation. 
We state the main assertions of this part in Section \ref{Sec:Diff:Thm}.
Then we prove in Section \ref{Sec:Diff:First} a certain preliminary version of these assertions.
In Section \ref{Sec:Diff:q} we prove a characterization by ball means of differences for spaces with $q\in(0,\infty]$ constant (where the maximal operator
is bounded) and use this together with our preliminary characterization from Section \ref{Sec:Diff:First}
to conclude the proof. Finally, in Section \ref{sec:2mldiff} we will present the ball means of differences characterization also for the 2-microlocal function spaces $\Bwpqpunkt$ and $\Fwpqpunkt$ and in Section \ref{Sec:Lemmas} we present separately some useful Lemmas, not to
disturb the main proofs of this part.

\subsection{Notation} \label{Sec:Diff:Not}

Let $f$ be a function on $\R^n$ and let $h\in\R^n$. Then we define
$$
 \Delta^1_h f(x)=f(x+h)-f(x),\quad x\in \R^n.
$$
The higher order differences are defined inductively by
$$
\Delta^M_h f(x)=\Delta^1_h(\Delta^{M-1}_h f)(x), \quad M=2,3,\dots
$$
This definition also allows a direct formula
\begin{equation}\label{eq:dif3}
\Delta^M_h f(x):=\sum_{j=0}^M (-1)^j\binom{M}{j}f(x+(M-j)h).
\end{equation}

By \emph{ball means of differences} we mean the quantity
$$
d^M_{t}f(x)=t^{-n}\int_{|h|\le t}|\Delta^M_{h}f(x)| dh = \int_B |\Delta^M_{th}f(x)|dh,
$$
where $B=\{y\in\R^n:|y|<1\}$ is the unit ball of $\R^n$, $t>0$ is a real number and $M$ is a natural number.

Let us now introduce the (quasi-)norms, which shall be the main subject of our study. We define
\begin{align}\label{eq:def:F1}
\begin{split}
	\|f|\Fspqpunkt\|^*&:=\|f|L_{\p}(\R^n)\|\\
&\qquad+\left\|\left(\int_0^\infty t^{-s(x)q(x)}
\left(d^M_{t}f(x)\right)^{q(x)}\frac{dt}{t}\right)^{1/q(x)}\biggl|L_{p(\cdot)}(\R^n)\right\|
\end{split}
\end{align}
and its partially discretized counterpart
\begin{align*}
\|f|\Fspqpunkt\|^{**}&:=\|f|L_{\p}(\R^n)\|\\
&\quad+\left\|\left(\sum_{k=-\infty}^\infty 2^{ks(x)q(x)}\left(d^M_{2^{-k}}f(x)\right)^{q(x)}\right)^{1/q(x)}
\biggl|L_{p(\cdot)}(\R^n)\right\|\\
&=\|f|L_{\p}(\R^n)\|+\left\|\left(2^{ks(x)}d^M_{2^{-k}}f(x)\right)_{k=-\infty}^\infty\biggl|\ellpq\right\|.
\end{align*}
The norm $\|f|\Fspqpunkt\|^{**}$ admits a direct counterpart also for Besov spaces, namely
\begin{equation}\label{eq:def:B1}
\|f|\Bspqpunkt\|^{**}:=\|f|L_{\p}(\R^n)\|+\left\|\left(2^{ks(x)}d^M_{2^{-k}}f(x)\right)_{k=-\infty}^\infty|\ellqp\right\|.
\end{equation}

Finally, we shall use as a technical tool also the analogues of \eqref{eq:def:F1}--\eqref{eq:def:B1}
with the integration over $t$ restricted to $0<t<1$. This leads to the following expressions
\begin{align*}
\|f|\Fspqpunkt\|_1^*&:=\|f|L_{\p}(\R^n)\|\\
&\qquad+\left\|\left(\int_0^1 t^{-s(x)q(x)}
\left(d^M_{t}f(x)\right)^{q(x)}\frac{dt}{t}\right)^{1/q(x)}\biggl|L_{p(\cdot)}(\R^n)\right\|,\\
\|f|\Fspqpunkt\|^{**}_1&:=\|f|L_{\p}(\R^n)\|\\
&\qquad+\left\|\left(\sum_{k=0}^\infty 2^{ks(x)q(x)}\left(d^M_{2^{-k}}f(x)\right)^{q(x)}\right)^{1/q(x)}
\biggl|L_{p(\cdot)}(\R^n)\right\|\\
&=\|f|L_{\p}(\R^n)\|+\left\|\left(2^{ks(x)} d^M_{2^{-k}}f(x)\right)_{k=0}^\infty\biggl|\ellpq\right\|,\\
\|f|\Bspqpunkt\|_1^{**}&:=\|f|L_{\p}(\R^n)\|+\left\|\left(2^{ks(x)}d^M_{2^{-k}}f(x)\right)_{k=0}^\infty|\ellqp\right\|.
\end{align*}

\subsection{Main Theorem}\label{Sec:Diff:Thm}

Using the notation introduced above, we may now state the main result of this section.

\begin{thm}\label{thm:dif}
(i) Let $p,q\in\Plog$ with $p^+,q^+<\infty$ and $s\in C^{\rm log}_{loc}(\R^n)$. Let $M\in\N$ with $M>s^+$ and let
\begin{equation}\label{eq:thm:cond1}
s^->\sigma_{p^-,q^-}\cdot\left[1+\frac{c_{\rm log}(s)}{n}\cdot \min(p^-,q^-)\right].
\end{equation}
Then
$$
\Fspqpunkt=\{f\in \Lpp\cap\SSn:\|f|\Fspqpunkt\|^*<\infty\}
$$
and $\|\cdot|\Fspqpunkt\|$ and $\|\cdot|\Fspqpunkt\|^*$ are equivalent on $\Fspqpunkt$.
The same holds for $\|f|\Fspqpunkt\|^{**}$.

(ii) Let $p,q\in\Plog$ and $s\in C^{\rm log}_{loc}(\R^n)$. Let $M\in\N$ with $M>s^+$ and let
\begin{equation}\label{eq:thm:cond2}
s^->\sigma_{p^-}\cdot \left[1+\frac{c_{\rm log}(1/q)}{n}+\frac{c_{\rm log}(s)}{n}\cdot p^-\right].
\end{equation}
Then
$$
\Bspqpunkt=\{f\in \Lpp\cap\SSn:\|f|\Bspqpunkt\|^{**}<\infty\}
$$
and $\|\cdot|\Bspqpunkt\|$ and $\|\cdot|\Bspqpunkt\|^{**}$ are equivalent on $\Bspqpunkt$.
\end{thm}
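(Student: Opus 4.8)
The plan is to derive the theorem from the local means characterization of Theorem~\ref{thm:sx:loc1}: by definition $\norm{f}{\Fspqpunkt}=\|2^{js(\cdot)}(\varphi_j\widehat{f})\tah|\ellpq\|$, and Theorem~\ref{thm:sx:loc1} identifies this, for $a$ large, with the Peetre-maximal expression $\|2^{js(\cdot)}(\varphi_j^{*}f)_a|\ellpq\|$ (and correspondingly in $\ellqp$ for the $B$-scale). Two estimates are then required: $(A)$ $\norm{f}{\Fspqpunkt}\lesssim\|f|\Fspqpunkt\|^{**}$ for $f\in\Lpp\cap\SSn$ with finite right-hand side, and $(B)$ $\|f|\Fspqpunkt\|^{**}\lesssim\norm{f}{\Fspqpunkt}$; combined with the routine equivalences among $\|\cdot\|^{*}$, $\|\cdot\|^{**}$ and their variants with integration over $t$ restricted to $(0,1)$ — which rest on the near-monotonicity of $t\mapsto d^M_tf(x)$, the discretization of the integral over $t$ (where the $\log$-H\"older continuity of $s$ and $q$ enters) and the embedding $\Fspqpunkt\hookrightarrow\Lpp$ (valid since $s^->0$ under \eqref{eq:thm:cond1}, \eqref{eq:thm:cond2}) — this yields Theorem~\ref{thm:dif}. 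The equivalences together with $(A)$ are the content of the preliminary version of Section~\ref{Sec:Diff:First}.

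For $(A)$ I would use a local means kernel adapted to the $M$-th difference: by a standard modification of the Tauberian functions (in the spirit of \cite[Sect.~2.5]{Triebel1} and \cite{U}), starting from $\omega\in C_0^\infty$ with $\supp\omega\subset\{|x|\le1\}$ and $\int\omega=0$ one obtains $K(z)=\sum_{l=0}^{M-1}(-1)^l\binom{M}{l}(M-l)^{-n}\omega(z/(M-l))$, again supported in $\{|z|\le1\}$, with $D^\beta\widehat{K}(0)=0$ for $|\beta|<M$ (because $\sum_{l=0}^M(-1)^l\binom{M}{l}(M-l)^m=0$ for $0\le m<M$) and, for suitable $\omega$, $\widehat{K}$ nonzero on a small annulus. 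Then $\psi_1:=\widehat{K}$ together with any $\psi_0\in\Sn$ positive near the origin satisfy the hypotheses of Theorem~\ref{thm:sx:loc1} with $R=M>s^+$, and (up to normalization $\Psi_1(z)=K(-z)$) the vanishing $\int\omega=0$ turns the convolution into a difference average, $(\Psi_k\ast f)(x)=\int_{|h|\le1}\omega(h)\,\Delta^M_{2^{-k+1}h}f(x)\,dh$, whence $|(\Psi_k\ast f)(x)|\le\|\omega\|_\infty\,d^M_{2^{-k+1}}f(x)$ for $k\ge1$, while $|(\Psi_0\ast f)(x)|\lesssim(\eta_{0,m}\ast|f|)(x)$. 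By Theorem~\ref{thm:sx:loc1} this gives $\norm{f}{\Fspqpunkt}\approx\|2^{ks(\cdot)}(\Psi_k\ast f)|\ellpq\|\lesssim\|f|\Lpp\|+\|(2^{ks(\cdot)}d^M_{2^{-k+1}}f)_{k\ge1}|\ellpq\|\lesssim\|f|\Fspqpunkt\|^{**}$, using that $s$ is bounded (so reindexing $k$ is harmless) and that convolution with a Schwartz function is bounded on $\Lpp$. This direction costs only $M>s^+$.

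For $(B)$ I would decompose $f=\sum_{j\ge0}f_j$ with $f_j=(\varphi_j\widehat{f})\tah$, so that $d^M_{2^{-k}}f(x)\le\sum_j d^M_{2^{-k}}f_j(x)$, and split the sum at $j=k$. For the coarse frequencies $j\le k$, the integral representation of $\Delta^M_h$ together with a Nikol'skij-type inequality gives $d^M_{2^{-k}}f_j(x)\lesssim 2^{(j-k)M}(\varphi_j^{*}f)_a(x)$, and writing $2^{ks(x)}=2^{(k-j)s(x)}2^{js(x)}$ the residual factor $2^{(j-k)(M-s(x))}$ decays because $M>s^+$. For the fine frequencies $j>k$, averaging $|\Delta^M_h f_j|$ over $|h|\le 2^{-k}$ cancels the oscillation of $f_j$ and gives $d^M_{2^{-k}}f_j(x)\lesssim(\eta_{k,m}\ast|f_j|)(x)$ for every $m$, after which $2^{(k-j)s(x)}\le 2^{-(j-k)s^-}$ furnishes the decay (so $s^->0$ is used here). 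Lemma~\ref{_LM_Lemma2} absorbs the two geometric series, reducing $(B)$ to a vector-valued maximal inequality for the sequence $(2^{js(\cdot)}\mathcal{M}f_j)_j$ in $\ellpq$ (resp.\ $\ellqp$). This is the hard part, since the Hardy--Littlewood maximal operator $\mathcal{M}$ is unbounded on $\ellpq$ and $\ellqp$ when $q$ is genuinely variable (cf.\ \cite[Example~4.1]{AlmeidaHasto}, \cite[Sect.~5]{DHR}).

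The obstacle is circumvented as announced in the introduction. One first proves $(B)$ for $q\in(0,\infty]$ \emph{constant} (Section~\ref{Sec:Diff:q}): there $\mathcal{M}$ is bounded on $L_{p(\cdot)}$ by Diening's theorem once $p^->1$, and the classical Fefferman--Stein/Triebel vector-valued argument applies after a power rescaling by $r<\min(p^-,q^-,1)$ that raises the exponents above $1$ and replaces $\mathcal{M}$ by $g\mapsto(\mathcal{M}|g|^r)^{1/r}$. This rescaling, combined with the $\log$-H\"older constant $\alpha=c_{\log}(s)$ of the weight $w_j=2^{js(\cdot)}$ that enters Lemma~\ref{Lemma_4} and the convolution-with-$\eta$ estimates, is exactly what converts $n/\min(p,q,1)$ into $\sigma_{p^-,q^-}$ and produces the correction factor $[1+\tfrac{c_{\log}(s)}{n}\min(p^-,q^-)]$ of \eqref{eq:thm:cond1}; in the $B$-case only the power $p^-$ is available, which gives $\sigma_{p^-}$ and the additional $\tfrac{c_{\log}(1/q)}{n}$-term in \eqref{eq:thm:cond2}. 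One then bootstraps from constant $q$ to variable $q$ by a sandwiching argument based on $q^-\le q(\cdot)\le q^+$, the pointwise embeddings $\ell_{q^-}\hookrightarrow\ell_{q(\cdot)}\hookrightarrow\ell_{q^+}$, and the preliminary version of Section~\ref{Sec:Diff:First}. Finally, the 2-microlocal statements for $\Bwpqpunkt$ and $\Fwpqpunkt$ (Section~\ref{sec:2mldiff}) follow from the same computations with $2^{js(\cdot)}$ replaced by the weight sequence $w_j$ and the $\log$-H\"older continuity of $s$ replaced by property~\eqref{sxClogeigenschaft}, as in Remark~\ref{rem:2mlsx}.
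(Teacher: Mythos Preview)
Your overall architecture matches the paper's: direction $(A)$ via a local-means kernel built from an $M$-th difference (your compactly supported $\omega$ is a legitimate variant of the paper's Schwartz kernel $g=\hat\psi$, trading the decay argument \eqref{eq:diff:rev3} for a Tauberian check), direction $(B)$ via the Littlewood--Paley split $f=\sum_j f_{(j)}$ with coarse/fine dichotomy at $j=k$, and then a passage from constant $q$ to variable $q$ using the preliminary estimates of Section~\ref{Sec:Diff:First}. The coarse part $j\le k$ is handled exactly as you say, by Lemma~\ref{lem:dif_Peetre} and $M>s^+$.

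The gap is in the fine-frequency part when $\min(p^-,q^-)<1$ (resp.\ $p^-<1$). Your estimate $d^M_{2^{-k}}f_j\lesssim\eta_{k,m}\!*\!|f_j|$ is correct, but from there the ``power rescaling by $r<\min(p^-,q^-,1)$ that replaces $\mathcal{M}$ by $g\mapsto(\mathcal{M}|g|^r)^{1/r}$'' does \emph{not} work: for $r<1$ Jensen's inequality gives $(\mathcal{M}|g|^r)^{1/r}\le\mathcal{M}g$, so you cannot dominate $\mathcal{M}f_j$ (or $\eta_{k,m}\!*\!|f_j|$) by $(\mathcal{M}|f_j|^r)^{1/r}$. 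The paper's device, which you do not mention, is to split \emph{pointwise in $h$, before averaging}:
\[
|\Delta^M_{2^{-k}h}f_{(k+l)}(x)|=|\Delta^M_{2^{-k}h}f_{(k+l)}(x)|^{\lambda}\cdot|\Delta^M_{2^{-k}h}f_{(k+l)}(x)|^{1-\lambda},\qquad 0<\lambda<\min(p^-,q^-).
\]
The $(1-\lambda)$-factor is bounded via Lemma~\ref{lem:dif_Peetre} by $\bigl(2^{la}P_{2^{k+l},a}f_{(k+l)}(x)\bigr)^{1-\lambda}$, after which the $\lambda$-factor can legitimately be averaged to produce $\mathcal{M}(|f_{(k+l)}|^\lambda)$ (constant $q$) or, via the $r$-trick Lemma~\ref{lem:r-trick} and Lemma~\ref{lem:conv}, $\eta_{k,m}\!*\!|f_{(k+l)}|^\lambda$ (variable $q$). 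The Peetre piece is then controlled by Theorem~\ref{thm:sx:loc1}, and H\"older's inequality recombines the two factors (this is \eqref{eq:dif6}--\eqref{eq:dif10} and Lemma~\ref{lem:Hoelder}). The price is the growing factor $2^{la(1-\lambda)}$, which must be beaten by the smoothness gain $2^{-ls^-}$; the simultaneous constraints $a(1-\lambda)<s^-$, $\lambda<\min(p^-,q^-)$ and $a>\tfrac{n}{\min(p^-,q^-)}+c_{\log}(s)$ (resp.\ $a>\tfrac{n+c_{\log}(1/q)}{p^-}+c_{\log}(s)$) are precisely what force \eqref{eq:thm:cond1} (resp.\ \eqref{eq:thm:cond2}). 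Without this $\lambda$-split your argument does not cover the sub-Banach range and leaves the specific form of the smoothness conditions unexplained.

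A smaller point: the bootstrap is not a two-sided sandwich $\ell_{q^-}\hookrightarrow\ell_{q(\cdot)}\hookrightarrow\ell_{q^+}$. The paper proves the truncated chain $\|\cdot\|_1^{**}\lesssim\|\cdot\|\lesssim\|\cdot\|^{**}$ for variable $q$ directly (the $\eta$-convolution Lemmas~\ref{Lemma_4}, \ref{lem:ConvDHR} apply since $k\in\N_0$), and invokes the constant-$q$ theorem only to bound the tail $\sum_{k\le 0}$ of $\|\cdot\|^{**}$, via $\ell_{q(\cdot)}\hookrightarrow\ell_{q^-}$ at negative $k$ (costing an $\varepsilon$ of smoothness) together with the embedding $\Fspqpunkt\hookrightarrow F^{s(\cdot)-\varepsilon}_{p(\cdot),q^-}(\R^n)$.
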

\begin{rem}
Let us comment on the rather technical conditions \eqref{eq:thm:cond1} and \eqref{eq:thm:cond2}.
\begin{itemize}
\item If $\min(p^-,q^-)\ge 1$, then \eqref{eq:thm:cond1} becomes just $s^->0$. Furthermore, if $p$, $q$
and $s$ are constant functions, then \eqref{eq:thm:cond1} coincides with \eqref{eq:cond2}.
\item If $p^-\ge 1$, then \eqref{eq:thm:cond2} reduces also to $s^->0$ and in the case
of constant exponents we again recover \eqref{eq:cond1}.
\end{itemize}
\end{rem}
As indicated already above, the proof is divided into several parts.

\subsection{Preliminary version of Theorem \ref{thm:dif}}\label{Sec:Diff:First}

This subsection contains a preliminary version of Theorem \ref{thm:dif} (Lemma \ref{Lem:First}).
Its proof represents the heart of the proof of Theorem \ref{thm:dif}. For better lucidity, it is again divided into more parts.

\begin{lem}\label{Lem:First}
Under the conditions of Theorem \ref{thm:dif}, the following estimates hold for all $f\in \Lpp\cap\SSn$:
\begin{align}
\label{eq:todo1}\|f|\Fspqpunkt\|^{*}&\approx \|f|\Fspqpunkt\|^{**},\\
\label{eq:todo2}\|f|\Fspqpunkt\|_1^{*}&\approx \|f|\Fspqpunkt\|_1^{**},\\
\label{eq:todo3}\|f|\Fspqpunkt\|_1^{**}&\lesssim \|f|\Fspqpunkt\|\lesssim \|f|\Fspqpunkt\|^{**},\\
\label{eq:todo4}\|f|\Bspqpunkt\|_1^{**}&\lesssim \|f|\Bspqpunkt\|\lesssim \|f|\Bspqpunkt\|^{**}.
\end{align}
\end{lem}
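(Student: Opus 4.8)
The plan is to follow the classical scheme of \cite{Triebel1}, adapted to the variable-exponent setting via the local means characterization (Theorem \ref{thm:sx:loc1}) and the convolution inequality of Lemma \ref{Lemma_4}, while being careful that the Hardy--Littlewood maximal operator is \emph{not} available on $\ellpq$ or $\ellqp$. I would first dispose of the three equivalences \eqref{eq:todo1}, \eqref{eq:todo2} and the left-hand inequalities in \eqref{eq:todo3}--\eqref{eq:todo4}, which are the ``easier'' directions, and then concentrate the real work on the right-hand estimates $\|f|\Fspqpunkt\|\lesssim\|f|\Fspqpunkt\|^{**}$ and $\|f|\Bspqpunkt\|\lesssim\|f|\Bspqpunkt\|^{**}$, which is where condition \eqref{eq:thm:cond1}/\eqref{eq:thm:cond2} enters.

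For \eqref{eq:todo1} and \eqref{eq:todo2}: since $t\mapsto d^M_t f(x)$ is, up to constants, monotone on dyadic blocks (for $2^{-k-1}\le t\le 2^{-k}$ one has $d^M_t f(x)\le 2^n d^M_{2^{-k}}f(x)$ and conversely a comparable lower bound after integrating), the continuous integral $\int_0^\infty t^{-s(x)q(x)}(d^M_tf(x))^{q(x)}\tfrac{dt}{t}$ is pointwise equivalent (in $x$) to the dyadic sum $\sum_k 2^{ks(x)q(x)}(d^M_{2^{-k}}f(x))^{q(x)}$; one only needs $s^+<\infty$ and $q^+<\infty$ to control the block constants uniformly, and then takes the $L_{\p}$-quasinorm of both sides. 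The restricted versions \eqref{eq:todo2} are identical with the sum running over $k\ge 0$. For the left inequalities $\|f|\Fspqpunkt\|_1^{**}\lesssim\|f|\Fspqpunkt\|$ and its $B$-analogue, I would use the standard estimate of differences by local means: writing $f=\sum_{j\ge0}(\varphi_j\hat f)\tah$, one splits $\Delta^M_h f = \sum_{j\le k}\Delta^M_h f_j + \sum_{j>k}\Delta^M_h f_j$ for $|h|\le 2^{-k}$; on the low-frequency part $\|\Delta^M_h f_j\|_\infty\lesssim (2^j|h|)^M\|f_j\|_\infty$ — actually one needs the pointwise Peetre-maximal control $|\Delta^M_h f_j(x)|\lesssim \min(1,(2^{j-k})^M)(\Psi_j^*f)_a(x)$ (valid since $|h|\le 2^{-k}$ and using $M>s^+$ so the geometric series in $2^{(j-k)(M-s(x))}$ converges) — and on the high-frequency part simply $|\Delta^M_h f_j(x)|\lesssim 2^{M}(\Psi_j^*f)_a(x)$. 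Multiplying by $2^{ks(x)}$, summing the resulting geometric series in $\min(2^{(j-k)M},1)\,2^{(k-j)s(x)}$ with help of Lemma \ref{_LM_Lemma2}, and invoking Theorem \ref{thm:sx:loc1} gives $\|f|\Fspqpunkt\|_1^{**}\lesssim\|f|\Fspqpunkt\|$; the term $\|f|L_\p\|$ is absorbed since $\Fspqpunkt\hookrightarrow\Lpp$ under our hypotheses (the embedding following again from the local means characterization and $s^->0$).

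The genuinely hard direction is $\|f|\Fspqpunkt\|\lesssim\|f|\Fspqpunkt\|^{**}$ (and the $B$-version). Here I would build, following \cite{Triebel1} and the modifications in \cite{U}, a Calder\'on-type reproducing formula: choose $k_0,k\in\Sn$ with enough vanishing moments and express each $f_j=(\varphi_j\hat f)\tah$ through a kernel acting on $f$; the point is to bound the local means $2^{js(x)}|k(2^{-j},f)(x)|$ by quantities of the form $2^{js(x)}\int_B |\Delta^M_{2^{-j}h}f(x)|\,dh = 2^{js(x)}d^M_{2^{-j}}f(x)$ plus faster-decaying tails, using that the kernel can be written as an average of $M$-th differences because it has $M$ vanishing moments (the standard identity $k(t,f)(x)=\int k(y)\Delta^M_{ty}f(x)\,dy$ up to adjusting $k$). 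One then arrives at $2^{js(x)}|k(2^{-j},f)(x)|\lesssim \sum_{l\ge0}2^{-l N}\cdot 2^{(j+l)s(x)}\,(\text{shifted ball-mean at scale }2^{-(j+l)})$ smoothed against a radial kernel $\eta_{j,m}$, i.e.\ a convolution structure $\eta_{j+l,m}\ast(2^{(j+l)s(\cdot)}d^M_{2^{-(j+l)}}f)(x)$. At this step the absence of the maximal operator bites: I would \emph{not} bound $\eta\ast(\cdot)$ by the maximal function but instead invoke Lemma \ref{Lemma_4} directly (after an auxiliary reduction to $\p\ge1$, $\q\ge1$ by the usual $r$-trick: raise everything to a small power $r<\min(p^-,q^-,1)$, replace $\p,\q,s$ by $\p/r,\q/r,rs$ and $n$ by... — here condition \eqref{eq:thm:cond1} is precisely what guarantees that after this rescaling one still has $m=r(s^-)\cdot(\text{stuff})>n+c_{\log}(1/q)$ so that Lemma \ref{Lemma_4} applies and the smoothness-shift $2^{(j+l)s(x)}/2^{js(x)}=2^{ls(x)}$ can be estimated against $2^{l s^+}$ while $2^{lc_{\log}(s)}$-type losses from moving $s(\cdot)$ across the convolution kernel are controlled by the $c_{\log}(s)$-term in \eqref{eq:thm:cond1}). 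Summing the geometric series in $l$ (convergent because $N$ is taken large and, crucially, the net exponent is negative thanks to $s^->\sigma_{p^-,q^-}[1+\tfrac{c_{\log}(s)}{n}\min(p^-,q^-)]$) and applying Lemma \ref{_LM_Lemma2} then yields $\|2^{ks(\cdot)}k(2^{-k},f)\|_{\ellpq}\lesssim \|f|\Fspqpunkt\|^{**}$, and Theorem \ref{thm:sx:loc1} finishes the $F$-case; the $B$-case is the same argument with $\ellqp$ in place of $\ellpq$, $r=p^-$, and condition \eqref{eq:thm:cond2}. The main obstacle, then, is precisely the bookkeeping of exponents in this reproducing-formula estimate: tracking how the variable smoothness $s(\cdot)$ passes through the convolution with $\eta_{j,m}$, what power $r$ one must descend to, and verifying that \eqref{eq:thm:cond1}/\eqref{eq:thm:cond2} is exactly the inequality making the relevant series converge and Lemma \ref{Lemma_4} applicable.
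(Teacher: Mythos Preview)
You have the direction of difficulty reversed. In the paper, condition \eqref{eq:thm:cond1} (resp.\ \eqref{eq:thm:cond2}) is used in the \emph{left-hand} inequality $\|f|\Fspqpunkt\|_1^{**}\lesssim\|f|\Fspqpunkt\|$, not in the right-hand one. Your claim that for the high-frequency part ($j=k+l$ with $l>0$, $|h|\le 2^{-k}$) one has ``simply $|\Delta^M_h f_j(x)|\lesssim 2^M(\Psi_j^*f)_a(x)$'' is wrong: Lemma~\ref{lem:dif_Peetre} gives
\[
|\Delta^M_{2^{-k}h}f_{(k+l)}(x)|\lesssim \max\bigl(1,|2^{k+l}\cdot 2^{-k}h|^a\bigr)\,(\Psi_{k+l}^*f)_a(x)\lesssim 2^{la}(\Psi_{k+l}^*f)_a(x),
\]
so after multiplying by $2^{ks(x)}$ you face $2^{l(a-s(x))}\cdot 2^{(k+l)s(x)}(\Psi_{k+l}^*f)_a(x)$, and the series over $l\ge 1$ converges only if $a<s^-$. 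But the Peetre maximal operator is bounded on $\ellpq$ only for $a>\tfrac{n}{\min(p^-,q^-)}+c_{\log}(s)$, and these two constraints are incompatible in general. This is exactly why the paper's Step~4 introduces the splitting $|\Delta^M_h f_{(k+l)}|=|\Delta^M_h f_{(k+l)}|^{1-\lambda}|\Delta^M_h f_{(k+l)}|^{\lambda}$, estimating the first factor by $\bigl(2^{la}(\Psi_{k+l}^*f)_a\bigr)^{1-\lambda}$ and the second, via the $r$-trick (Lemma~\ref{lem:r-trick}) and Lemma~\ref{lem:conv}, by a genuine convolution $\eta_{k,m}*|2^{(k+l)s(\cdot)}f_{(k+l)}|^\lambda$; condition \eqref{eq:thm:cond1} is precisely what allows one to choose $\lambda<\min(p^-,q^-)$ and $a$ so that $a(1-\lambda)<s^-$ while Lemma~\ref{lem:ConvDHR} (resp.\ Lemma~\ref{Lemma_4}) still applies to $\p/\lambda,\q/\lambda$.

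Conversely, the right-hand inequality $\|f|\Fspqpunkt\|\lesssim\|f|\Fspqpunkt\|^{**}$, which you treat as the hard part, is in the paper much more direct: one builds the resolution of unity from $\varphi_0(x)=(-1)^{M+1}\sum_{\mu=0}^{M-1}(-1)^\mu\binom{M}{\mu}\psi((M-\mu)x)$, so that $(\IFT\varphi_j\FT f)(x)$ is \emph{pointwise} controlled by $\int_{\R^n}|\hat\psi(h)|\,|\Delta^M_{2^{-j}h}f(x)|\,dh$; splitting $\R^n$ into dyadic shells and using the rapid decay of $\hat\psi$ reduces this to a discrete convolution in the index, and Lemma~\ref{_LM_Lemma2} closes the estimate with no need for any $r$-trick, $\eta$-convolution, or condition \eqref{eq:thm:cond1}. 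Your reproducing-formula sketch for this direction is not wrong in spirit, but it imports machinery that belongs to the other inequality.
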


\begin{proof}

\emph {Part I.} First we prove \eqref{eq:todo1} and \eqref{eq:todo2}.
We discretize the inner part of $\|\cdot\|^*$ and obtain
{\allowdisplaybreaks
\begin{align}
\notag \biggl[\int_0^\infty& t^{-s(x)q(x)}\left(\int_B |\Delta^M_{th}f(x)|dh\right)^{q(x)}\frac{dt}{t}\biggr]^{1/q(x)}\\
\label{eq:diff:F1}&= \biggl[\int_0^\infty t^{-s(x)q(x)}\left(t^{-n}\int_{tB}|\Delta_\varkappa^M f(x)|d\varkappa\right)^{q(x)}\frac{dt}{t}\biggr]^{1/q(x)}\\
\notag &= \biggl[\sum_{k=-\infty}^\infty \int_{2^{-k-1}}^{2^{-k}} t^{-s(x)q(x)}\left(t^{-n}\int_{tB}|\Delta_\varkappa^M f(x)|d\varkappa\right)^{q(x)}\frac{dt}{t}\biggr]^{1/q(x)}.
\end{align}
If $2^{-k-1}\le t\le 2^{-k}$, then $2^{ks(x)q(x)}\le t^{-s(x)q(x)}\le 2^{(k+1)s(x)q(x)}$ and
$$
2^{kn}\int_{2^{-(k+1)}B}|\Delta_\varkappa^M f(x)|d\varkappa\lesssim
t^{-n}\int_{tB}|\Delta_\varkappa^M f(x)|d\varkappa \lesssim
2^{(k+1)n}\int_{2^{-k}B}|\Delta_\varkappa^M f(x)|d\varkappa.
$$
Plugging these estimates into \eqref{eq:diff:F1}, we may further estimate  
\begin{align*}
\biggl[\int_0^\infty& t^{-s(x)q(x)}\left(\int_B |\Delta^M_{th}f(x)|dh\right)^{q(x)}\frac{dt}{t}\biggr]^{1/q(x)}\\
&\lesssim \biggl[\sum_{k=-\infty}^\infty 2^{(k+1)s(x)q(x)}\left(2^{kn}\int_{2^{-k}B}|\Delta_\varkappa^M f(x)|d\varkappa\right)^{q(x)}\biggr]^{1/q(x)}\\
&\lesssim\biggl[\sum_{k=-\infty}^\infty 2^{ks(x)q(x)}\left(\int_{B}|\Delta_{2^{-k}\varkappa}^M f(x)|d\varkappa\right)^{q(x)}\biggr]^{1/q(x)}.
\end{align*}}
The estimate from below follows in the same manner.
Finally, the proof of \eqref{eq:todo2} is almost the same.

\emph{Part II.} This part is devoted to the proof of the left hand side of \eqref{eq:todo3}. It is divided into
several steps to make the presentation clearer. 

\emph{Step 1.} First, we point out that the estimate
$$
\norm{f}{L_\p(\R^n)}\lesssim \norm{f}{\Bspqpunkt}
$$
follows from the characterization of $\Bspqpunkt$ in terms of Nikol'skij representations (cf. Theorem 8.1 of \cite{AlmeidaHasto}).
We refer also to Remark 2.5.3/1 in \cite{Triebel1}. The extension to $F$-spaces is then given by the simple embedding
$$
\norm{f}{L_\p(\R^n)}\lesssim \norm{f}{B^{s(\cdot)-\varepsilon}_{p(\cdot),p(\cdot)}(\R^n)}\lesssim
\norm{f}{\Fspqpunkt}
$$
with $\varepsilon>0$ chosen small enough.

\emph{Step 2.} Let $(\varphi_j)_{j\in\N_0}$ be the functions used in Definition \ref{def:BFpunkt}.
We use the decomposition
$$
f=\sum_{l=-\infty}^\infty f_{(k+l)},\quad k\in\Z,
$$
where $f_{(k+l)}=(\varphi_{k+l}\hat f)^\vee$, or $=0$ if $k+l<0$ and get
\begin{align*}
(\clubsuit)&:=\sum_{k=0}^\infty 2^{ks(x)q(x)}\left(\int_{B}|\Delta_{2^{-k}h}^M f(x)|dh\right)^{q(x)}\\
&=\sum_{k=0}^\infty 2^{ks(x)q(x)}\left(\int_{B}|\Delta_{2^{-k}h}^M \left(\sum_{l=-\infty}^\infty f_{(k+l)}\right)(x)|dh\right)^{q(x)}.
\end{align*}
If $q(x)\le 1$ then we proceed further
\begin{align*}
(\clubsuit)&\le \sum_{k=0}^\infty 2^{ks(x)q(x)}\left(\int_{B}\sum_{l=-\infty}^\infty|\Delta_{2^{-k}h}^M f_{(k+l)}(x)|dh\right)^{q(x)}\\
&\le \sum_{k=0}^\infty \sum_{l=-\infty}^\infty 2^{ks(x)q(x)}\left(\int_{B}|\Delta_{2^{-k}h}^M f_{(k+l)}(x)|dh\right)^{q(x)}.
\end{align*}
If $q(x)>1$, we use Minkowski's inequality
\begin{align*}
(\clubsuit)^{1/q(x)}&\le \left(\sum_{k=0}^\infty 2^{ks(x)q(x)}\left(\int_{B}\sum_{l=-\infty}^\infty|\Delta_{2^{-k}h}^M f_{(k+l)}(x)|dh\right)^{q(x)}\right)
^{1/q(x)}\\
&\le \sum_{l=-\infty}^\infty \left(\sum_{k=0}^\infty 2^{ks(x)q(x)}\left(\int_{B}|\Delta_{2^{-k}h}^M f_{(k+l)}(x)|dh\right)^{q(x)}\right)^{1/q(x)}.
\end{align*}

We split in both cases
\begin{equation}\label{eq:proof1}
\sum_{l=-\infty}^\infty\dots=I+II=\sum_{l=-\infty}^0 \dots + \sum_{l=1}^\infty \dots
\end{equation}

\emph{Step 3.} We estimate the first summand with $l\le 0$.

We use Lemma \ref{lem:dif_Peetre} in the form
$$
|\Delta^M_h f_{(k+l)}(x)|\le C\max(1,|bh|^a)\cdot\min(1,|bh|^M)P_{b,a}f_{(k+l)}(x),
$$
where $a>0$ is arbitrary, $b=2^{k+l}$ and
$$
P_{b,a}f(x)=\sup_{z\in\R^n}\frac{|f(x-z)|}{1+|bz|^a}.
$$
Furthermore, we use this estimate with $2^{-k}h$ instead of $h$. We obtain
\begin{align}
\notag\int_B|\Delta^M_{2^{-k}h}f_{(k+l)}(x)|dh&\lesssim \int_B \max(1,|b2^{-k}h|^a)\cdot
\min(1,|b2^{-k}h|^M) P_{b,a}f_{(k+l)}(x)dh\\
&\label{eq:dif1}\lesssim 2^{lM}P_{2^{k+l},a}f_{(k+l)}(x).
\end{align}
The last inequality follows from $\max(1,|b2^{-k}h|^a)\le 1$ (recall that $l\le 0$ and $|h|\le 1$) and
$\min(1,|b2^{-k}h|^M)\le 2^{lM}$.

If $q(x)\le 1$, we estimate the first sum in \eqref{eq:proof1}
\begin{align*}
I&\le\sum_{l=-\infty}^0 \sum_{k=0}^\infty 2^{ks(x)q(x)}\left(\int_{B}|\Delta_{2^{-k}h}^M f_{(k+l)}(x)|dh\right)^{q(x)}\\
&\lesssim \sum_{l=-\infty}^0 \sum_{k=0}^\infty 2^{ks(x)q(x)}\left(2^{lM}P_{2^{k+l},a}f_{(k+l)}(x)\right)^{q(x)}\\
&= \sum_{l=-\infty}^0 2^{l(M-s(x))q(x)}\sum_{k=0}^\infty 2^{(k+l)s(x)q(x)}P^{q(x)}_{2^{k+l},a}f_{(k+l)}(x)\\
&\approx \sum_{k=0}^\infty 2^{ks(x)q(x)}P^{q(x)}_{2^{k},a}f_{(k)}(x),
\end{align*}
where the last estimate makes use of $M>s^+$, $q^->0$ and the fact that $f_{(k+l)}=0$ for $k+l<0.$

If $q(x)>1$, we proceed in a similar way to obtain
\begin{align*}
I^{1/q(x)}&\le \sum_{l=-\infty}^0 \left(\sum_{k=0}^\infty 2^{ks(x)q(x)}\left(\int_{B}|\Delta_{2^{-k}h}^M f_{(k+l)}(x)|dh\right)^{q(x)}\right)^{1/{q(x)}}\\
&\lesssim \sum_{l=-\infty}^0 \left(\sum_{k=0}^\infty 2^{ks(x)q(x)}\left(2^{lM}P_{2^{k+l},a}f_{(k+l)}(x)\right)^{q(x)}\right)^{1/{q(x)}}\\
&= \sum_{l=-\infty}^0 2^{l(M-s(x))}\left(\sum_{k=0}^\infty 2^{(k+l)s(x)q(x)}P^{q(x)}_{2^{k+l},a}f_{(k+l)}(x)\right)^{1/{q(x)}}\\
&\lesssim \left(\sum_{k=0}^\infty 2^{ks(x)q(x)}P^{q(x)}_{2^{k},a}f_{(k)}(x)\right)^{1/q(x)}.
\end{align*}
We have used in the last estimate again $M>s^+$ and the definition of $f_{(k+l)}$.

Hence,
$$
I^{1/q(x)}\lesssim\left(\sum_{k=0}^\infty 2^{ks(x)q(x)}P^{q(x)}_{2^{k},a}f_{(k)}(x)\right)^{1/q(x)}
$$
holds for all $x\in\R^n$. 

Finally, we obtain
\begin{align}
\notag \|I^{1/q(\cdot)}|L_{p(\cdot)}(\R^n)\|&\lesssim \left\|\left(\sum_{k=0}^\infty 2^{ks(x)q(x)}P^{q(x)}_{2^{k},a}f_{(k)}(x)\right)^{1/q(x)}\bigg|L_{p(\cdot)}(\R^n)\right\|\\
\label{eq:fin1}&=\left\|\left(2^{ks(x)}P_{2^{k},a}f_{(k)}(x)\right)_{k=0}^\infty|L_{p(\cdot)}(\ell_{q(\cdot)})\right\|\\
\notag &\lesssim \left\|\left(2^{ks(\cdot)}f_{(k)}\right)_{k=0}^\infty|L_{p(\cdot)}(\ell_{q(\cdot)})\right\|=\|f|\Fspqpunkt\|,
\end{align}
where we used the boundedness of Peetre maximal operator as described in Theorem \ref{thm:sx:loc1}
for $a>0$ large enough.

\emph{Step 4.} We estimate the second summand in \eqref{eq:proof1} with $l>0$.
If $\min(p^-,q^-)>1$, then we put $\lambda=1$. Otherwise we
choose real parameters $0<\lambda<\min(p^-,q^-)$ and $a>0$ such that 
$$
a>\frac{n}{\min(p^-,q^-)}+c_{\rm log}(s)
$$
and $a(1-\lambda)<s^-$. Due to \eqref{eq:thm:cond1}, this is always possible.

We start again with estimates of the ball means of differences. We use Lemma \ref{lem:dif_Peetre} and \eqref{eq:dif3}
to obtain
\begin{align}
\notag \int_B&|\Delta^M_{2^{-k}h}f_{(k+l)}(x)|dh=\int_B|\Delta^M_{2^{-k}h}f_{(k+l)}(x)|^\lambda\cdot 
|\Delta^M_{2^{-k}h}f_{(k+l)}(x)|^{1-\lambda} dh\\
\label{eq:dif6}
\begin{split}
	&\lesssim \int_B\Bigl(\max(1,|2^{k+l}2^{-k}h|^a)\min(1,|2^{k+l}2^{-k}h|^M)P_{2^{k+l},a}f_{(k+l)}(x)\Bigr)^{1-\lambda}\cdot\\
	&\qquad\qquad \cdot |\Delta^M_{2^{-k}h}f_{(k+l)}(x)|^{\lambda}dh
\end{split}\\
\notag&\le \left(2^{la}P_{2^{k+l},a}f_{(k+l)}(x)\right)^{1-\lambda} \int_B|\Delta^M_{2^{-k}h}f_{(k+l)}(x)|^{\lambda}dh\\
\notag&\le \left(2^{la}P_{2^{k+l},a}f_{(k+l)}(x)\right)^{1-\lambda}\sum_{j=0}^M c_{j,M}\int_B|f_{(k+l)}(x+j2^{-k}h)|^\lambda dh,
\end{align}
where the constants $c_{j,M}$ are given by \eqref{eq:dif3}.

We shall deal in detail only with the term with $j=1$. The term with $j=0$ is much simpler to handle (as there the integration
over $h\in B$ immediately disappears) and this case reduces essentially to H\"older's inequality and boundedness of the Peetre
maximal operator. The terms with $2\le j \le M$ may be handled in the same way as the one with $j=1.$

We use Lemma \ref{lem:r-trick} with $r=\lambda$ in the form
$$
|f_{(k+l)}(y)|^\lambda \lesssim (\eta_{k+l,2m}*|f_{(k+l)}|^\lambda)(y),
$$
with $m>\max(n,c_{\rm log}(s))$, Lemma \ref{lem:conv} and Lemma \ref{Lem:eta1} to get
{\allowdisplaybreaks
\begin{align}
2^{ks(x)\lambda}\notag\int_B &|f_{(k+l)}(x+2^{-k}h)|^\lambda dh\\
\notag&\lesssim 2^{ks(x)\lambda}\int_B (\eta_{k+l,2m}*|f_{(k+l)}|^\lambda)(x+2^{-k}h) dh\\
&= 2^{ks(x)\lambda} ([2^{kn}\chi_{2^{-k}B} ] *\eta_{k+l,2m}*|f_{(k+l)}|^\lambda)(x)\label{eq:dif2}\\
\notag&\lesssim 2^{ks(x)\lambda}(\eta_{k,2m}*|f_{(k+l)}|^\lambda)(x)\\
\notag&\lesssim (\eta_{k,m}*|2^{ks(\cdot)}f_{(k+l)}|^\lambda)(x)\\
\notag&\le 2^{-ls^-\lambda}(\eta_{k,m}*|2^{(k+l)s(\cdot)}f_{(k+l)}|^\lambda)(x).
\end{align}}
We insert \eqref{eq:dif2} into \eqref{eq:dif6} and arrive at
\begin{align}\label{eq:dif8}
2^{ks(x)}\int_B&|\Delta^M_{2^{-k}h}f_{(k+l)}(x)|dh\\
&\notag\lesssim 2^{la(1-\lambda)-ls^-}\left(2^{(k+l)s(x)}P_{2^{k+l},a}f_{(k+l)}(x)\right)^{1-\lambda}
(\eta_{k,m}*|2^{(k+l)s(\cdot)}f_{(k+l)}|^\lambda)(x).
\end{align}

If $q(x)>1$, we proceed further with the use of H\"older's inequality
\begin{align*}
II^{1/q(x)}&\lesssim \sum_{l=1}^\infty 2^{la(1-\lambda)-ls^-}\\
&\left(\sum_{k=0}^\infty \left(2^{(k+l)s(x)}P_{2^{k+l},a}f_{(k+l)}(x)\right)^{(1-\lambda)q(x)}
(\eta_{k,m}*|2^{(k+l)s(\cdot)}f_{(k+l)}|^\lambda)^{q(x)}(x)\right)^{1/q(x)}\\
&\le \sum_{l=1}^\infty 2^{la(1-\lambda)-ls^-}
\left(\sum_{k=0}^\infty \left(2^{(k+l)s(x)}P_{2^{k+l},a}f_{(k+l)}(x)\right)^{q(x)}\right)^{(1-\lambda)/q(x)}\cdot\\
&\qquad\qquad\cdot\left(\sum_{k=0}^\infty(\eta_{k,m}*|2^{(k+l)s(\cdot)}f_{(k+l)}|^\lambda)^{q(x)/\lambda}(x)\right)^{\lambda/q(x)}\\
&=\left(\sum_{k=0}^\infty \left(2^{ks(x)}P_{2^{k},a}f_{(k)}(x)\right)^{q(x)}\right)^{(1-\lambda)/q(x)}\cdot\\
&\qquad\qquad\cdot \sum_{l=1}^\infty 2^{la(1-\lambda)-ls^-}\left(\sum_{k=0}^\infty(\eta_{k,m}*|2^{(k+l)s(\cdot)}f_{(k+l)}|^\lambda)^{q(x)/\lambda}(x)\right)^{\lambda/q(x)}.
\end{align*}

If $q(x)\le 1$, we obtain in a similar way
{\allowdisplaybreaks\begin{align*}
II&\lesssim \sum_{l=1}^\infty 2^{(la(1-\lambda)-ls^-)q(x)}\\
&\qquad\sum_{k=0}^\infty \left(2^{(k+l)s(x)}P_{2^{k+l},a}f_{(k+l)}(x)\right)^{(1-\lambda)q(x)}
(\eta_{k,m}*|2^{(k+l)s(\cdot)}f_{(k+l)}|^\lambda)^{q(x)}(x)\\
&\le \sum_{l=1}^\infty 2^{(la(1-\lambda)-ls^-)q(x)}
\left(\sum_{k=0}^\infty \left(2^{(k+l)s(x)}P_{2^{k+l},a}f_{(k+l)}(x)\right)^{q(x)}\right)^{1-\lambda}\cdot\\
&\qquad\qquad\cdot\left(\sum_{k=0}^\infty(\eta_{k,m}*|2^{(k+l)s(\cdot)}f_{(k+l)}|^\lambda)^{q(x)/\lambda}(x)\right)^\lambda\\
&=\left(\sum_{k=0}^\infty \left(2^{ks(x)}P_{2^{k},a}f_{(k)}(x)\right)^{q(x)}\right)^{1-\lambda}\cdot\\
&\qquad\qquad\cdot\sum_{l=1}^\infty 2^{(la(1-\lambda)-ls^-)q(x)}
\left(\sum_{k=0}^\infty(\eta_{k,m}*|2^{(k+l)s(\cdot)}f_{(k+l)}|^\lambda)^{q(x)/\lambda}(x)\right)^\lambda
\end{align*}}
and further (with use of Lemma \ref{lem:H})
\begin{align*}
II^{1/q(x)}&\lesssim \left(\sum_{k=0}^\infty \left(2^{ks(x)}P_{2^{k},a}f_{(k)}(x)\right)^{q(x)}\right)^{(1-\lambda)/q(x)}\cdot\\
&\qquad\qquad\cdot\left(\sum_{l=1}^\infty 2^{(la(1-\lambda)-ls^-)q(x)}
\left(\sum_{k=0}^\infty(\eta_{k,m}*|2^{(k+l)s(\cdot)}f_{(k+l)}|^\lambda)^{q(x)/\lambda}(x)\right)^\lambda\right)^{1/q(x)}\\
&\lesssim \left(\sum_{k=0}^\infty \left(2^{ks(x)}P_{2^{k},a}f_{(k)}(x)\right)^{q(x)}\right)^{(1-\lambda)/q(x)}\cdot\\
&\qquad\qquad\cdot\sum_{l=1}^\infty 2^{1/2\cdot(la(1-\lambda)-ls^-)}
\left(\sum_{k=0}^\infty(\eta_{k,m}*|2^{(k+l)s(\cdot)}f_{(k+l)}|^\lambda)^{q(x)/\lambda}(x)\right)^{\lambda/q(x)}.
\end{align*}
If we denote 
$$
F(x):=\left(\sum_{k=0}^\infty \left(2^{ks(x)}P_{2^{k},a}f_{(k)}(x)\right)^{q(x)}\right)^{1/q(x)},\quad x\in\R^n
$$
and
$$
B_{k+l}(x):=|2^{(k+l)s(x)}f_{(k+l)}(x)|,\quad x\in\R^n
$$
we get for $\delta:=-1/2\cdot(a(1-\lambda)-s^-)>0$
\begin{equation}\label{eq:dif10}
II^{1/q(x)}\lesssim F(x)^{1-\lambda}\cdot \sum_{l=1}^\infty 2^{-l\delta}
\left(\sum_{k=0}^\infty(\eta_{k,m}*B_{k+l}^\lambda)^{q(x)/\lambda}(x)\right)^{\lambda/q(x)}.
\end{equation}
We use $\|F_1^{1-\lambda}F_2^\lambda\|_\p\le 2\|F_1\|_\p^{1-\lambda}\|F_2\|_\p^\lambda$, cf. \cite[Lemma 3.2.20]{DHHR},
and suppose that the $L_\p-$ (quasi-)norm is equivalent to an $r$-norm with $0<r\le 1$. Together with Lemma \ref{lem:ConvDHR} we arrive at
{\allowdisplaybreaks\begin{align}
\|II^{1/q(x)}\|^r_\p&\lesssim \|F(x)\|_{\p}^{(1-\lambda)r}\cdot
\left\|\sum_{l=1}^\infty 2^{-l\delta}
\left(\sum_{k=0}^\infty(\eta_{k,m}*B_{k+l}^\lambda)^{q(x)/\lambda}(x)\right)^{1/q(x)}\right\|_\p^{\lambda r}\notag\\
&\lesssim \|F(x)\|_{\p}^{(1-\lambda)r}\cdot
\sum_{l=1}^\infty 2^{-l\delta r}
\left\|\left(\sum_{k=0}^\infty(\eta_{k,m}*B_{k+l}^\lambda)^{q(x)/\lambda}(x)\right)^{1/q(x)}\right\|_\p^{\lambda r}\notag\\
&\lesssim \|f|\Fspqpunkt\|^{(1-\lambda)r} \cdot \sum_{l=1}^\infty 2^{-l\delta r}
\left\| (\eta_{k,m}*B_{k+l}^\lambda(x))_{k=0}^\infty\right\|^r_{L_{p(\cdot)/\lambda}(\ell_{q(\cdot)/\lambda})}\notag\\
&\lesssim \|f|\Fspqpunkt\|^{(1-\lambda)r} \cdot \sum_{l=1}^\infty 2^{-l\delta r}
\left\| (B_{k+l}^\lambda(x))_{k=0}^\infty\right\|^r_{L_{p(\cdot)/\lambda}(\ell_{q(\cdot)/\lambda})}\label{eq:diffetaf}\\
&\lesssim \|f|\Fspqpunkt\|^{(1-\lambda)r} \cdot \sum_{l=1}^\infty 2^{-l\delta r}
\left\| (B_{k}^\lambda(x))_{k=0}^\infty\right\|^r_{L_{p(\cdot)/\lambda}(\ell_{q(\cdot)/\lambda})}\notag\\
&\lesssim \|f|\Fspqpunkt\|^{(1-\lambda)r} \cdot
\left\| B_{k}(x)\right\|_{L_{p(\cdot)}(\ell_{q(\cdot)})}^{\lambda r}\notag\\
&\lesssim \|f|\Fspqpunkt\|^r,\notag
\end{align}}
which finishes the proof.

\emph{Part III.}: We prove the right hand side of \eqref{eq:todo3}.
We follow again essentially \cite[Section 2.5.9]{Triebel1} with some modifications as presented in
\cite{U}. Roughly speaking, compared to the case of constant exponents, only minor modifications
are necessary.

Let $\psi \in C^\infty_0(\R^n)$ with $\psi(x)=1, |x|\le 1$ and $\psi(x)=0, |x|>3/2$. We define
$$
\varphi_0(x)=(-1)^{M+1}\sum_{\mu=0}^{M-1}(-1)^\mu\binom{M}{\mu}\psi((M-\mu)x).
$$
It follows that $\varphi_0\in C^\infty_0(\R^n)$ with $\varphi(x)=0, |x|>3/2$ and $\varphi(x)=1, |x|<1/M$.
We also put $\varphi_j(x)=\varphi_0(2^{-j}x)-\varphi_0(2^{-j+1}x)$ for $j\ge 1.$ This is the decomposition of unity
we used in the definition of $\|f|\Fspqpunkt\|$, cf. Definition \ref{def:BFpunkt}. Recall that due to
\cite{DHR} and \cite{Kempka09}, this (quasi-)norm of $\|f|\Fspqpunkt\|$ does not depend on the choice of the decomposition of unity.

We observe that
$$
\varphi_0(x)=(-1)^{M+1}(\Delta_x^M\psi(0)-(-1)^M),
$$
and
\begin{equation}
\label{eq:diff:rev1}
(\IFT \varphi_j\FT f)(x)=\begin{cases} (\IFT\Delta^M_\xi\psi(0)\FT f)(x)+(-1)^{M+1}f(x),\quad &j=0,\\
(\IFT(\Delta^M_{2^{-j}\xi}\psi(0)-\Delta^M_{2^{-j+1}\xi}\psi(0))\FT f)(x),\quad &j\ge 1.
\end{cases}
\end{equation}
Furthermore, a straightforward calculation shows that
\begin{align}
\notag|(\IFT(\Delta_{2^{-j}\xi}^M\psi(0))\FT f)(x)|&=\left|\sum_{u=0}^M(-1)^u\IFT[\psi((M-u)2^{-j}\cdot)\FT f](x)\right|\\
&\label{eq:diff:rev2}\approx \left|\sum_{u=0}^M(-1)^u\IFT[\psi((M-u)2^{-j}\cdot)]* f(x)\right|\\
\notag&\approx \left|\sum_{u=0}^M(-1)^u\int_{\R^n}\IFT\psi(h) f(x-(M-u)2^{-j}h)dh\right|\\
\notag&= \left|\int_{\R^n} \hat \psi(h) \Delta^M_{2^{-j}h}f(x)dh\right|
\le \int_{\R^n} |\hat \psi(h)|\cdot| \Delta^M_{2^{-j}h}f(x)|dh
\end{align}
holds for every $j\in\N_0$. We denote $g=\hat \psi\in \Sn$ and obtain
\begin{align}
\label{eq:dif7}\|f|\Fspqpunkt\|&\approx\|2^{js(x)}(\IFT \varphi_j \FT f)(x)|L_{p(\cdot)}(\ell_{q(\cdot})\|\\
\notag&\lesssim \norm{f}{L_\p(\R^n)}+\left\|2^{js(x)}\int_{\R^n}|g(h)|\cdot |\Delta^M_{2^{-j}h}
f(x)|dh|L_{p(\cdot)}(\ell_{q(\cdot)})\right\|.
\end{align}
The rest of this part consists essentially of using the property of $g\in \Sn$ to come from
\eqref{eq:dif7} to $\|\cdot\|^{**}$.

We denote 
$$
I_0:=B, \quad I_u:=2^uB\setminus 2^{u-1}B,\quad u\in\N
$$
and use $|g(h)|\le c2^{-ur}, h\in I_u$ with $r$ taken large enough (recall that $g\in \Sn$)
and estimate
\begin{align}
\notag\int_{\R^n}|g(h)|\cdot |\Delta^M_{2^{-j}h}f(x)|dh
&=\sum_{u=0}^\infty \int_{I_u}|g(h)|\cdot |\Delta^M_{2^{-j}h}f(x)|dh\\
\label{eq:diff:rev3}&\lesssim\sum_{u=0}^\infty 2^{-ur}2^{jn} \int_{2^{u-j}B}|\Delta^M_{h}f(x)|dh\\
\notag&=\sum_{u=0}^\infty 2^{u(n-r)}2^{-(u-j)n} \int_{2^{u-j}B}|\Delta^M_{h}f(x)|dh.
\end{align}

We put
$$
 G_j(x):=2^{js(x)}|(\IFT(\Delta_{2^{-j}\xi}^M\psi(0))\FT f)(x)|,\quad j\in\N_0
$$
and
$$
g_k(x):=2^{ks(x)}2^{kn}\int_{2^{-k}B} |\Delta^M_{h}f(x)|dh,\quad k\in\Z.
$$
Using \eqref{eq:diff:rev1}, \eqref{eq:diff:rev2} and \eqref{eq:diff:rev3}, we obtain the estimate
\begin{align}
\notag G_j(x)&\lesssim 2^{js(x)}\sum_{u=0}^\infty 2^{u(n-r)}2^{-(u-j)n} \int_{2^{u-j}B}|\Delta^M_{h}f(x)|dh\\
\label{eq:G1}&=\sum_{k=-\infty}^j 2^{(j-k)s(x)} 2^{(j-k)(n-r)}2^{ks(x)}2^{kn}\int_{2^{-k}B} |\Delta^M_{h}f(x)|dh\\
\notag &=\sum_{k=-\infty}^j 2^{(j-k)(s(x)+n-r)}g_k(x)\le \sum_{k=-\infty}^\infty 2^{|j-k|\cdot(s(x)+n-r)}g_k(x).
\end{align}
Choosing $r>s^++n$ and applying Lemma \ref{_LM_Lemma2} then finishes the proof.

\emph{Part IV.} The proof of the left hand side of \eqref{eq:todo4} 
follows in the same manner as in Part II. We shall describe the necessary
modifications. First, let us mention, that the condition $q^+<\infty$ was used only in the application of Lemma \ref{lem:ConvDHR}.
In the rest of the arguments also the case $q(x)=\infty$ may be incorporated with only slight change of notation.

Let us put
$$
f^{(k)}(x):=2^{ks(x)}\int_B |\Delta^M_{2^{-k}h}f(x)|dh, \quad x\in\R^n.
$$
We obtain (in analogue to \eqref{eq:proof1})
\begin{align*}
f^{(k)}&\le f^{(k),I}+f^{(k),II}:=\sum_{l=-\infty}^0 2^{ks(x)}\int_B |\Delta^M_{2^{-k}h}f_{(k+l)}(x)|dh\\
&\qquad+\sum_{l=1}^\infty 2^{ks(x)}\int_B |\Delta^M_{2^{-k}h}f_{(k+l)}(x)|dh.
\end{align*}
We estimate the first sum using \eqref{eq:dif1} and get
$$
f^{(k),I}\lesssim \sum_{l=-\infty}^0 2^{l(M-s^+)}g^1_{k+l}=\sum_{u=0}^k 2^{(u-k)(M-s^+)}g^1_u\le \sum_{u=0}^\infty 2^{-|u-k|(M-s^+)}g^1_u,
$$
where $g^1_{u}:=2^{us(x)}P_{2^{u},a}f_{(u)}(x)$. The application of Lemma \ref{_LM_Lemma2} 
and Theorem \ref{thm:sx:loc1} with $a>0$ large enough gives
\begin{align*}
\|\left(f^{(k),I}\right)_{k=0}^\infty|\ellqp\|\lesssim \|\left(g_u\right)_{u=0}^{\infty}|\ellqp\|\lesssim \|f|\Bspqpunkt\|.
\end{align*}

To estimate $f^{(k),II}$, we proceed as in the Step 4 of Part II.
If $p^->1$, we choose again $\lambda=1$, otherwise we take $0<\lambda<p^-$ and 
$$
a>\frac{n+c_{\rm log}(1/q)}{p^-}+c_{\rm log}(s)
$$
such that $a(1-\lambda)<s^-$. This is possible due to \eqref{eq:thm:cond2}.

We use \eqref{eq:dif2} with $m>\max(n+c_{\log}(1/q),c_{\log}(s))$ to get
\begin{align}\label{eq:dif11}
\begin{split}
	f^{(k),II}&\lesssim \sum_{l=1}^\infty 2^{la(1-\lambda)-ls^-}\left(2^{(k+l)s(x)}P_{2^{k+l},a}f_{(k+l)}(x)\right)^{1-\lambda}\cdot\\
&\qquad\qquad\qquad\qquad\quad\cdot(\eta_{k,m}*|2^{(k+l)s(\cdot)}f_{(k+l)}(\cdot)|^\lambda)(x)\\
\end{split}\\
\notag&=\sum_{l=1}^\infty 2^{la(1-\lambda)-ls^-}(g^1_{k+l}(x))^{1-\lambda}\cdot(\eta_{k,m}*(g^2_{k+l})^\lambda)(x),
\end{align}
where $g^2_{k+l}(x):=|2^{(k+l)s(x)}f_{(k+l)}(x)|$. We take the $\ellqp$ (quasi-)norm of the last expression - and assume that
it is equivalent to some $r$-norm. This gives for $\delta:=s^--a(1-\lambda)>0$ the following estimate
{\allowdisplaybreaks\begin{align}
\|f^{(k),II}&|\ellqp\|^r\lesssim \sum_{l=1}^\infty 2^{-l\delta r}\|(g^1_{k+l}(x))^{1-\lambda}\cdot(\eta_{k,m}*(g^2_{k+l})^\lambda)(x)|\ellqp\|^r\notag\\
&\lesssim \sum_{l=1}^\infty 2^{-l\delta r}\|g^1_{k+l}|\ellqp\|^{(1-\lambda)r}\cdot \|[\eta_{k,m}*(g^2_{k+l})^\lambda]^{1/\lambda}|\ellqp\|^{\lambda r}\notag\\
&\lesssim \sum_{l=1}^\infty 2^{-l\delta r}\|g^1_{k}|\ellqp\|^{(1-\lambda)r} \cdot \|\eta_{k,m}*(g^2_{k+l})^\lambda|\ell_{\q/\lambda}(L_{\p/\lambda})\|^{ r}\notag\\
&\lesssim \sum_{l=1}^\infty 2^{-l\delta r}\|g^1_{k}|\ellqp\|^{(1-\lambda)r} \cdot \|(g^2_{k+l})^\lambda|\ell_{\q/\lambda}(L_{\p/\lambda})\|^{ r}\label{eq:diffetab}\\
&\lesssim \sum_{l=1}^\infty 2^{-l\delta r}\|g^1_{k}|\ellqp\|^{(1-\lambda)r} \cdot \|g^2_{k+l}|\ellqp\|^{\lambda r}\notag\\
&\lesssim \|f|\Bspqpunkt\|^r.\notag
\end{align}}
We have used Lemma \ref{Lemma_4} and Lemma \ref{lem:Hoelder}.

\emph{Part V.} The right hand side inequality of \eqref{eq:todo4} follows also along the same line as in Part III.
We just combine \eqref{eq:G1} with the choice $r>s^++n$ and apply Lemma \ref{_LM_Lemma2}.
\end{proof}

\subsection{Proof of Theorem \ref{thm:dif}}\label{Sec:Diff:q}

This section is devoted to the proof of Theorem \ref{thm:dif}. We start with the case of constant $q$.
In that case, the usual Hardy-Littlewood maximal operator
$$
Mf(x)=\sup_{r>0}\frac{1}{|B(x,r)|}\int_{B(x,r)}|f(y)|dy
$$
is bounded on $\ell_q(L_{p(\cdot)})$ and $L_{p(\cdot)}(\ell_q)$. Indeed, the following lemma is a consequence of \cite{CUFMP}
and \cite[Theorem 4.3.8]{DHHR}.
\begin{lem}\label{Lem:Max}
(i) Let $p\in \Plog$ with $1<p^-\le p^+<\infty$ and $1<q<\infty$.
Then
$$
\norm{(M f_j)_{j=-\infty}^\infty}{L_{p(\cdot)}(\ell_q)}\lesssim \norm{(f_j)_{j=-\infty}^\infty}{L_{p(\cdot)}(\ell_q)}
$$
for all $(f_j)_{j=-\infty}^\infty\in L_{p(\cdot)}(\ell_q)$.

(ii) Let $p\in \Plog$ with $p^->1$ and $0<q\le \infty$. Then
$$
\norm{(M f_j)_{j=-\infty}^\infty}{\ell_q(L_{p(\cdot)})}\lesssim \norm{(f_j)_{j=-\infty}^\infty}{\ell_q(L_{p(\cdot)})}
$$
for all $(f_j)_{j=-\infty}^\infty\in \ell_q(L_{p(\cdot)})$.
\end{lem}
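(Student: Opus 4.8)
The plan is to deduce both parts from known mapping properties of the scalar Hardy--Littlewood maximal operator on variable Lebesgue spaces: part (ii) follows by a direct coordinate-wise iteration, whereas part (i) requires the vector-valued (Fefferman--Stein type) maximal inequality in the variable exponent setting. For part (ii) I would first recall that, under the hypotheses $p\in\Plog$ with $p^->1$, the operator $M$ is bounded on $L_{p(\cdot)}(\R^n)$; this is the fundamental result of \cite{Diening04}, extended to the present class of exponents in \cite{CruzUribe03}, \cite{Nekvinda04} and \cite{DieningHHMS09} (see also \cite[Theorem 4.3.8]{DHHR}). Denoting by $C>0$ the corresponding operator norm, we have $\norm{Mf_j}{L_{p(\cdot)}(\R^n)}\le C\norm{f_j}{L_{p(\cdot)}(\R^n)}$ for every $j\in\Z$. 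Since by definition $\norm{(g_j)_j}{\ell_q(L_{p(\cdot)})}$ equals the $\ell_q$-quasi-norm of the nonnegative scalar sequence $\big(\norm{g_j}{L_{p(\cdot)}(\R^n)}\big)_j$, and since that quasi-norm is monotone with respect to the coordinate-wise order, applying the scalar estimate coordinate-wise and then taking the $\ell_q$-quasi-norm immediately gives
\begin{align*}
\norm{(Mf_j)_{j=-\infty}^\infty}{\ell_q(L_{p(\cdot)})}\le C\,\norm{(f_j)_{j=-\infty}^\infty}{\ell_q(L_{p(\cdot)})}
\end{align*}
for every $0<q\le\infty$. No restriction $q<\infty$ or $p^+<\infty$ enters here, precisely because the summation in $j$ is the outermost operation.

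Part (i) is the substantial statement, since now the summation in $j$ sits inside the variable Lebesgue norm and the iteration above is no longer available. The plan is to invoke the variable exponent Fefferman--Stein inequality: for $p\in\Plog$ with $1<p^-\le p^+<\infty$ and $1<q<\infty$ one has
\begin{align*}
\left\|\Big(\sum_{j}|Mf_j|^q\Big)^{1/q}\,\Big|\,L_{p(\cdot)}(\R^n)\right\|\lesssim\left\|\Big(\sum_{j}|f_j|^q\Big)^{1/q}\,\Big|\,L_{p(\cdot)}(\R^n)\right\|,
\end{align*}
which is exactly the main vector-valued result of \cite{CUFMP} and is also recorded as \cite[Theorem 4.3.8]{DHHR}. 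Unwinding the definition of the mixed norm, namely $\norm{(g_j)_j}{L_{p(\cdot)}(\ell_q)}=\big\|\big(\sum_j|g_j|^q\big)^{1/q}\,\big|\,L_{p(\cdot)}(\R^n)\big\|$, this inequality is precisely the assertion of (i).

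The only genuinely hard ingredient is the vector-valued inequality used in part (i); its proof rests on Rubio de Francia extrapolation (or, equivalently, on duality together with the boundedness of $M$ on the dual variable Lebesgue space), and I would not reproduce it but rather cite \cite{CUFMP} and \cite{DHHR}. Everything else --- the scalar maximal theorem and the monotonicity of $\ell_q$ used for (ii), and the unwinding of the mixed-norm definition for (i) --- is standard.
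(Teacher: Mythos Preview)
Your proposal is correct and matches the paper's approach: the paper does not prove this lemma at all but simply states that it ``is a consequence of \cite{CUFMP} and \cite[Theorem 4.3.8]{DHHR}''. You have correctly unpacked those references --- the vector-valued Fefferman--Stein inequality from \cite{CUFMP} for part~(i), and the scalar maximal theorem applied coordinate-wise for part~(ii) --- so your argument is exactly what the authors have in mind.
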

\emph{Proof of Theorem \ref{thm:dif}. }
With the help of Lemma \ref{Lem:Max}, we prove Theorem \ref{thm:dif} for $q$ constant. In view of Lemma \ref{Lem:First},
it is enough to prove 
\begin{equation}\label{eq:todo_F1}
\|f|\Fspqpunkt\|^{**}\lesssim \|f|\Fspqpunkt\|
\end{equation}
and a corresponding analogue for the $B$-spaces.

\emph{Part I. }In this part we point out the necessary modifications in the proof of Lemma \ref{Lem:First} to obtain a characterization by ball means of differences for $B^ \s_{\p,q}(\R^n)$ and $F^\s_{\p,q}(\R^n)$. The proof follows the scheme of Part II of the proof of Lemma \ref{Lem:First}. 
We start with $\displaystyle \sum_{k=-\infty}^\infty$ instead of $\displaystyle\sum_{k=0}^\infty.$ With this modification
the Steps 1-3 go through without any other changes and we obtain \eqref{eq:fin1} again (just recall that $f_{(k)}=0$
if $k<0$).

Due to the boundedness of the maximal operator there is no need for the use of $r$-trick and convolution with $\eta_{\nu,m}$.
The analogue of \eqref{eq:dif6}, \eqref{eq:dif2} and \eqref{eq:dif8} now reads as follows:
\begin{align*}
2^{ks(x)}&\int_B|\Delta^M_{2^{-k}h}f_{(k+l)}(x)|dh\\
&\hspace{-1em}\lesssim\left(2^{ks(x)}2^{la}P_{2^{k+l},a}f_{(k+l)}(x)\right)^{1-\lambda}
\sum_{j=0}^M c_{j,M}\int_B2^{ks(x+j2^{-kh})\lambda}|f_{(k+l)}(x+j2^{-k}h)|^\lambda dh\\
&\hspace{-1em}\le 2^{la(1-\lambda)-ls^-}\left(2^{(k+l)s(x)}P_{2^{k+l},a}f_{(k+l)}(x)\right)^{1-\lambda}\cdot\\
&\hspace{6em}\cdot\sum_{j=0}^M c_{j,M}\int_B2^{(k+l)s(x+j2^{-kh})\lambda}|f_{(k+l)}(x+j2^{-k}h)|^\lambda dh\\
&\hspace{-1em}\lesssim 2^{la(1-\lambda)-ls^-}\left(2^{(k+l)s(x)}P_{2^{k+l},a}f_{(k+l)}(x)\right)^{1-\lambda}
\sum_{j=0}^M c_{j,M} M(|2^{(k+l)s(\cdot)}f_{(k+l)}(\cdot)|^\lambda)(x),
\end{align*}
where we used H\"older's regularity of $\s$, see \eqref{sxClogeigenschaft}. As a consequence, we obtain
\begin{equation*}
II^{1/q(x)}\lesssim F(x)^{1-\lambda}\cdot \sum_{l=1}^\infty 2^{-l\delta}
\left(\sum_{k=-l}^\infty(M B_{k+l}^\lambda)^{q(x)/\lambda}(x)\right)^{\lambda/q(x)}.
\end{equation*}
instead of \eqref{eq:dif10}. The rest then follows in the same manner with the help of Lemma \ref{Lem:Max}
and the proof of \eqref{eq:todo_F1} is finished.

The proof of
\begin{equation*}
\|f|\Bspqpunkt\|^{**}\lesssim \|f|\Bspqpunkt\|
\end{equation*}
follows along the same lines. Especially, we get
\begin{align*}
f^{(k),II}&\lesssim \sum_{l=1}^\infty 2^{la(1-\lambda)-ls^-}(g^1_{k+l}(x))^{1-\lambda}\cdot(M(g^2_{k+l})^\lambda)(x)
\end{align*}
instead of \eqref{eq:dif11}. The rest follows again by Lemma \ref{Lem:Max}.

\emph{Part II. }Finally, we present how the characterization for $q$ constant can help us to improve on the case of variable
exponent $\q$.

In view of Lemma \ref{Lem:First}, it is enough to show that

\begin{align*}
\left\|\left(\sum_{k=-\infty}^0 2^{ks(x)q(x)}\left(d^M_{2^{-k}}f(x)\right)^{q(x)}\right)^{1/q(x)}
\biggl|L_{p(\cdot)}(\R^n)\right\|\lesssim \|f|\Fspqpunkt\|.
\end{align*}
But this is a consequence of
\begin{align*}
&\left\|\left(\sum_{k=-\infty}^0 2^{ks(x)q(x)}\left(d^M_{2^{-k}}f(x)\right)^{q(x)}\right)^{1/q(x)}
\biggl|L_{p(\cdot)}(\R^n)\right\|\\
&\qquad\qquad\lesssim\left\|\left(\sum_{k=-\infty}^0 2^{k(s(x)-\varepsilon)q^-}\left(d^M_{2^{-k}}f(x)\right)^{q^-}\right)^{1/q^-}
\biggl|L_{p(\cdot)}(\R^n)\right\|\\
&\qquad\qquad \lesssim\|f|F^{s(\cdot)-\varepsilon}_{p(\cdot),q^-}(\R^n)\|
\lesssim\|f|F^{s(\cdot)}_{p(\cdot),q(\cdot)}(\R^n)\|,
\end{align*}
where $\varepsilon>0$ is small enough and we used the differences characterization for fixed $q$ and a trivial embedding theorem.

The same arguments apply for the Besov spaces and the proof is finished.\qed
\begin{rem}
The somewhat complicated proof of Theorem \ref{thm:dif} would work more direct and simpler if we could use versions of Lemmas \ref{Lemma_4} and \ref{lem:ConvDHR} in \eqref{eq:diffetaf} and \eqref{eq:diffetab} where the $\ell_\q$ summation runs over $\nu\in\Z$.\\
For Triebel-Lizorkin spaces there seems to exist such an extension \cite{Lars_p}, but for Besov spaces the proof of Lemma \ref{Lemma_4} in \cite{AlmeidaHasto} seems to be to customized to the situation $\nu\in\N_0$.  
\end{rem}
\subsection{Ball means of differences for 2-microlocal spaces}\label{sec:2mldiff}
As already remarked in Section \ref{sec:not2ml} all the proofs for spaces of variable smoothness do also serve for 2-microlocal spaces. One just has to use the definition of admissible weight sequences and the property \eqref{sxClogeigenschaft}, see Remark \ref{rem:2mlsx}.\\
First of all we give the notation for the (quasi-)norms. For simplicity we just use the discrete versions, although it is also possible to give continuous versions of 2-microlocal weights, see \cite[Definition 4.1]{UlRau}. In analogy to the spaces of variable smoothness we introduce the following norms
\begin{align*}
\norm{f}{\Bwpqpunkt}^{**}&=\norm{f}{L_\p(\R^n)}+\norm{\left(w_k(x)d_{2^{-k}}^Mf(x)\right)_{k=-\infty}^\infty}{\ellqp}\\
\intertext{and}
\norm{f}{\Fwpqpunkt}^{**}&=\norm{f}{L_\p(\R^n)}+\norm{\left(w_k(x)d_{2^{-k}}^Mf(x)\right)_{k=-\infty}^\infty}{\ellpq}.
\end{align*}
Finally, the preceding calculations show that the following theorem is true.
\begin{thm}
(i) Let $p,q\in\Plog$ with $p^+,q^+<\infty$ and $\vek{w}\in\mgk$. Let $M>\alpha_2$ and 
\begin{equation}\label{eq:thm:cond12ml}
\alpha_1>\sigma_{p^-,q^-}\cdot\left[1+\frac{\alpha}{n}\cdot \min(p^-,q^-)\right].
\end{equation}
Then
$$
\Fwpqpunkt=\{f\in L_{\p}(\R^n):\|f|\Fwpqpunkt\|^{**}<\infty\}
$$
and $\|\cdot|\Fwpqpunkt\|$ and $\|\cdot|\Fwpqpunkt\|^{**}$ are equivalent on $\Fwpqpunkt$.

(ii) Let $p,q\in\Plog$ and $\vek{w}\in\mgk$. Let $M>\alpha_2$ and
\begin{equation}\label{eq:thm:cond22ml}
\alpha_1>\sigma_{p^-}\cdot \left[1+\frac{c_{\rm log}(1/q)}{n}+\frac{\alpha}{n}\cdot p^-\right].
\end{equation}
Then
$$
\Bwpqpunkt=\{f\in L_{\p}(\R^n):\|f|\Bwpqpunkt\|^{**}<\infty\}
$$
and $\|\cdot|\Bwpqpunkt\|$ and $\|\cdot|\Bwpqpunkt\|^{**}$ are equivalent on $\Bwpqpunkt$.
\end{thm}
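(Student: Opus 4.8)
The plan is to obtain the theorem by transcribing the proofs of Lemma~\ref{Lem:First} and of Theorem~\ref{thm:dif} into the language of admissible weight sequences, exactly as anticipated in Section~\ref{sec:2mldiff}: every factor $2^{ks(\cdot)}$ is replaced by $w_k(\cdot)$, every appeal to the $\log$-H\"older continuity of $\s$ is replaced by the pointwise property \eqref{sxClogeigenschaft} of $\vek{w}\in\mgk$, and the indices are matched as in Remark~\ref{rem:2mlsx}, with $\alpha_1$ playing the role of $s^-$, $\alpha_2$ the role of $s^+$, and the third index $\alpha$ the role of $c_{\log}(s)$. Thus $M>s^+$ becomes $M>\alpha_2$, which is exactly what is needed when summing the $l\le 0$ tail in the estimates of $I$ and $f^{(k),I}$ (there the monotonicity $2^{\alpha_1}w_j\le w_{j+1}\le 2^{\alpha_2}w_j$ from Definition~\ref{dfn_zulGewichte}(ii) takes over the bookkeeping role of $M>s^+$). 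Likewise the restrictions \eqref{eq:thm:cond1}, \eqref{eq:thm:cond2} turn into \eqref{eq:thm:cond12ml}, \eqref{eq:thm:cond22ml}: these entered the proof only through the choice of parameters $0<\lambda<\min(p^-,q^-)$ (resp. $0<\lambda<p^-$) and $a>0$ with $a(1-\lambda)<s^-$ and $a>\frac{n}{\min(p^-,q^-)}+c_{\log}(s)$ (resp. $a>\frac{n+c_{\log}(1/q)}{p^-}+c_{\log}(s)$), and requiring this system to be solvable after replacing $s^-$ by $\alpha_1$ and $c_{\log}(s)$ by $\alpha$ yields precisely the two displayed conditions.

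First I would re-prove the 2-microlocal analogue of Lemma~\ref{Lem:First}. Parts~I and~III--V are purely combinatorial and convolution-theoretic (discretization of the $t$-integral, the splitting $f=\sum_l f_{(k+l)}$, Lemma~\ref{lem:dif_Peetre}, and the discrete convolution Lemma~\ref{_LM_Lemma2}); they go through verbatim after renaming $2^{js(\cdot)}$ to $w_j$ and using the monotonicity inequalities of Definition~\ref{dfn_zulGewichte}(ii) wherever the behaviour of $2^{js}$ in $j$ was used, while the boundedness of the Peetre maximal operator that closes Step~4/Part~II is now quoted directly from Theorem~\ref{Theorem:_LM_abstrakt} rather than from Theorem~\ref{thm:sx:loc1}. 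The one place that genuinely has to be re-examined is the chain \eqref{eq:dif2}--\eqref{eq:dif8} and its Besov counterpart \eqref{eq:dif11}, where the weight is moved past a convolution with $\eta_{k,2m}$. This is precisely the situation handled by Lemma~\ref{Lem:eta1}, which for $\vek{w}\in\mgk$ gives $w_k(x)^\lambda(\eta_{k,2m}\ast g)(x)\lesssim(\eta_{k,m}\ast w_k^\lambda g)(x)$ provided $m>\max(n,\alpha)$ (in the Besov case $m>\max(n+c_{\log}(1/q),\alpha)$, so as also to feed Lemma~\ref{Lemma_4}); combined with Lemma~\ref{lem:r-trick} and the pointwise bound $w_k\le 2^{-l\alpha_1}w_{k+l}$ (valid for $l\ge 1$) this reproduces \eqref{eq:dif8}, \eqref{eq:dif11} with $w$'s in place of $2^{ks(\cdot)}$, after which the remaining H\"older- and $r$-norm manipulations (Lemmas~\ref{lem:conv}, \ref{lem:ConvDHR}, \ref{lem:H}, \ref{lem:Hoelder}) are completely weight-agnostic.

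Then I would close the $F$-scale exactly as in Section~\ref{Sec:Diff:q}. For $q$ constant the Hardy--Littlewood maximal operator is bounded on $\ell_q(L_\p)$ and $L_\p(\ell_q)$ by Lemma~\ref{Lem:Max}, so in the estimate of the term $II$ one may replace the convolutions $\eta_{k,m}\ast(\cdot)$ by $M(\cdot)$, the passage through the weight being again \eqref{sxClogeigenschaft}; this proves $\|f|\Fwpqpunkt\|^{**}\lesssim\|f|\Fwpqpunkt\|$ and, together with the 2-microlocal Lemma~\ref{Lem:First}, the full equivalence for constant $q$. For variable $q$ one bootstraps as in Part~II of Section~\ref{Sec:Diff:q}: it remains to bound the $L_\p$-norm of $\big(\sum_{k\le 0}w_k(x)^{q(x)}(d^M_{2^{-k}}f(x))^{q(x)}\big)^{1/q(x)}$, which is controlled by $\|f|F^{\vek{w}_\varepsilon}_{\p,q^-}(\R^n)\|$ (the constant-$q^-$ characterization just obtained) via the trivial embedding $\Fwpqpunkt\hookrightarrow F^{\vek{w}_\varepsilon}_{\p,q^-}(\R^n)$, where $\vek{w}_\varepsilon:=(2^{-k\varepsilon}w_k)_k\in\mathcal{W}^{\alpha}_{\alpha_1-\varepsilon,\alpha_2-\varepsilon}$ and $\varepsilon>0$ is small; this replaces the embedding $F^{s(\cdot)}_{\p,\q}\hookrightarrow F^{s(\cdot)-\varepsilon}_{\p,q^-}$ used there. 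The same two steps, with $\ellqp$ in place of $\ellpq$, give the corresponding Besov estimate, completing the proof.

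I do not expect a serious conceptual obstacle --- the result really is a faithful translation --- so the hard part is the bookkeeping: one must check that after every substitution the condition that actually survives is $\alpha_1>\sigma_{p^-,q^-}\big[1+\frac{\alpha}{n}\min(p^-,q^-)\big]$ (resp. $\alpha_1>\sigma_{p^-}\big[1+\frac{c_{\log}(1/q)}{n}+\frac{\alpha}{n}p^-\big]$), i.e.\ that $\alpha$ enters the argument in exactly one place (the smoothing exponent $m$, and through it the admissible range of $a$) and $\alpha_1,\alpha_2$ each in exactly one place. A secondary point of care is the same $\Z$-versus-$\N_0$ subtlety in the $\ell_\q$-summation noted in the remark after Theorem~\ref{thm:dif}; here it is dealt with as there, by first proving the preliminary version (the 2-microlocal Lemma~\ref{Lem:First}) together with the constant-$q$ characterization, and only then passing to general $\q$.
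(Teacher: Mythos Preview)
Your proposal is correct and follows exactly the approach the paper itself takes: the paper does not give a separate proof for this theorem but simply declares, at the start of Section~\ref{sec:2mldiff}, that ``the preceding calculations show that the following theorem is true'' once one replaces $2^{js(\cdot)}$ by $w_j(\cdot)$ and the $\log$-H\"older property of $s$ by the weight property \eqref{sxClogeigenschaft}, with $\alpha_1,\alpha_2,\alpha$ playing the roles of $s^-,s^+,c_{\log}(s)$ as in Remark~\ref{rem:2mlsx}. Your write-up is a faithful (and considerably more explicit) elaboration of precisely that transcription, including the correct identification of the only substantive place where the weight structure is used (the chain \eqref{eq:dif2}--\eqref{eq:dif8}, via Definition~\ref{dfn_zulGewichte}(i)) and the bootstrapping device $\vek{w}_\varepsilon=(2^{-k\varepsilon}w_k)_k$ replacing $s(\cdot)-\varepsilon$ in Part~II of Section~\ref{Sec:Diff:q}.
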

\begin{rem}
Again, if $\min(p^-,q^-)\geq1$ in the F-case, or $p^-\geq1$ in the B-case, then the conditions \eqref{eq:thm:cond12ml} and \eqref{eq:thm:cond22ml} simplify to $\alpha_1>0$. In the case of constant exponents $p,q$ we obtain similar results to \cite{Bes1} and \cite{VehelSeuret03}. 
\end{rem}

\subsection{Lemmas}\label{Sec:Lemmas}
The following lemma is a variant of Lemma 6.1 from \cite{DHR}.
\begin{lem}\label{Lem:eta1}
Let $s\in C^{\log}_{loc}(\R^n)$ and let $R\geq c_{\log}(s)$ , where $c_{\log}(s)$ is the constant from \eqref{eq:defclog} for 
$\s$. Then
$$
2^{\nu s(x)}\eta_{\nu,m+R}(x-y)\le c\,2^{\nu s(y)}\eta_{\nu,m}(x-y)
$$ 
holds for all $x,y\in\R^n$ and $m\in\N_0$.
\end{lem}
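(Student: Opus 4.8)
The plan is to reduce the claimed two-variable estimate to a single scalar inequality involving the exponent $\s$, and then to settle that scalar inequality by an elementary one-variable computation.

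First I would carry out the algebraic reduction. Since $\eta_{\nu,m}(z)=2^{n\nu}(1+2^\nu|z|)^{-m}$, dividing the assertion by the strictly positive number $2^{\nu s(y)}2^{n\nu}(1+2^\nu|x-y|)^{-m-R}$ shows that it is equivalent to $2^{\nu(s(x)-s(y))}\le c\,(1+2^\nu|x-y|)^R$ for all $x,y\in\R^n$ and $\nu\in\N_0$. The case $x=y$ is trivial, so assume $x\ne y$. Using $2^{\nu(s(x)-s(y))}\le 2^{\nu|s(x)-s(y)|}$, passing to (natural) logarithms, invoking \eqref{eq:defclog} for $\s$ in the form $|s(x)-s(y)|\le c_{\log}(s)/\log(e+1/|x-y|)$, and finally using $R\ge c_{\log}(s)$, one sees that it suffices to prove the scalar inequality
\[
\frac{\nu\log 2}{\log(e+1/|x-y|)}\le C_0+\log(1+2^\nu|x-y|)
\]
for some absolute constant $C_0$; then $c=e^{C_0\,c_{\log}(s)}$ does the job. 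This is the only place where the hypothesis $R\ge c_{\log}(s)$ enters, and there is no slack: the scalar inequality must hold with an \emph{absolute} $C_0$, independent of $c_{\log}(s)$.

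To prove the scalar inequality, put $t:=2^\nu|x-y|$ and $L:=\log(e+1/|x-y|)\ge 1$. I would rely on two elementary observations. From $e+1/|x-y|\ge 1/|x-y|$ and $e+t\ge t$ we get $(e+1/|x-y|)(e+t)\ge 2^\nu$, hence $\log(e+t)\ge\nu\log 2-L$; and since $(e+t)/(1+t)$ decreases from $e$ to $1$ on $[0,\infty)$, we have $\log(1+t)\ge\log(e+t)-1\ge\nu\log 2-L-1$. Now if $L>\nu\log 2$ the left-hand side of the scalar inequality is $<1$ and there is nothing to prove (take $C_0\ge 1$). If $L\le\nu\log 2$, set $u:=\nu\log 2\ge L\ge 1$; then it suffices that $u/L\le C_0+u-L-1$, i.e. $(L+u/L)-u\le C_0-1$. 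But on $[1,u]$ the convex function $L\mapsto L+u/L$ is maximised at an endpoint, where it equals $1+u$, so $(L+u/L)-u\le 1$ and $C_0=2$ works. Putting the pieces together yields $2^{\nu|s(x)-s(y)|}\le e^{2c_{\log}(s)}(1+2^\nu|x-y|)^R$, hence the lemma with $c=e^{2c_{\log}(s)}$, a constant uniform in $x,y,\nu$ and $m$.

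The only genuinely computational point is the trivial maximisation of $L+u/L$; everything else is bookkeeping. The step I expect to need the most care --- the "main obstacle" --- is precisely the scalar inequality in the borderline regime $|x-y|\approx 2^{-\nu}$: there one must \emph{not} estimate $\log(e+1/|x-y|)$ from below merely by $1$, since that costs an extra universal factor and would force $R\ge C\,c_{\log}(s)$ rather than the stated $R\ge c_{\log}(s)$. Coupling the two logarithms multiplicatively, as in the first of the two observations above, is what keeps the constant sharp and makes the hypothesis $R\ge c_{\log}(s)$ exactly sufficient.
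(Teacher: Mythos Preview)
Your proof is correct. The paper does not actually prove this lemma; it merely states that it is a variant of Lemma~6.1 in \cite{DHR} and leaves it at that. Your self-contained argument---reducing to the scalar estimate $\nu\log 2/\log(e+1/|x-y|)\le C_0+\log(1+2^\nu|x-y|)$ and then handling the two regimes $L>\nu\log 2$ and $L\le\nu\log 2$ via the convexity of $L\mapsto L+u/L$---is clean and sharp, and your observation about why one must couple the two logarithms (rather than bounding $L$ below by $1$) is exactly the point that makes $R\ge c_{\log}(s)$ suffice without an extra multiplicative constant.
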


\begin{lem}\label{lem:r-trick}
Let $r>0$, $\nu\ge 0$ and $m>n$. Then there exists $c>0$, which depends only on $m,n$ and $r$, such that for all $g\in S'(\R^n)$
with $\supp \hat g\subset \{\xi\in\R^n:|\xi|\le 2^{\nu+1}\}$, we have
$$
 |g(x)|\le c(\eta_{\nu,m}*|g|^r(x))^{1/r},\quad x\in\R^n.
$$
\end{lem}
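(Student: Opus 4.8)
The statement to be proved is the pointwise ``$r$-trick'' inequality of Lemma \ref{lem:r-trick}: for $g\in S'(\R^n)$ with $\supp\hat g\subset\{|\xi|\le 2^{\nu+1}\}$, one has $|g(x)|\lesssim(\eta_{\nu,m}*|g|^r(x))^{1/r}$ with a constant depending only on $m,n,r$. The plan is to reduce to the case $\nu=0$ by dilation, then exploit the band-limitedness of $g$ to control $|g(x)|$ by a local average of $|g|^r$ over a unit ball, and finally convert the unit-ball average into the convolution with $\eta_{0,m}$ using that $\eta_{0,m}$ dominates (a multiple of) the normalized characteristic function of the unit ball.

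\textbf{Step 1: Reduction to $\nu=0$.} Put $g^\nu(x):=g(2^{-\nu}x)$. Then $\widehat{g^\nu}(\xi)=2^{\nu n}\hat g(2^\nu\xi)$, so $\supp\widehat{g^\nu}\subset\{|\xi|\le 2\}$. If the inequality is known for band-limited distributions with spectrum in $\{|\xi|\le 2\}$, i.e. $|g^\nu(y)|\lesssim(\eta_{0,m}*|g^\nu|^r(y))^{1/r}$, then substituting $y=2^\nu x$ and changing variables in the convolution integral ($\eta_{0,m}*|g^\nu|^r(2^\nu x)=\int_{\R^n}(1+|2^\nu x-z|)^{-m}|g(2^{-\nu}z)|^r\,dz=2^{\nu n}\int_{\R^n}(1+2^\nu|x-w|)^{-m}|g(w)|^r\,dw=\eta_{\nu,m}*|g|^r(x)$) recovers the general case with the same constant. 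So it suffices to treat $\nu=0$.

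\textbf{Step 2: Pointwise control of a band-limited function by a local $L^r$-average.} For $g$ with $\supp\hat g\subset\{|\xi|\le 2\}$, fix a function $\omega\in\Sn$ with $\hat\omega\equiv 1$ on $\{|\xi|\le 2\}$, so that $g=\omega*g$ (the identity holds in $\SSn$, and since $g$ then agrees with a smooth function of polynomial growth it holds pointwise). The key classical inequality — the Plancherel--Pólya--Nikol'skij type estimate — states that for such band-limited $g$ and any $r>0$,
\begin{align*}
|g(x)|^r\le c\int_{\R^n}\frac{|g(x-z)|^r}{(1+|z|)^{m}}\,dz,\qquad x\in\R^n,
\end{align*}
provided $m>n$, with $c=c(m,n,r)$. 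The standard route: write $g(x)=(\omega*g)(x)$, raise to a small power $r$ when $r\le 1$ using $|a+b|^r\le|a|^r+|b|^r$ after decomposing the convolution integral dyadically in $|z|$ and using the Schwartz decay of $\omega$; and when $r>1$ use Hölder's inequality against $\omega\in\Sn$. One first proves the version with a supremum, $|g(x)|\le c\,(M|g|^r(x))^{1/r}$ or directly $|g(x)|^r\le c\,(\eta_{0,m}*|g|^r)(x)$, using that differentiating or convolving a band-limited function reproduces it; the cleanest argument iterates $g=\omega*g$ once and estimates $|(\omega*g)(x)|\le\sup_z\frac{|g(x-z)|}{(1+|z|)^{m/r}}\cdot\int|\omega(z)|(1+|z|)^{m/r}dz$, then bootstraps the Peetre-type maximal quantity $g^*_{m/r}(x)=\sup_z|g(x-z)|(1+|z|)^{-m/r}$ exactly as in Lemma \ref{_LM_Lemma3}/the proof of Theorem \ref{Theorem:_Bo_PMO} to absorb the $1-r$ power and land on the integral $\int|g(x-z)|^r(1+|z|)^{-m}dz$. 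Since $m>n$ guarantees integrability of $(1+|z|)^{-m}$ and of $|\omega(z)|(1+|z|)^{m}$, all constants are finite and depend only on $m,n,r$.

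\textbf{Step 3: Conclusion.} Observe $\eta_{0,m}(z)=(1+|z|)^{-m}$, so the integral appearing in Step 2 is exactly $(\eta_{0,m}*|g|^r)(x)$. Hence $|g(x)|^r\le c\,(\eta_{0,m}*|g|^r)(x)$, i.e. $|g(x)|\le c^{1/r}(\eta_{0,m}*|g|^r(x))^{1/r}$, which is the claim for $\nu=0$; Step 1 then gives all $\nu\ge 0$.

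\textbf{Main obstacle.} The only genuine difficulty is Step 2 for $0<r<1$: one cannot simply apply Hölder, and a naive dyadic decomposition of $\omega*g$ loses the power $r$ in the wrong direction. The fix is the bootstrapping trick already used in this paper (Lemma \ref{_LM_Lemma3} and the proof of Theorem \ref{Theorem:_Bo_PMO}): estimate the Peetre maximal function $g^*_{N}(x):=\sup_{z}|g(x-z)|(1+|z|)^{-N}$ for suitable $N$, show it is finite (using that $g$ is a polynomially bounded smooth function because it is band-limited), derive a self-improving inequality $g^*_N(x)\le C\,(g^*_N(x))^{1-r}\big(\int|g(x-z)|^r(1+|z|)^{-m}dz\big)$, and conclude $(g^*_N(x))^r\le C\int|g(x-z)|^r(1+|z|)^{-m}dz$; then $|g(x)|\le g^*_N(x)$ finishes it. Everything else is routine Fourier analysis and change of variables.
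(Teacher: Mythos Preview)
The paper does not supply its own proof of Lemma~\ref{lem:r-trick}; it merely states the result in Section~\ref{Sec:Lemmas} as a known fact (it is the classical Plancherel--P\'olya/Nikol'skij ``$r$-trick'', cf.\ \cite{Triebel1}, \cite{Rychkov}). Your outline---scaling reduction to $\nu=0$, the self-improving estimate for the Peetre maximal function $g^*_N$ of a band-limited $g$, and the bootstrapping step to absorb the power $1-r$---is precisely the standard argument and is correct in spirit. One small caveat: in Step~2 you blend several versions of the argument (H\"older for $r>1$, dyadic decomposition, bootstrapping) without committing to one; for a clean write-up stick with the bootstrapping route you describe at the end, making explicit that $g^*_N(x)<\infty$ because a tempered distribution with compact Fourier support is an entire function of at most polynomial growth, and that the constant obtained is indeed independent of $g$ once $g^*_N(x)$ is finite (the same two-step remark on constants used after~\eqref{_Bo__PO_4}).
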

The following lemma is the counterpart to Lemma \ref{Lemma_4} for Triebel-Lizorkin spaces.
\begin{lem}[\cite{DHR}, Theorem 3.2]\label{lem:ConvDHR}
	Let $p,q\in\Plog$ with $1<p^-\leq p^+<\infty$ and $1<q^-\leq q^+<\infty$. Then the inequality
	\begin{align*}
		\norm{(\eta_{\nu,m}\ast f)_{\nu=0}^\infty}{\ellpq}\leq c\norm{(f_\nu)_{\nu=0}^\infty}{\ellpq}
	\end{align*}
	holds for every sequence $(f_\nu)_{\nu\in\N_0}$ of $L_1^{loc}(\R^n)$ functions and $m>n$.
\end{lem}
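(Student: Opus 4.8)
The statement is \cite[Theorem 3.2]{DHR}; the route I would take rests on two facts, one elementary and one deep. First, because $m>n$ the kernel $\eta_{\nu,m}$ lies in $L_1(\R^n)$ with $\|\eta_{\nu,m}|L_1(\R^n)\|=\int_{\R^n}(1+|y|)^{-m}\,dy=:A_m$, a number independent of $\nu$ (substitute $y=2^\nu x$), and it is nonnegative and radially decreasing. Decomposing $\eta_{\nu,m}$ through its layer--cake formula, every super-level set of which is a ball centered at the origin, and using $|B|^{-1}(\chi_B\ast|g|)(x)\le(Mg)(x)$ for such balls $B$ (with $M$ the centered Hardy--Littlewood maximal operator), one obtains the $\nu$-uniform pointwise majorization
\begin{equation}\label{eq:prop:point}
0\le(\eta_{\nu,m}\ast f_\nu)(x)\le A_m\,(Mf_\nu)(x),\qquad x\in\R^n,\ \nu\in\N_0.
\end{equation}

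Since the $\ellpq$-quasi-norm is monotone in $|f_\nu|$ coordinatewise, \eqref{eq:prop:point} reduces the lemma to the vector-valued Fefferman--Stein inequality
\begin{equation}\label{eq:prop:FS}
\norm{(Mf_\nu)_{\nu=0}^\infty}{\ellpq}\le c\,\norm{(f_\nu)_{\nu=0}^\infty}{\ellpq}
\end{equation}
under the hypotheses $p,q\in\Plog$, $1<p^-\le p^+<\infty$, $1<q^-\le q^+<\infty$. For constant $q$ this is Lemma \ref{Lem:Max}(i), which comes from the scalar boundedness of $M$ on every $L_{r(\cdot)}(\R^n)$ with $r\in\Plog$, $r^->1$, via the Rubio de Francia extrapolation of Cruz-Uribe--Fiorenza--Martell--P\'erez \cite{CUFMP}. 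The genuinely variable exponent $q(\cdot)$ is the heart of the matter: one cannot embed $\ell_{q(\cdot)}\hookrightarrow\ell_{q^-}$ to reduce to constant $q$ (that estimate goes the wrong way), so the scale structure carried by $\eta_{\nu,m}$ should not be thrown away into $M$ but kept. The way to handle it is to argue directly at the level of the modular of $(\eta_{\nu,m}\ast f_\nu)_\nu$ over $\ellpq$, using that for $|z|\lesssim 2^{-\nu}$ both $p$ and $q$ change by at most $O(1)$ between $x$ and $x-z$ (the $\log$-H\"older hypothesis, quantified exactly as in Lemma \ref{Lem:eta1}); this lets one replace, inside the convolution and with a spare factor of $\eta_{\nu,m}$ to spend, the exponents at $x-z$ by their values at $x$, producing an error that decays rapidly in $\nu$ and in $|x|$, while what remains is a constant-exponent expression to which \eqref{eq:prop:FS} with $q^-$ applies, up to an arbitrarily small loss of smoothness absorbed by a trivial embedding. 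This is precisely the content of the proof of \cite[Theorem 3.2]{DHR}; compare also \cite[Chapter 4]{DHHR}.

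\emph{Main obstacle.} Everything except the variable-$q(\cdot)$ case of \eqref{eq:prop:FS} is routine: the majorization \eqref{eq:prop:point}, the constant-$q$ case, and the lattice reduction are all standard. The work lies in making the step ``freeze the exponents at the centre of the scale-$2^{-\nu}$ ball'' rigorous simultaneously for the integrability exponent $p(\cdot)$ and the summability exponent $q(\cdot)$, and in checking that the accumulated errors sum in $\nu$ --- this is where the hypotheses $q\in\Plog$ and $1<q^-\le q^+<\infty$ are consumed. A self-contained alternative that avoids invoking \eqref{eq:prop:FS} for variable $q$ at all is to run the same modular estimate directly on $\eta_{\nu,m}\ast f_\nu$ through the pointwise transfer inequality $(\eta_{\nu,2m}\ast g)(x)\lesssim\big((\eta_{\nu,m}\ast|g|^{p(\cdot)})(x)\big)^{1/p(x)}+(\text{decaying error})$, valid for $\norm{g}{L_{p(\cdot)}(\R^n)}\le1$, and then sum over $\nu$ by a geometric-series argument using only the scalar maximal theorem.
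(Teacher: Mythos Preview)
The paper does not supply a proof; the lemma is quoted directly from \cite[Theorem~3.2]{DHR}. Your final description --- arguing at the modular level, freezing $p(\cdot)$ and $q(\cdot)$ at the centre of each scale-$2^{-\nu}$ ball via the $\log$-H\"older hypothesis and absorbing the cost by spending extra decay of $\eta_{\nu,m}$ --- is an accurate summary of what \cite{DHR} actually does, so in substance you have identified the right mechanism.

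The framing needs correction, however. You first write that the pointwise bound $\eta_{\nu,m}\ast|f_\nu|\le A_m\,Mf_\nu$ ``reduces the lemma to the vector-valued Fefferman--Stein inequality'' for $M$ on $L_{p(\cdot)}(\ell_{q(\cdot)})$, and then treat the variable-$q$ case of that inequality as merely difficult. It is not difficult; it is \emph{false}. The Hardy--Littlewood maximal operator is in general unbounded on $L_{p(\cdot)}(\ell_{q(\cdot)})$ for nonconstant $q$, even under the $\log$-H\"older assumptions --- the present paper stresses this explicitly (see the opening of Section~4 and the paragraph preceding Lemma~\ref{Lemma_4}), and it is precisely the content of \cite[Section~5]{DHR}. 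So your first step reduces the lemma to a statement that fails, and the lemma is \emph{not} a corollary of a Fefferman--Stein inequality but a genuine replacement for it: the scale $2^{-\nu}$ carried by $\eta_{\nu,m}$, which you throw away when passing to $M$, is exactly what makes the inequality survive. Your ``self-contained alternative'' in the last paragraph is therefore not an alternative but the only available route, and you should present it as the proof rather than as a fallback after an attempted reduction that cannot work.
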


The following lemma is well known (cf. \cite{Triebel1}). We sketch its proof for the sake of completeness.

\begin{lem}\label{lem:dif_Peetre}
Let $a,b>0$, $M\in \N$ and $h\in\R^n$. Let $f\in S'(\R^n)$ with $\supp \hat f\subset\{\xi\in\R^n:|\xi|\le b\}$.
Then there is a constant $C>0$ independent of $f, b$ and $h$, such that
$$
|\Delta^M_h f(x)|\le C\max(1,|bh|^a)\cdot\min(1,|bh|^M)P_{b,a}f(x)
$$
holds for every $x\in\R^n$.
\end{lem}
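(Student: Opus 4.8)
The plan is to split the argument according to the two regimes $|bh|\ge 1$ and $|bh|<1$, which match the two factors $\max(1,|bh|^a)$ and $\min(1,|bh|^M)$ on the right-hand side.

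If $|bh|\ge 1$, only the definitions are needed. Writing $\Delta^M_hf(x)=\sum_{j=0}^M(-1)^j\binom{M}{j}f(x+(M-j)h)$ as in \eqref{eq:dif3} and applying $|f(x-z)|\le(1+|bz|^a)P_{b,a}f(x)$ with $z=-(M-j)h$, every summand is at most $(1+M^a|bh|^a)P_{b,a}f(x)\le(1+M^a)|bh|^aP_{b,a}f(x)$; summing the $2^M$ terms yields the estimate, since here $\min(1,|bh|^M)=1$ and $\max(1,|bh|^a)=|bh|^a$.

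The main case is $|bh|<1$, where the band-limitedness of $f$ must be used to gain the full power $|bh|^M$. First I would fix $\psi\in\Sn$ with $\hat\psi(\xi)=1$ for $|\xi|\le 1$ and put $\psi_b(x)=b^n\psi(bx)$, so that $\widehat{\psi_b}\equiv 1$ on $\supp\hat f$ and therefore $f=f*\psi_b$. Since finite differences commute with convolution, $\Delta^M_hf(x)=(f*\Delta^M_h\psi_b)(x)=\int_{\R^n}f(x-y)(\Delta^M_h\psi_b)(y)\,dy$, and using $|f(x-y)|\le(1+|by|^a)P_{b,a}f(x)$ reduces the claim to the inequality $\int_{\R^n}(1+|by|^a)|(\Delta^M_h\psi_b)(y)|\,dy\lesssim|bh|^M$. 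The substitution $z=by$ turns $\Delta^M_h\psi_b$ into $b^n(\Delta^M_{bh}\psi)(b\cdot)$ and makes the bound scale invariant, so it remains to show $\int_{\R^n}(1+|z|^a)|(\Delta^M_{bh}\psi)(z)|\,dz\lesssim|bh|^M$. For this I would use the integral form of the $M$-th difference of the smooth function $\psi$, namely $(\Delta^M_k\psi)(z)=\int_{[0,1]^M}(D_k^M\psi)(z+(u_1+\dots+u_M)k)\,du$ with $D_k=\sum_i k_i\partial_i$, giving $|(\Delta^M_{bh}\psi)(z)|\le|bh|^M\sup_{|w|\le M}\sum_{|\gamma|=M}\binom{M}{\gamma}|\partial^\gamma\psi(z+w)|$ because $|bh|<1$; the Schwartz decay of $\psi$ together with $|w|\le M$ then bounds this by $c_N|bh|^M(1+|z|)^{-N}$ for any $N$, and choosing $N>n+a$ makes the weighted integral finite.

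The delicate point is exactly this last case: one has to extract the factor $|bh|^M$ from the short-step $M$-th difference while at the same time absorbing the polynomial weight $1+|bz|^a$ coming from the Peetre maximal function into the rapid decay of $\psi$. The reproducing identity $f=f*\psi_b$ and the rescaling $z=by$ are what guarantee that the final constant depends only on $a$, $M$ and $n$, and not on $f$, $b$ or $h$.
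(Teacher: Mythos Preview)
Your argument is correct. The case split $|bh|\ge 1$ versus $|bh|<1$ is a clean way to organize the two factors, and in the short-step case your reproducing identity $f=f*\psi_b$ together with the mean-value representation of $\Delta^M_{bh}\psi$ does exactly what is needed; the rescaling $z=by$ is handled properly and the constant depends only on $a,M,n$.

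The paper proceeds differently in the second part. Instead of transferring the difference onto an auxiliary Schwartz function, it applies a Taylor expansion directly to the band-limited (hence entire) function $f$ to get
\[
|\Delta^M_h f(x)|\lesssim |h|^M\max(1,|bh|^a)\sup_{|\alpha|=M}\sup_{|y|\le M|h|}\frac{|D^\alpha f(x-y)|}{1+|by|^a},
\]
and then invokes a Nikol'skij-type inequality $P_{1,a}(D^\alpha g)\lesssim P_{1,a}g$ for $\supp\hat g\subset\{|\xi|\le 1\}$, followed by the dilation $g(\cdot)=f(\cdot/b)$. Your route avoids this Bernstein/Nikol'skij estimate for the Peetre maximal function entirely and replaces it by elementary Schwartz decay of $\psi$, at the cost of introducing the reproducing kernel. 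Both approaches exploit the band-limitedness of $f$---the paper through differentiation bounds for entire functions of exponential type, you through the identity $\hat f\,\widehat{\psi_b}=\hat f$---and both yield constants independent of $f,b,h$. Your argument is slightly more self-contained, while the paper's is closer to the classical treatment in \cite{Triebel1}.
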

\begin{proof}
The estimate
\begin{align*}
|f(x+jh)|&=\frac{|f(x+jh)|}{1+|jbh|^a}\cdot(1+|jbh|^a)\le (1+|Mbh|^a)\sup_{z\in\R^n}\frac{f(x-z)}{1+|bz|^a}\\
&\lesssim \max(1,|bh|^a)P_{b,a}f(x),\quad j=0,\dots,M,
\end{align*}
holds for all the admissible parameters even without the assumption on $\hat f$.

Hence we need to prove only
\begin{equation}\label{eq:proof:D2}
|\Delta^M_h f(x)|\le C\max(1,|bh|^a)\cdot|bh|^M\cdot P_{b,a}f(x).
\end{equation}
Using the Taylor formula for the (analytic) function $f$, we obtain by direct calculation
\begin{align*}
|\Delta^M_h f(x)|&\le c|h|^M \sup_{|\alpha|=M}\sup_{|y|\le M|h|}\frac{|(D^\alpha f)(x-y)|}{1+|by|^a}\cdot(1+|by|^a)\\
&\le c' |h|^M\max(1,|bh|^a)\cdot\sup_{|\alpha|=M}\sup_{|y|\le M|h|}\frac{|(D^\alpha f)(x-y)|}{1+|by|^a}.
\end{align*}
If $\supp \hat g\subset\{\xi\in\R^n:|\xi|\le 1\}$, then this may be combined with the Nikol'skij inequality,
cf. \cite[Section 1.3.1]{Triebel1}, in the form
$$
\sup_{|\alpha|=M}\sup_{z\in\R^n}\frac{|(D^\alpha g)(x-z)|}{1+|z|^a}\lesssim \sup_{z\in\R^n}\frac{|g(x-z)|}{1+|z|^a}
$$
to obtain
\begin{equation}\label{eq:proof:D1}
|\Delta^M_h g(x)|\le c'' |h|^M \max(1,|h|^a)\cdot\sup_{z\in\R^n}\frac{|g(x-z)|}{1+|z|^a}.
\end{equation}
If $\supp \hat f\subset\{\xi\in\R^n:|\xi|\le b\}$, we define $g(x)=f(x/b)$, apply \eqref{eq:proof:D1}
together with $\Delta_h^Mf(x)=\Delta_{bh}^Mg(bx)$ and obtain
$$
|\Delta^M_h f(x)|\lesssim |bh|^M\max(1,|bh|^a)\sup_{z\in\R^n}\frac{|g(bx-z)|}{1+|z|^a}.
$$
From this \eqref{eq:proof:D2} follows and the proof is then complete.
\end{proof}

The following lemma resembles Lemma A.3 of \cite{DHR}. 

\begin{lem}\label{lem:conv}
Let $k\in\Z$, $l\in \N_0$ and $m>n$. Then
$$
\eta_{k+l,m}*[2^{kn}\chi_{2^{-k}B}]\lesssim \eta_{k,m}.
$$
\end{lem}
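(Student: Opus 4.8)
The plan is to reduce the inequality to a single scale-free convolution estimate by a dilation, and then to prove that estimate by splitting $\R^n$ into a bounded region and its complement. First I would record that $2^{kn}\chi_{2^{-k}B}$ has integral $|B|$ independent of $k$, and that for $m>n$ the kernel $\eta_{\nu,m}$ is $L_1$-normalized with $\|\eta_{\nu,m}\|_{L_1(\R^n)}=\int_{\R^n}(1+|w|)^{-m}\,dw=:c_m<\infty$, a constant independent of $\nu$. Applying the dilation $x\mapsto 2^{-k}x$ and the substitution $y=2^{-k}z$ in the convolution integral, and using $\eta_{k+l,m}(2^{-k}w)=2^{kn}\eta_{l,m}(w)$, one computes
\[
\bigl(\eta_{k+l,m}*[2^{kn}\chi_{2^{-k}B}]\bigr)(2^{-k}x)=2^{kn}(\eta_{l,m}*\chi_B)(x),\qquad \eta_{k,m}(2^{-k}x)=2^{kn}(1+|x|)^{-m}.
\]
Hence the assertion of the lemma is equivalent to the scale-free estimate
\[
(\eta_{l,m}*\chi_B)(x)\ \lesssim\ (1+|x|)^{-m}\qquad\text{for all }x\in\R^n\text{ and }l\in\N_0,
\]
with implied constant independent of $l$, which is what I would then establish.

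For $|x|\le 2$ this is immediate, since $(\eta_{l,m}*\chi_B)(x)\le\|\chi_B\|_\infty\|\eta_{l,m}\|_{L_1(\R^n)}=c_m$ while $(1+|x|)^{-m}\ge 3^{-m}$ on this bounded set. For $|x|>2$ and $z\in B$ one has $|x-z|\ge|x|-1>|x|/2$, and since $l\ge 0$ also $2^l|x-z|\ge|x-z|>|x|/2>1$. Writing $m=n+(m-n)$ with $m-n>0$ and splitting $2^{nl}(1+2^l|x-z|)^{-m}=\bigl(2^l/(1+2^l|x-z|)\bigr)^{n}(1+2^l|x-z|)^{-(m-n)}$, I would use
\[
\frac{2^l}{1+2^l|x-z|}\le\frac{1}{|x-z|}<\frac{2}{|x|},\qquad (1+2^l|x-z|)^{-(m-n)}\le|x-z|^{-(m-n)}<\Bigl(\frac{|x|}{2}\Bigr)^{-(m-n)},
\]
so that $\eta_{l,m}(x-z)\lesssim|x|^{-m}$ uniformly in $z\in B$ and $l\ge 0$. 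Integrating over $z\in B$, a set of finite measure, yields $(\eta_{l,m}*\chi_B)(x)\lesssim|x|^{-m}\lesssim(1+|x|)^{-m}$, completing the proof.

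The computation is elementary; the only delicate point is the region $|x|>2$, where the kernel $\eta_{l,m}$ carries the potentially huge prefactor $2^{nl}$ and one must extract enough polynomial decay in $|x|$ both to absorb $2^{nl}$ and still retain the $|x|^{-m}$ rate. This is exactly what the splitting $m=n+(m-n)$ delivers, using $m>n$ together with $2^l\ge 1$; no cancellation and no smoothness of $\chi_B$ enters. Alternatively, one could note $\chi_B\le 2^{m}\eta_{0,m}$ and quote a standard convolution estimate of the form $\eta_{l,m}*\eta_{0,m}\lesssim\eta_{0,m}$ (in the spirit of Lemma A.3 of \cite{DHR}), but the direct argument above is no longer.
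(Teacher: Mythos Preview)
Your proof is correct and follows essentially the same approach as the paper: reduce to $k=0$ by dilation, then split into $|x|\le 2$ (handled by the uniform $L_1$-bound on $\eta_{l,m}$) and $|x|>2$ (handled by a pointwise estimate using $|x-z|\gtrsim|x|$ and $l\ge 0$, $m>n$). Your explicit $m=n+(m-n)$ splitting is just a more detailed version of the paper's one-line observation $2^{nl}(1+2^l|x|)^{-m}\lesssim(1+|x|)^{-m}$, and your closing remark about the alternative via $\chi_B\le 2^m\eta_{0,m}$ and Lemma A.3 of \cite{DHR} matches the paper's own remark verbatim.
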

\begin{proof}
Using dilations, we may suppose that $k=0$. If $|x|\le 2$, then
$$
\int_{\{y:|x-y|\le 1\}}2^{nl}(1+2^l|y|)^{-m}dy \le 
\int_{y\in\R^n}2^{nl}(1+2^l|y|)^{-m}dy \lesssim (1+|x|)^{-m}.
$$
If $|x|>2$ and $|x-y|\le 1$, we obtain $1+2^l|y|\gtrsim 1+2^l|x|$ and $2^{nl}(1+2^l|x|)^{-m}\lesssim (1+|x|)^{-m}$.
This immediately implies that
\begin{align*}
\int_{\{y:|x-y|\le 1\}}2^{nl}(1+2^l|y|)^{-m}dy\lesssim
\int_{\{y:|x-y|\le 1\}}(1+|x|)^{-m}dy\lesssim (1+|x|)^{-m}.
\end{align*}
\end{proof}
\begin{rem} Another way, how to prove Lemma \ref{lem:conv} is to use the inequality
$\chi_B(x)\le 2^{m}\eta_{0,m}(x)$ and apply Lemma A.3 of \cite{DHR}.
\end{rem}
The following Lemma is quite simple and we leave out its proof.
\begin{lem}\label{lem:H}
Let $0<q<\infty$, $\delta>0$ and let $(a_l)_{l\in\N}$ be a sequence of non-negative real numbers. Then
$$
\left(\sum_{l=1}^\infty 2^{-l\delta q}a_l\right)^{1/q}\lesssim \sum_{l=1}^\infty 2^{-l\delta/2}a_l^{1/q},
$$
where the constant involved depends only on $\delta$ and $q$.
\end{lem}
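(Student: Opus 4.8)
The plan is to eliminate the geometric weight on the right-hand side by the substitution $e_l:=2^{-l\delta/2}a_l^{1/q}$, so that $\sum_l 2^{-l\delta/2}a_l^{1/q}=\sum_l e_l$ and, since $e_l^q=2^{-l\delta q/2}a_l$, also $2^{-l\delta q}a_l=2^{-l\delta q/2}e_l^q$. Thus the claim reduces to showing
\[
\Big(\sum_{l=1}^\infty 2^{-l\delta q/2}e_l^q\Big)^{1/q}\lesssim \sum_{l=1}^\infty e_l
\]
for every non-negative sequence $(e_l)_{l\in\N}$, with the constant depending only on $\delta$ and $q$. I would then split into two cases according to whether $q\ge 1$ or $0<q<1$.

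If $q\ge 1$, the argument is immediate: $2^{-l\delta q/2}\le 1$ gives $\sum_l 2^{-l\delta q/2}e_l^q\le\sum_l e_l^q$, and the elementary norm inequality $\|\cdot\|_{\ell^q}\le\|\cdot\|_{\ell^1}$ (valid for $q\ge 1$) yields $\big(\sum_l e_l^q\big)^{1/q}\le\sum_l e_l$. No constant larger than $1$ is needed here. If $0<q<1$, monotonicity of $\ell^p$-norms points the wrong way, and the exponential decay $2^{-l\delta q/2}$ must genuinely be used. Here I would apply Hölder's inequality to the product $e_l^q\cdot 2^{-l\delta q/2}$ with the conjugate exponents $1/q$ and $1/(1-q)$:
\[
\sum_{l=1}^\infty e_l^q\,2^{-l\delta q/2}\le\Big(\sum_{l=1}^\infty e_l\Big)^{q}\Big(\sum_{l=1}^\infty 2^{-l\delta q/(2(1-q))}\Big)^{1-q}.
\]
The second factor is a convergent geometric series whose sum depends only on $\delta$ and $q$; denoting it $C_{\delta,q}$, one gets $\big(\sum_l 2^{-l\delta q}a_l\big)^{1/q}\le C_{\delta,q}^{1/q}\sum_l e_l$, i.e.\ the stated inequality with admissible constant $\max(1,C_{\delta,q}^{1/q})$.

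There is no real obstacle in this lemma; the only point needing a moment's thought is that for $q<1$ naive subadditivity of $t\mapsto t^q$ fails, so one cannot simply bound the left-hand side termwise — the exponential weight has to be spent, which is exactly the role of the Hölder step above. Undoing the substitution $e_l=2^{-l\delta/2}a_l^{1/q}$ then gives the assertion.
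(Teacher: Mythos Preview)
Your proof is correct. The paper actually omits the proof of this lemma entirely, stating only that it ``is quite simple''; hence there is no approach to compare against, and your substitution $e_l=2^{-l\delta/2}a_l^{1/q}$ followed by the $\ell^q\le\ell^1$ inequality for $q\ge 1$ and H\"older for $0<q<1$ is a perfectly clean way to fill the gap. (One minor terminological quibble: for $0<q<1$ the map $t\mapsto t^q$ \emph{is} subadditive---what fails is the reverse inequality $\sum e_l^q\le(\sum e_l)^q$ that you would need; but your mathematics is unaffected by this slip.)
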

Finally, we shall need a certain version of H\"older's inequality for $\ellqp$ spaces.
\begin{lem}\label{lem:Hoelder}
Let $p,q\in\P$ and let $0<\lambda<1$. Then
\begin{equation}\label{eq:Hoelder}
\|f_k\cdot g_k|\ellqp\|\le 2^{1/q^-} \|f_k^{1/(1-\lambda)}|\ellqp\|^{1-\lambda}\cdot \|g_k^{1/\lambda}|\ellqp\|^\lambda
\end{equation}
holds for all sequences of non-negative functions $(f_k)_{k\in\N_0}$ and $(g_k)_{k\in\N_0}$.
\end{lem}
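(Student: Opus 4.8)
\emph{Proof proposal.} The plan is to prove the inequality at the level of the modular $\varrho_\ellqp$, reducing it to a fibre-wise (fixed index $k$) H\"older inequality in $\Lpp$ combined with the classical H\"older inequality for the summation in $k$. First I would set $F_k=f_k^{1/(1-\lambda)}$ and $G_k=g_k^{1/\lambda}$, so that $f_kg_k=F_k^{\,1-\lambda}G_k^{\,\lambda}$, and put $A=\norm{F_k}{\ellqp}$, $B=\norm{G_k}{\ellqp}$. After disposing of the trivial cases ($A$ or $B$ equal to $0$ or $\infty$), I would exploit homogeneity of the quasi-norm: for any $t_1>A$ and $t_2>B$ one has $\norm{F_k/t_1}{\ellqp}<1$ and $\norm{G_k/t_2}{\ellqp}<1$, and since $\mu\mapsto\varrho_\ellqp(\cdot/\mu)$ is non-increasing this forces $\varrho_\ellqp(F_k/t_1)\le1$ and $\varrho_\ellqp(G_k/t_2)\le1$. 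Hence it is enough to establish the modular estimate
\[
\varrho_\ellqp\!\left(\frac{F_k^{\,1-\lambda}G_k^{\,\lambda}}{t_1^{1-\lambda}t_2^{\lambda}}\right)\le\varrho_\ellqp\!\left(\frac{F_k}{t_1}\right)^{1-\lambda}\varrho_\ellqp\!\left(\frac{G_k}{t_2}\right)^{\lambda}.
\]
Its right-hand side is then $\le1$, so $\norm{f_kg_k}{\ellqp}\le t_1^{1-\lambda}t_2^{\lambda}$, and letting $t_1\downarrow A$, $t_2\downarrow B$ yields $\norm{f_kg_k}{\ellqp}\le A^{1-\lambda}B^{\lambda}\le 2^{1/q^-}A^{1-\lambda}B^{\lambda}$.

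For the modular estimate I would recall that $\varrho_\ellqp((h_k)_k)=\sum_k b(h_k)$ with the fibre functional $b(h)=\inf\{\mu>0:\varrho_\p(h/\mu^{1/\q})\le1\}$, and prove the fibre-wise bound $b(H^{1-\lambda}K^{\lambda})\le b(H)^{1-\lambda}b(K)^{\lambda}$ for non-negative $H,K$ (to be applied with $H=F_k/t_1$, $K=G_k/t_2$). To do so I would fix $\alpha>b(H)$, $\beta>b(K)$, so that $\varrho_\p(H/\alpha^{1/\q})\le1$ and $\varrho_\p(K/\beta^{1/\q})\le1$ by monotonicity, set $\mu=\alpha^{1-\lambda}\beta^{\lambda}$, and use the pointwise identity $\mu^{-1/q(x)}H^{1-\lambda}K^{\lambda}=(\alpha^{-1/q(x)}H)^{1-\lambda}(\beta^{-1/q(x)}K)^{\lambda}$ together with the numerical inequality $u^{1-\lambda}v^{\lambda}\le(1-\lambda)u+\lambda v$: on $\R^n_0$ this bounds $\varphi_{p(x)}$ of the left side by $(1-\lambda)\varphi_{p(x)}(\alpha^{-1/q(x)}H)+\lambda\varphi_{p(x)}(\beta^{-1/q(x)}K)$, while on $\R^n_\infty$ finiteness of the two modulars forces $H\le\alpha^{1/\q}$ and $K\le\beta^{1/\q}$ a.e., so the relevant $\varphi_\infty$-terms all vanish there. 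Integrating gives $\varrho_\p(\mu^{-1/\q}H^{1-\lambda}K^{\lambda})\le(1-\lambda)+\lambda=1$, hence $b(H^{1-\lambda}K^{\lambda})\le\mu$; letting $\alpha\downarrow b(H)$, $\beta\downarrow b(K)$ yields the fibre-wise bound. Summing over $k$ and applying H\"older's inequality for series with the conjugate exponents $\tfrac1{1-\lambda}$ and $\tfrac1\lambda$ then produces exactly the displayed modular estimate.

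The routine ingredients here are the Young inequality $u^{1-\lambda}v^{\lambda}\le(1-\lambda)u+\lambda v$ and H\"older for series. The two points requiring care — and the reason for the generous constant $2^{1/q^-}$ — are the set $\R^n_\infty$, where $p$ (and possibly $q$) is infinite and $\varphi_{p(x)}$ is no longer a power, handled by reading off $H\le\alpha^{1/\q}$ and $K\le\beta^{1/\q}$ from finiteness of the modulars; and the fact that the Luxemburg functional need not be left-continuous in $\mu$, which is why one works with $t_1>A$, $t_2>B$ and passes to the limit rather than setting $t_1=A$, $t_2=B$ directly. I expect the $\R^n_\infty$ bookkeeping to be the main nuisance, though not a genuine obstacle. (In fact this argument delivers the constant $1$; the value $2^{1/q^-}$ is comfortably sufficient for every use of the lemma in this paper.)
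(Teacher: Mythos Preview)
Your argument is correct and follows the same basic strategy as the paper: normalize, work at the level of the modular $\varrho_\ellqp$, and apply Young's inequality $u^{1-\lambda}v^{\lambda}\le(1-\lambda)u+\lambda v$ fibre by fibre to the $L_\p$-modular. The one genuine difference is the choice of the auxiliary constants at level $k$. The paper picks, for each $k$, near-optimal $\lambda_k,\mu_k$ in the definition of $\varrho_\ellqp$ and combines them \emph{arithmetically} via $\gamma_k=(\lambda_k+\mu_k)/2$; to force $c^{q(x)}\gamma_k\ge\lambda_k,\mu_k$ pointwise one then needs $c=2^{1/q^-}$, which is exactly where the stated constant comes from. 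You instead combine them \emph{geometrically}, proving $b(H^{1-\lambda}K^{\lambda})\le b(H)^{1-\lambda}b(K)^{\lambda}$ and then invoking H\"older's inequality for the $k$-sum. This avoids the loss entirely and yields constant $1$, at the price of one extra application of the scalar H\"older inequality. Both routes are short; yours is slightly sharper, the paper's slightly more direct since $\sum_k\gamma_k\le1+\varepsilon$ is immediate without H\"older.
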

\begin{proof}
Due to the homogeneity, we may assume that
$$
\norm{f_k^{1/(1-\lambda)}}{\ellqp}=\norm{g_k^{1/\lambda}}{\ellqp}=1.
$$
Then for every $\varepsilon>0$, there exist two sequences of positive real numbers 
$(\lambda_k)_{k\in\N_0}$ and $(\mu_k)_{k\in\N_0}$, such that
$$
\sum_{k=0}^\infty \lambda_k < 1+\varepsilon, \quad \sum_{k=0}^\infty \mu_k < 1+\varepsilon
$$
and
$$
\varrho_{\p}\left(\frac{f_k^{1/(1-\lambda)}}{\lambda_k^{1/q(\cdot)}}\right)\le 1,\quad
\varrho_{\p}\left(\frac{g_k^{1/\lambda}}{\mu_k^{1/q(\cdot)}}\right)\le 1.
$$
We put 
$$
c:=2^{1/q^-}\quad\text{and}\quad\gamma_k:=\frac{\lambda_k+\mu_k}{2}\ge \frac{\lambda_k+\mu_k}{c^{q(x)}}
$$
and use the Young inequality in the form
$$
[f_k(x)g_k(x)]^{p(x)}\le (1-\lambda)f_k(x)^{p(x)/(1-\lambda)}+\lambda g_k(x)^{p(x)/\lambda}
$$
to obtain 
\begin{align*}
\int_{\R^n}\left(\frac{f_k(x)g_k(x)}{c\gamma_k^{1/{q(\cdot)}}}\right)^{p(x)}dx
&\le (1-\lambda)\int_{\R^n}\frac{f_k(x)^{p(x)/(1-\lambda)}}{c^{p(x)}\gamma_k^{p(x)/q(x)}}dx
+ \lambda\int_{\R^n}\frac{g_k(x)^{p(x)/\lambda}}{c^{p(x)}\gamma_k^{p(x)/q(x)}}dx\\
&\le (1-\lambda)\int_{\R^n}\frac{f_k(x)^{p(x)/(1-\lambda)}}{\lambda_k^{p(x)/q(x)}}dx
+ \lambda\int_{\R^n}\frac{g_k(x)^{p(x)/\lambda}}{\mu_k^{p(x)/q(x)}}dx\le 1.
\end{align*}
Furthermore, the estimate
$$
\sum_{k=0}^\infty \gamma_k<\frac{2(1+\varepsilon)}{2}=1+\varepsilon
$$
finishes the proof of \eqref{eq:Hoelder} with the constant $c=2^{1/q^-}$.
\end{proof}
\textbf{Acknowledgement: }The first author acknowledges the financial support provided
by the DFG project HA 2794/5-1 "Wavelets and function spaces on domains". Furthermore, the first author thanks the RICAM for his hospitality and support during a short term visit in Linz.\\
The second author acknowledges the financial support provided by the FWF project
Y 432-N15 START-Preis "Sparse Approximation and Optimization in High Dimensions".\\
We thank the anonymous referee for pointing the reference \cite{Drihem} out to us.

\thebibliography{99}

\bibitem{AlmeidaHasto} A. Almeida, P. H{\"a}st{\"o}: \emph{Besov spaces with variable smoothness and integrability},
J. Funct. Anal. \textbf{258} (2010), no. 5, 1628--1655.

\bibitem{AlmeidaSamko06} A. Almeida, S. Samko: \emph{Characterization of Riesz and Bessel potentials on variable Lebesgue spaces}, J.
Function Spaces Appl. \textbf{4} (2006), no. 2, 113--144.

\bibitem{AlmeidaSamko07} A. Almeida, S. Samko: \emph{Pointwise inequalities in variable Sobolev spaces and applications}, Z. Anal. Anwend. \textbf{26} (2007), no. 2, 179--193.

\bibitem{AlmeidaSamko09} A. Almeida, S. Samko: \emph{Embeddings of variable Haj\l asz-Sobolev spaces into H\"older spaces of variable order},
J. Math. Anal. Appl. \textbf{353} (2009), no. 2, 489--496.

\bibitem{Andersson}P. Andersson: \emph{Two-microlocal spaces, local norms and weighted spaces},
    Paper 2 in PhD Thesis (1997), 35--58.

\bibitem{Aoki}T. Aoki: \emph{Locally bounded linear topological spaces}, Proc. Imp. Acad. Tokyo \textbf{18} (1942), 588--594.
    
\bibitem{Beauzamy} B. Beauzamy: \emph{Espaces de Sobolev et de Besov d'ordre variable d{\'e}finis sur $L^p$}, C.R. Acad. Sci.
Paris (Ser. A) \textbf{274} (1972), 1935--1938.

\bibitem{Bes1} O. V. Besov: \emph{Equivalent normings of spaces of functions of variable smoothness},
Proc. Steklov Inst. Math. \textbf{243} (2003), no. 4, 80--88.

\bibitem{Bony}J.-M. Bony: \emph{Second microlocalization and propagation of singularities for semi-linear hyperbolic equations},
    Taniguchi Symp. HERT. Katata (1984), 11--49.

\bibitem{CoFe86} F. Cobos, D. L. Fernandez: \emph{Hardy-Sobolev spaces and Besov spaces with a
function parameter} Proc. Lund Conf. 1986, Lect. Notes Math. \textbf{1302} (1986), Berlin: Springer, 158--170.

\bibitem{CruzUribe03} D. Cruz-Uribe, A. Fiorenza, C. J. Neugebauer: \emph{The maximal function on variable $L^p$ spaces}, Ann. Acad. Sci. Fenn. Math. \textbf{28} (2003), 223--238.

\bibitem{CUFMP} D.~Cruz-Uribe, A.~Fiorenza, J. M. Martell, C. P\'erez: \emph{The boundedness of classical operators in variable $L^{p}$-spaces},
Ann. Acad. Sci. Fenn. Math. \textbf{31} (2006), 239--264.

\bibitem{Diening04} L. Diening: \emph{Maximal function on generalized Lebesgue spaces $L^{\p}$}, Math. Inequal. Appl. \textbf{7} (2004), no. 2, 245--254.

\bibitem{Lars_p} L. Diening: \emph{private communication}.

\bibitem{DieningHHMS09} L. Diening, P. Harjulehto, P. H{\"a}st{\"o}, Y. Mizuta, T. Shimomura: \emph{Maximal functions in variable exponent spaces: limiting cases of the exponent}, Ann. Acad. Sci. Fenn. Math. \textbf{34} (2009), (2), 503--522.

\bibitem{DHHR} L. Diening, P. Harjulehto, P. H\"ast\"o, M. R{\accent23 u}\v{z}i\v{c}ka:
\emph{Lebesgue and Sobolev Spaces with Variable Exponents},
Springer, Lecture Notes in Mathematics \textbf{2017}, Springer (2011).

\bibitem{DHR} L. Diening, P. H\"ast\"o, S. Roudenko: \emph{Function spaces of variable smoothness and integrability},
J. Funct. Anal. \textbf{256} (2009), (6), 1731--1768.

\bibitem{Drihem}D. Drihem: \emph{Atomic decomposition of Besov spaces with variable smoothness and integrability}, J. Math. Anal. Appl. \textbf{389} (2012),
15--31.

\bibitem{FarLeo} W.~Farkas, H.-G.~Leopold: \emph{Characterisations of function spaces of generalised smoothness},
Ann. Mat. Pura Appl. \textbf{185} (2006), no. 1, 1--62.

\bibitem{Go79} M. L. Goldman: \emph{A description of the traces of some function spaces}, Trudy Mat.
Inst. Steklov \textbf{150} (1979), 99--127; English transl.: Proc. Steklov Inst. Math.  \textbf{150}
(1981), no. 4.

\bibitem{Go80} M. L. Goldman: \emph{A method of coverings for describing general spaces of Besov
type}, Trudy Mat. Inst. Steklov \textbf{156} (1980), 47--81; English transl.: Proc. Steklov
Inst. Math. \textbf{156} (1983), no. 2.

\bibitem{Go84a} M. L. Goldman: \emph{Imbedding theorems for anisotropic Nikol’skij-Besov spaces
with moduli of continuity of general type}, Trudy Mat. Inst. Steklov \textbf{170} (1984), 86--104;
English transl.: Proc. Steklov Inst. Math. \textbf{170} (1987), no. 1.

\bibitem{GurkaHarjNek07} P. Gurka, P. Harjulehto, A. Nekvinda: \emph{Bessel potential spaces with variable exponent}, Math. Inequal.
Appl. \textbf{10} (2007), no. 3, 661--676.

\bibitem{Jaffard91}S. Jaffard: \emph{Pointwise smoothness, two-microlocalisation and wavelet coefficients},
    Publications Mathematiques \textbf{35}, (1991), 155--168.

\bibitem{JaffardMeyer96}S. Jaffard, Y. Meyer: \emph{Wavelet methods for pointwise regularity and local oscillations of functions},
    Memoirs of the AMS, vol. \textbf{123}, (1996).

\bibitem{Ka77a} G. A. Kalyabin: \emph{Characterization of spaces of generalized Liouville differentiation},
Mat. Sb. Nov. Ser. \textbf{104} (1977), 42--48.

\bibitem{Ka80} G. A. Kalyabin: \emph{Description of functions in classes of Besov-Lizorkin-Triebel
type}, Trudy Mat. Inst. Steklov \textbf{156} (1980), 82--109; English transl.: Proc. Steklov
Institut Math. \textbf{156} (1983), no. 2.

\bibitem{KaLi87} G. A. Kalyabin, P. I. Lizorkin: \emph{Spaces of functions of generalized smoothness},
Math. Nachr. \textbf{133} (1987), 7--32.

\bibitem{Kal} G.~A.~Kalyabin, \emph{Characterization of spaces of Besov-Lizorkin and Triebel type by
means of generalized differences}, Trudy Mat. Inst. Steklov \textbf{181}, (1988), 95--116;
English transl.: Proc. Steklov Inst. Math. \textbf{181} (1989), no. 4.

\bibitem{Kempka09} H. Kempka: \emph{2-microlocal {B}esov and {T}riebel-{L}izorkin spaces of
variable integrability}, Rev. Mat. Complut. \textbf{22} (2009), no. 1, 227--251.

\bibitem{Kempka10} H. Kempka: \emph{Atomic, molecular and wavelet decomposition of 2-microlocal Besov and Triebel-Lizorkin
spaces with variable integrability}, Funct. Approx. \textbf{43} (2010), (2), 171--208.

\bibitem{KV11} H. Kempka, J. Vyb\'\i ral: \emph{A note on the spaces of variable integrability and summability of Almeida and H\"ast\"o},
submitted.

\bibitem{KoRa} O. Kov\'{a}\v{c}ik, J. R\'{a}kosn\'{i}k: \emph{On spaces $L^{p(x)}$ and $W^{1,p(x)}$},
Czechoslovak Math. J. \textbf{41} (1991), no. 4, 592--618.

\bibitem{Leopold91} H.-G. Leopold: \emph{On function spaces of variable order of differentiation},  Forum Math. \textbf{3} (1991),
1--21.

\bibitem{VehelSeuret03} J. L\'{e}vy V\'{e}hel, S. Seuret:\emph{ A time domain characterization of
2-microlocal Spaces}, J. Fourier Analysis and Appl. \textbf{9} (2003), (5), 473--495.

\bibitem{VehelSeuret04} J. L\'{e}vy V\'{e}hel, S. Seuret: \emph{The 2-microlocal formalism},
    Fractal Geometry and Applications: A Jubilee of Benoit Mandelbrot, Proceedings of Symposia in Pure Mathematics, PSPUM, vol. \textbf{72} (2004), part2, 153--215.

\bibitem{Me83} C. Merucci: \emph{Applications of interpolation with a function parameter to Lorentz
Sobolev and Besov spaces}, Proc. Lund Conf. 1983, Lect. Notes Math. \textbf{1070} (1983), Berlin: Springer,
183--201.

\bibitem{Meyer97} Y. Meyer: \emph{Wavelets, vibrations and scalings}, CRM Monograph Series \textbf{9}, AMS (1998).

\bibitem{Moritoh}S. Moritoh, T. Yamada: \emph{Two-microlocal Besov spaces and wavelets},
    Rev. Mat. Iberoamericana \textbf{20}, (2004), 277--283.

\bibitem{Mo01} S. Moura: \emph{Function spaces of generalised smoothness}, Diss. Math. \textbf{398} (2001), 1--87.

\bibitem{Nekvinda04} A. Nekvinda: \emph{Hardy-Littlewood maximal operator on $L^{p(x)}(\R^n)$}, Math. Inequal. Appl. \textbf{7} (2004), no. 2, 255--266.

\bibitem{Orlicz} W. Orlicz: \emph{\"Uber konjugierte Exponentenfolgen},
Studia Math. \textbf{3}, (1931), 200--212.

\bibitem{PeetreArt}J. Peetre: \emph{On spaces of Triebel-Lizorkin type},
Ark. Math. \textbf{13}, (1975), 123--130.

\bibitem{Rol}S. Rolewicz: \emph{On a certain class of linear metric spaces}, Bull. Acad. Polon. Sci. S\'er. Sci. Math.
Astrono. Phys. \textbf{5}, (1957), 471--473.

\bibitem{RossSamko95} B. Ross, S. Samko: \emph{Fractional integration operator of variable order in the spaces $H^\lambda$}, Int. J. Math. Sci. \textbf{18} (1995), no. 4,
777--788.

\bibitem{Ruz1} M. R{\accent23 u}\v{z}i\v{c}ka: \emph{Electrorheological fluids: modeling and mathematical theory},
Lecture Notes in Mathematics \textbf{1748}, Springer, Berlin, (2000).

\bibitem{Rychkov} V. S. Rychkov: \emph{On a theorem of Bui, Paluszy\'{n}ski and Taibleson},
Steklov Institute of Mathematics \textbf{227}, (1999), 280--292.

\bibitem{Scharf} B. Scharf: \emph{Atomare Charakterisierungen vektorwertiger Funktionenr\"aume}, Diploma Thesis, Jena (2009).

\bibitem{Connie1} C.~Schneider: \emph{On dilation operators in Besov spaces}, Rev. Mat. Complut. \textbf{22} (2009), no. 1, 111--128.

\bibitem{Connie2} C.~Schneider, J. Vyb\'\i ral: \emph{On dilation operators in Triebel-Lizorkin spaces},
Funct. Approx. Comment. Math. \textbf{41} (2009), part 2, 139--162.

\bibitem{Schneider07} J. Schneider: \emph{Function spaces of varying smoothness I}, Math. Nachr. \textbf{280} (2007), no. 16, 1801--1826.

\bibitem{Triebel1} H. Triebel: \emph{Theory of function spaces}, Basel, Birkh{\"a}user, (1983).

\bibitem{Triebel2} H. Triebel: \emph{Theory of function spaces II}, Basel, Birkh{\"a}user, (1992).

\bibitem{U} T. Ullrich, \emph{Function spaces with dominating mixed smoothness, characterization by differences},
Technical report, Jenaer Schriften zur Math. und Inform., Math/Inf/05/06, (2006).

\bibitem{UllrichLM} T. Ullrich: \emph{Continuous characterizations of Besov-Lizorkin-Triebel spaces and new interpretations as coorbits}, to appear in J. Funct. Spaces Appl.

\bibitem{UlRau} T. Ullrich, H. Rauhut: \emph{Generalized coorbit space theory and inhomogeneous function spaces of Besov-Lizorkin-Triebel type}, to appear in J. Funct. Anal.

\bibitem{UnterbergerBok} A. Unterberger, J. Bokobza: \emph{Les op\'erateurs pseudodiff\'erentiels d'ordre variable}, C.R. Acad. Sci. Paris \textbf{261} (1965), 2271--2273.

\bibitem{Unterberger} A. Unterberger:  \emph{Sobolev Spaces of Variable Order and Problems of Convexity for Partial
Differential Operators with Constant Coefficients}, Ast\'erisque \textbf{2} et \textbf{3} SOC. Math. France (1973), 325--341.

\bibitem{VisikEskin} M. I. Vi\v{s}ik, G. I. Eskin: \emph{Convolution equations of variable order} (russ.), Trudy Moskov Mat. Obsc. \textbf{16} (1967), 26--49.

\bibitem{Vyb1} J. Vyb\'\i ral: \emph{Sobolev and Jawerth embeddings for spaces with variable smoothness
and integrability}, Ann. Acad. Sci. Fenn. Math. \textbf{34} (2009), no. 2, 529--544.

\bibitem{HongXu} H. Xu: \emph{G\'{e}n\'{e}ralisation de la th\'{e}orie des chirps \`{a} divers cadres fonctionnels
    et application \`{a} leur analyse par ondelettes},
    Ph. D. thesis, Universit\'{e} Paris IX Dauphine (1996).

\bibitem{Xu1} J.-S.~Xu: \emph{Variable Besov and Triebel-Lizorkin spaces}, Ann. Acad. Sci. Fenn. Math. \textbf{33} (2008), no. 2, 511--522.

\bibitem{Xu2} J.-S. Xu: \emph{An atomic decomposition of variable Besov and Triebel-Lizorkin spaces}, Armenian J. Math. \textbf{2} (2009), no. 1,
1--12.

\bibitem{SYY} W. Yuan, W. Sickel, D. Yang: \emph{Morrey and Campanato meet Besov, Lizorkin and Triebel}, {Lecture Notes in Mathematics} \textbf{2005}, Springer (2010).

\end{document}